\newcommand{\Z}{{\mathbb Z}}
\newcommand{\N}{{\mathbb N}}
\renewcommand{\P}{{\mathbb P}}
\newcommand{\E}{{\mathbb E}}
\newcommand{\TT}{{\mathbb T}}
\newcommand{\BB}{{\mathbb B}}
\newcommand{\ind}{\mathbbm{1}}
\newcommand{\eps}{\epsilon}
\newcommand{\ex}{\mathfrak{X}}
\newcommand{\Esc}{E}
\newcommand{\rt}{o}
\newcommand{\nx}{\iota}
\newcommand{\trans}{\mathcal{E}}
\newcommand{\down}{\mathbf{0}}
\newcommand{\bb}{\mathbf{b}}
\newcommand{\df}{\textbf}
\newtheorem{thm}{Theorem}
\newtheorem{lemma}[thm]{Lemma}
\newtheorem{prop}[thm]{Proposition}
\newtheorem{cor}[thm]{Corollary}
\newcounter{mycount}
\newenvironment{mylist}{\begin{list}{{\rm (\roman{mycount})}}%
{\usecounter{mycount}\itemsep 0pt}}{\end{list}}
\title{Rotor walks on general trees}
\author[Angel]{Omer Angel}
\address[O.\ Angel]{\sloppypar Department of
Mathematics, University of British Columbia, Vancouver, BC V6T 1Z2, Canada}
\email{angel at math.ubc.ca}
\urladdr{http://www.math.ubc.ca/$\sim$angel}
\author[Holroyd]{Alexander E.\ Holroyd}
\address[A.\ E.\ Holroyd]{\sloppypar Microsoft Research,
1 Microsoft Way, Redmond, WA 98052, USA}
\email{holroyd at microsoft.com}
\urladdr{http://research.microsoft.com/$\sim$holroyd}
\date{24 September 2010}
\keywords{rotor walk, rotor-router, infinite tree, quasi-random, branching process}
\subjclass[2010]{05C05; 05C25; 82C20}
\begin{document}

\begin{abstract}
The rotor walk on a graph is a deterministic analogue of random
walk.  Each vertex is equipped with a rotor, which routes the
walker to the neighbouring vertices in a fixed cyclic order on
successive visits. We consider rotor walk on an infinite rooted
tree, restarted from the root after each escape to infinity. We
prove that the limiting proportion of escapes to infinity
equals the escape probability for random walk, provided only
finitely many rotors send the walker initially towards the
root.  For i.i.d.\ random initial rotor directions on a regular
tree, the limiting proportion of escapes is either zero or the
random walk escape probability, and undergoes a discontinuous
phase transition between the two as the distribution is varied.
In the critical case there are no escapes, but the walker's
maximum distance from the root grows doubly exponentially with
the number of visits to the root. We also prove that there
exist trees of bounded degree for which the proportion of
escapes eventually exceeds the escape probability by
arbitrarily large $o(1)$ functions.  No larger discrepancy is
possible, while for regular trees the discrepancy is at most
logarithmic.
\end{abstract}

\maketitle

\section{Introduction}

The \df{rotor walk} is a derandomized variant of the random
walk on a graph $G$, defined as follows (see also
\cite{hlmppw,h-propp}, for example).  To each vertex of $G$ we
assign a fixed cyclic order of its neighbours, and at each
vertex there is a \df{rotor}, which points to some neighbour. A
\df{particle} is located at some vertex.  The particle location
and rotor positions evolve together in discrete time as
follows.  At each time step, the rotor at the particle's
current location is first incremented to point to the next
neighbour in the cyclic order, and then the particle moves to
this new neighbour.  The rotor walk is obtained by repeatedly
applying this rule.

In many settings, there is remarkable agreement between the
behaviour of rotor walk and expected behaviour of {\em random}
walk; see for example
\cite{cooper-doerr-spencer-tardos-0,cooper-spencer,
doerr-friedrich,h-propp,kleber,levine-peres-3,levine-peres,
levine-peres-2}.  However, the two processes can
also exhibit striking differences.  Landau and Levine
\cite{landau-levine} demonstrated this in the context of
recurrence and transience properties for regular trees (see
also \cite{cooper-doerr-friedrich-spencer}). In this article we
further investigate these issues for general trees.

Suppose that $G$ is an infinite graph with all degrees finite.
Let the particle start at a fixed vertex $\rt$, called the
\df{root}, and consider the rotor walk stopped at the first
return to $\rt$.  Either the particle returns to $\rt$ after a
finite number of steps, or it escapes to infinity without
returning to $\rt$, visiting each vertex only finitely many
times. In either case, the positions of the rotors after the
walk is complete are well defined.  We then start a new
particle at $\rt$ (without changing the rotor positions), and
repeat the above procedure, and so on indefinitely. Let $E_n$
be the number of particles that escape to infinity (as opposed
to returning to $\rt$) after $n$ rotor walks are run from $\rt$
in this way (so $0\leq E_n\leq n$).

Let $\trans=\trans(G)$ denote the probability that a simple
symmetric random walk on $G$ started at $\rt$ never returns to
$\rt$.  The following key result of Schramm, a proof of which is presented
in \cite[Theorem 10]{h-propp}, states that the rotor walk is in
a certain sense no more transient than the random walk.

\begin{thm}[Density bound; Oded Schramm]\label{oded}
For any graph with all degrees finite, any starting vertex
$\rt$, any cyclic orders of neighbours and any initial rotor
positions,
$$\limsup_{n\to\infty} \frac{\Esc_n}{n} \leq \trans.
$$
\end{thm}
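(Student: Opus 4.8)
The plan is to show that the net edge-crossing ``flow'' produced by $n$ rotor-walk excursions from $\rt$ is, on every edge, within a bounded amount of a gradient flow, and then to play this off against a truncated random-walk harmonic function via the discrete Green identity.

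Fix $n$ and run $n$ excursions from $\rt$. For each ordered pair of neighbours $(x,y)$ let $C_n(x\to y)$ be the number of traversals of the edge from $x$ to $y$, and set $v_n(x)=\sum_{y\sim x}C_n(x\to y)$, the number of departures from $x$. Since every vertex is visited only finitely often, each visit to a vertex $x\neq\rt$ (even during an escaping excursion) is an arrival followed by a departure, so $\sum_{y\sim x}C_n(y\to x)=v_n(x)$; at the root one has instead $v_n(\rt)=n$ and $\sum_{y\sim\rt}C_n(y\to\rt)=n-\Esc_n$. The round-robin rule forces $|C_n(x\to y)-v_n(x)/\deg x|\le 1$ on every edge. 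Introduce the antisymmetric flow $\theta_n(x\to y):=C_n(x\to y)-C_n(y\to x)$ and the potential $\phi_n(x):=v_n(x)/\deg x$. The conservation relations say that $\theta_n$ has divergence $\Esc_n$ at $\rt$ and $0$ at every other vertex, while the round-robin bound gives
$$\bigl|\theta_n(x\to y)-(\phi_n(x)-\phi_n(y))\bigr|\le 2\qquad\text{on every edge.}$$
Writing $\Delta$ for the (unnormalised) graph Laplacian, this says $\Delta\phi_n(x)=O(\deg x)$ for $x\neq\rt$ while $\Delta\phi_n(\rt)=-\Esc_n+O(\deg\rt)$: the potential $\phi_n$ is harmonic off $\rt$ up to a bounded error at each vertex, and its Laplacian at $\rt$ sees the escapes.

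Since $G$ is infinite one cannot take $\phi_n$ to be genuinely harmonic, so we truncate. Fix an exhaustion $G_1\subseteq G_2\subseteq\cdots$ of $G$ by finite connected subgraphs containing $\rt$, and let $h_k(x)$ be the probability that simple random walk from $x$ reaches $\rt$ before leaving $G_k$, with $h_k(x)=0$ for $x\notin G_k$. Then $h_k$ is harmonic off $\rt$ inside $G_k$, satisfies $h_k(\rt)=1$ and $0\le h_k\le 1$, and $\tfrac{1}{\deg\rt}\sum_{y\sim\rt}h_k(y)=1-\trans_k$ where $\trans_k:=\P_\rt(\text{walk leaves }G_k\text{ before returning to }\rt)$; in particular $\Delta h_k(\rt)=-\trans_k\deg\rt$. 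As $k\to\infty$ the event $\{\text{leave }G_k\text{ before returning}\}$ decreases to $\{\text{never return}\}$, so $\trans_k\downarrow\trans$.

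Now apply the discrete Green identity on the finite set $G_k$ to the pair $\phi_n,h_k$: using $h_k\equiv 0$ off $G_k$, the boundary term collapses to a sum of $-h_k(x)\phi_n(y)$ over edges $\{x,y\}$ with $x\in G_k$, $y\notin G_k$, which is $\le 0$ because $h_k,\phi_n\ge 0$. On the interior side, $\Delta h_k$ vanishes on $G_k\setminus\{\rt\}$, so $\sum_{x\in G_k}\phi_n(x)\Delta h_k(x)=\phi_n(\rt)\Delta h_k(\rt)=-n\trans_k$; and from $0\le h_k\le 1$ and the Laplacian bounds above, $\sum_{x\in G_k}h_k(x)\Delta\phi_n(x)\le -\Esc_n+C_k$ for a constant $C_k$ of order $\sum_{x\in G_k}\deg x$, depending only on $G_k$. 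The Green identity therefore yields $-n\trans_k\le \sum_{x\in G_k}h_k(x)\Delta\phi_n(x)\le -\Esc_n+C_k$, hence $\Esc_n\le n\trans_k+C_k$. Dividing by $n$, letting $n\to\infty$ and then $k\to\infty$ gives $\limsup_n \Esc_n/n\le\trans$. The crux of the argument is the discrepancy bound $|\theta_n(x\to y)-(\phi_n(x)-\phi_n(y))|\le 2$: a priori the rotor flow need not be a gradient flow at all, but the round-robin discipline pins it to within a constant of one, edge by edge, and this is exactly what the Green identity needs. The only remaining technical care is to enlarge the window $G_k$ strictly after sending $n\to\infty$, so that the error constant $C_k$ never depends on $n$.
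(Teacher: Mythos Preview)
The paper does not contain its own proof of this theorem; it attributes the result to Oded Schramm and refers to \cite[Theorem~10]{h-propp} for a written proof. Your argument is correct and is essentially Schramm's argument as recorded there: the round-robin bound $|C_n(x\to y)-v_n(x)/\deg x|\le 1$, the resulting near-gradient form of the antisymmetric flow $\theta_n$, testing against the truncated harmonic function $h_k$ via the discrete Green identity on an exhaustion $G_k$, and sending $n\to\infty$ before $k\to\infty$ are exactly the ingredients of that proof. One cosmetic point: the sign you attach to the boundary term depends on the order in which $\phi_n$ and $h_k$ appear in Green's identity, but the inequality $-n\trans_k\le\sum_{x\in G_k}h_k(x)\Delta\phi_n(x)$ that you then extract is the correct one in either convention, and the conclusion $\Esc_n\le n\trans_k+C_k$ with $C_k$ independent of $n$ follows.
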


The above result suggests the following questions.  When does
$E_n/n$ have a limit, and when is it equal to $\trans$?  When
is it strictly smaller? In the former case, how large can the
difference $E_n - \trans n$ be, as a function of $n$? The
answers in general depend both on the graph $G$ and the initial
positions of the rotors.  We will provide answers when the
graph is a tree.

Let $G=T$ be an infinite tree with vertex set $V$ and all
degrees finite. For notational convenience, we take $T$ to
denote the abstract tree {\em together with the cyclic orders}
of neighbours.  We will also always assume that the root $\rt$
has degree $1$ (if this condition is dropped, it is
straightforward to reformulate all our results appropriately).
For a vertex $v\neq \rt$, let $v^{(0)}$ be its \df{parent}
(i.e.\ its neighbour on the unique path to the root), and let
$v^{(1)},\ldots,v^{(b)}$ be its $b=b(v)$ \df{children} (i.e.\
its remaining neighbours), labeled so that the rotor at $v$
points to the neighbours in the cyclic order
$v^{(0)},v^{(1)},\ldots,v^{(b)}$.  (However, we will allow it
to start at an arbitrary point in this order.)  Write
$V_\rt:=V\setminus\{\rt\}$ and $\N=\{0,1,2\ldots\}$. A
\df{rotor configuration} is a map $r:V_\rt\to\N$, with $0\leq
r(v)\leq b(v)$ for all $v$, indicating that the rotor at vertex
$v$ points to $v^{(r(v))}$ (the rotor at $\rt$ always points to
its only child). Our principal object of study is the escape
number $E_n=E_n(T,r)$, defined to be the number of particles
that escape to infinity after $n$ successive rotor walks are
run from $\rt$ (as discussed above), with initial rotor
configuration $r$. See Figure \ref{example}.
\begin{figure}
\centering
\resizebox{3.6in}{!}{\includegraphics{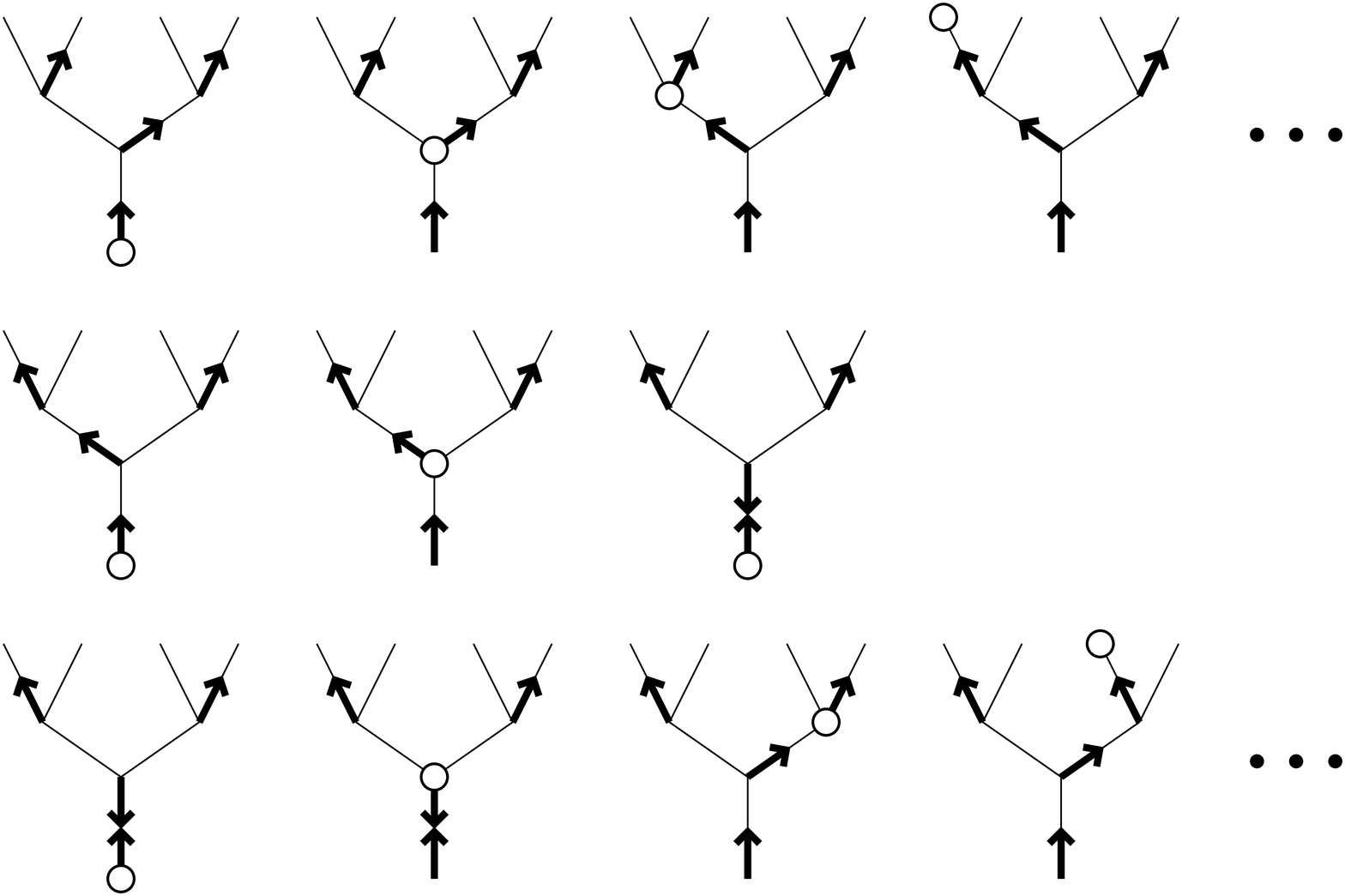}}
\caption{The rotor walk on an infinite binary tree (shown up to level 3).
The rotors (thick arrows) turn anticlockwise with positions $0$ (down),
$1$ (right), $2$ (left), and all non-root rotors have initial position $1$.
The first particle (white disc) escapes to infinity, the second returns
to the root, and the third escapes, so $E_3=2$.
} \label{example}
\end{figure}

Let $\down$ denote the rotor configuration in which each rotor points towards
the root, and let $\bb$ denote the rotor configuration in which each rotor
will {\em next} point towards the root, i.e.\ let $\down(v)=0$ and
$\bb(v)=b(v)$ for all $v\in V_\rt$.

\begin{thm}[Monotonicity, extremal configurations]\label{multi}
Let $T$ be a tree.
\begin{mylist}
\item If $r,s$ are rotor configurations satisfying $r(v)\leq
    s(v)$ for all $v\in V_\rt$, then
    $ \Esc_n(T,r)\geq \Esc_n(T,s)$ for all $n\geq 0.$
\item 
    We have $\Esc_n(T,\down)\geq \trans n$ for all $n\geq 0$.
\item 
    We have $\Esc_n(T,\bb)=0$ for all $n\geq 0$.
\end{mylist}
\end{thm}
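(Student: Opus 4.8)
I would prove the three parts separately: parts (iii) and (ii) by direct arguments about the net flow of the walk across the edges, and part (i) by a coupling after reducing to a single rotor change; the coupling in (i) is the one genuinely delicate point.

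\emph{Part (iii).} Write $u$ for the unique child of $\rt$, and run the whole launching process from $\bb$. Since the rotor at a vertex $v$ starts at position $b(v)$, it dispatches the particle to the neighbours in the repeating order $v^{(0)},v^{(1)},\dots,v^{(b(v))},v^{(0)},\dots$, with the parent first in each block; hence at every moment, and for every child $c=v^{(i)}$ of $v$, the number of moves $v\to c$ made so far is at most the number of moves $v\to v^{(0)}$. Now suppose some launch escapes. Visiting infinitely many vertices, each finitely often, its particle converges to an end along a ray $\rt=w_0,w_1,w_2,\dots$, and over the whole process the number $D_j$ of moves $w_{j-1}\to w_j$ strictly exceeds the number $U_j$ of moves $w_j\to w_{j-1}$, since the escaping particle ends up permanently inside $T_{w_j}$. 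Taking $v=w_j$, $c=w_{j+1}$ in the inequality above gives $D_{j+1}\le U_j$, so $U_{j+1}<D_{j+1}\le U_j$ and hence $U_{j+1}\le U_j-1$ for every $j\ge1$. But $U_1$ is at most the number of visits to $u$, which is finite — a contradiction. Therefore $\Esc_n(T,\bb)=0$.

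\emph{Part (ii).} Run $n$ launches from $\down$; since each launch visits every vertex finitely often, the set $W\ni\rt$ of visited vertices is finite. For $v\in V_\rt$ let $D(v),U(v)$ be the numbers of moves along $\{v^{(0)},v\}$ directed away from and towards $\rt$, and put $\phi(v):=D(v)-U(v)\ge0$; conservation of the walk gives $\phi(v)=\sum_i\phi(v^{(i)})$, while $D(u)=n$ and $\phi(u)=\Esc_n$. Because the rotor at $v$ starts pointing at the parent, within each block of $b(v)+1$ visits every child is served before the parent, so $D(v^{(i)})\ge U(v)$ for each child (trivially so when $v^{(i)}$ is unvisited). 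Let $\trans_v$ be the probability that simple random walk from $v$ never visits $v^{(0)}$, so $\trans=\trans_u$ and, by first-step analysis, $\trans_v=S_v/(1+S_v)$ with $S_v=\sum_i\trans_{v^{(i)}}$. I would then prove $\phi(v)\ge\trans_v D(v)$ for all $v$ by induction on the height of $W\cap T_v$, the statement being vacuous for unvisited $v$: a visited $v$ with no visited child must be a leaf of $T$ (in $\down$ the first visit to $v$ sends the particle to $v^{(1)}$), so $\trans_v=\phi(v)=0$; and for a visited $v$ with visited children, using $D(v^{(i)})\ge U(v)=D(v)-\phi(v)\ge0$ we get $\phi(v)=\sum_i\phi(v^{(i)})\ge\sum_i\trans_{v^{(i)}}D(v^{(i)})\ge S_v(D(v)-\phi(v))$, which rearranges (via $\trans_v=S_v/(1+S_v)$) to $\phi(v)\ge\trans_v D(v)$. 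Taking $v=u$ gives $\Esc_n(T,\down)\ge\trans n$.

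\emph{Part (i).} A finite-window argument reduces the general inequality to configurations differing at finitely many vertices, and then, changing one rotor at a time through legal intermediate configurations, to the case $s(v_0)=r(v_0)+1$ at a single vertex $v_0$; so it suffices to show that lowering one rotor by one never decreases $\Esc_n$. I would couple the $n$-launch process for $s$ with that for $s'$ (equal to $s$ but with $s(v_0)$ lowered to $s(v_0)-1$): the two runs agree until the particle first reaches $v_0$, after which the rotor at $v_0$ in the $s'$-run lags one step behind, so the $s'$-particle inserts an extra excursion into the child $c_0=v_0^{(s(v_0))}$ and thereafter imitates the $s$-particle's departures from $v_0$. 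If that excursion escapes, the $s'$-process gains an escape outright; the hard case is when it returns to $v_0$, producing no immediate escape but advancing the rotors inside $T_{c_0}$, so that the gain is only realised later, as an escape through $T_{c_0}$ that now happens sooner. Making this amortised accounting rigorous — keeping the coupling well defined over the remaining launches by tracking exactly how the $s'$-configuration differs from the $s$-configuration inside $T_{c_0}$, and thereby excluding an off-by-one loss — is where I expect the real difficulty to lie.
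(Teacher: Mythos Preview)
Your proof of (iii) is correct; it is a flow-along-the-escape-ray variant of the paper's argument (which instead isolates the first vertex to receive the escaping particle twice from its parent and derives a contradiction there).

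For (ii), the inequality $\phi(v)\ge\trans_v D(v)$ that you prove is exactly the content of the paper's discrepancy computation, but your induction is broken: the claim that the visited set $W$ is finite is false. From $\down$, the first particle moves to the first child at every vertex it reaches, so (once vertices with only finitely many descendants are pruned, which is without loss) it escapes along an infinite ray, and $W$ is infinite; your induction on the height of $W\cap T_v$ therefore never terminates. The paper repairs exactly this by proving the inequality on the finite truncation $T^h$ (with the level-$h$ vertices as sinks), where the induction is genuinely finite, and then letting $h\to\infty$ via a truncation lemma $E_n(T^h,S^h,r^h)\searrow E_n(T,r)$ together with $\trans(T^h,S^h)\searrow\trans(T)$. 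You need some such device.

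For (i) you do not complete the proof: you set up a one-rotor coupling and correctly flag the amortisation inside $T_{c_0}$ as the hard step, but stop there. The paper takes a different route that sidesteps the coupling entirely. It establishes an \emph{explosion formula}
\[
e(T,r)=\ex\Bigl(\textstyle\sum_{i\le r(\nx)}\theta\,e(T_i,r_i)+\sum_{i>r(\nx)}e(T_i,r_i)\Bigr),
\]
expressing the escape sequence of $T$ in terms of those of its principal branches via explicit shift and explosion operators on sequences. The right-hand side is manifestly decreasing in $r(\nx)$ and increasing (in the majorization order) in each $e(T_i,r_i)$, so iterating down a finite truncated tree and then passing to the limit gives (i) at once. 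The same formula also yields the sharp Lipschitz bound $|E_n(T,r)-E_n(T,r')|\le\sum_v|r(v)-r'(v)|$, which is precisely what your coupling would deliver if carried through.
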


\sloppypar It follows immediately from Theorem \ref{oded} and
Theorem \ref{multi}(ii) that $\lim_{n\to\infty} E_n(T,\down)/n
= \trans$. We establish the following much stronger result.

\begin{thm}[Limiting density]
\label{b-1} For any tree $T$, if $r$ is any rotor configuration having only
finitely many vertices $v$ with $r(v)=b(v)$, then
$$\lim_{n\to\infty} \frac{\Esc_n(T,r)}{n} = \trans.$$
\end{thm}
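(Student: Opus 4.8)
Throughout, $u$ is the unique child of $\rt$. The bound $\limsup_n\Esc_n(T,r)/n\le\trans$ is Theorem~\ref{oded}; the content is the reverse inequality $\liminf_n\Esc_n(T,r)/n\ge\trans$.

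\emph{Flow decomposition.} As $r$ has finitely many vertices with $r(v)=b(v)$ and every leaf of $T$ is one of these, fix a level $N$ above all of them; then $T$ has no leaf at level $\ge N$. For $v\in V_\rt$ let $x_v(n)$ count downward crossings of $\{v^{(0)},v\}$ in the first $n$ excursions, and let $f_v(n)$ be $x_v(n)$ minus the number of upward crossings of that edge, i.e.\ the number of particles that enter the subtree below $v$ and never leave it. Each excursion visits each vertex finitely often, so these are finite. Since the net in-flow at each $v\ne\rt$ over completed excursions is zero, $f_v(n)=\sum_{i=1}^{b(v)}f_{v^{(i)}}(n)$; and since the rotor at $v$ turns once per visit, the counts of crossings along the $b(v)+1$ edges at $v$ are pairwise within $1$, so $x_{v^{(i)}}(n)=x_v(n)-f_v(n)+O(1)$. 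Using $\Esc_n=f_u(n)$ and $x_u(n)=n$ and iterating down to level $N$,
\begin{equation*}
\Esc_n=\sum_{|w|=N}f_w(n),\qquad x_w(n)=n\!\!\prod_{\rt\prec v\prec w}\!\!\Bigl(1-\frac{f_v(n)}{x_v(n)}\Bigr)+O_N(1),
\end{equation*}
the product over the strict ancestors $v\ne\rt$ of $w$, and $O_N(1)$ bounded in $n$ (only finitely many vertices lie within distance $N$ of $\rt$).

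\emph{The two inputs.} Put $y_v(n)=f_v(n)/x_v(n)\in[0,1]$, so that $\Esc_n/n=\sum_{|w|=N}y_w(n)\prod_{\rt\prec v\prec w}(1-y_v(n))+o(1)$, an expression nondecreasing in each $y_w$ ($|w|=N$) and nonincreasing in each $y_v$ ($|v|<N$). Let $T_v^{+}$ be the subtree below $v$ with $v^{(0)}$ adjoined as a degree-$1$ root, and set $\beta_v:=\trans(T_v^{+})$; a standard computation gives $\beta_v=S_v/(1+S_v)$ with $S_v=\sum_i\beta_{v^{(i)}}\le b(v)<\infty$, whence $\beta_v<1$, and therefore (from the recursion above) $x_v(n)\to\infty$ for all $|v|\le N$. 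The visits to the subtree below $v$ are exactly the excursions of the process on $T_v^{+}$ started from the restriction of $r$, so $f_v(n)=\Esc_{x_v(n)}(T_v^{+},\cdot)$, and Theorem~\ref{oded} applied to $T_v^{+}$ gives $\limsup_n y_v(n)\le\beta_v$. One also checks that $\beta_v$ satisfies the recursion for the random-walk quantity $\P_v(\text{escape before }v^{(0)})$, so $\sum_{|w|=N}\beta_w\prod_{\rt\prec v\prec w}(1-\beta_v)=\beta_u=\trans$ for every $N$. Finally -- and here the hypothesis is used -- for $|w|=N$ the tree $T_w^{+}$ is leafless and carries a configuration with no vertex in the top position, and assuming the key claim below we get $\liminf_n y_w(n)\ge\beta_w$. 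Feeding the bounds $\limsup_n y_v(n)\le\beta_v$ ($|v|<N$) and $\liminf_n y_w(n)\ge\beta_w$ ($|w|=N$) into the monotone expression above gives $\liminf_n\Esc_n/n\ge\sum_{|w|=N}\beta_w\prod_{\rt\prec v\prec w}(1-\beta_v)=\trans$.

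\emph{The main obstacle.} The key claim, and the heart of the matter, is: $\liminf_m\Esc_m(T',\cdot)/m\ge\trans(T')$ for every leafless tree $T'$ and every rotor configuration on it with no vertex in the top position. Monotonicity (Theorem~\ref{multi}(i)) gives only the reverse inequality, but it does reduce the claim to the single maximal such configuration $\bb-1$, where $(\bb-1)(v)=b(v)-1$. For $\bb-1$ one couples the excursion process with that of $\down$, for which Theorems~\ref{multi}(ii) and~\ref{oded} give $\Esc_m(T',\down)/m\to\trans(T')$: at every vertex $v$ the sequences of rotor targets under $\bb-1$ and under $\down$ agree after an initial block of length at most $b(v)+1$, so the two walks can be matched excursion by excursion outside a bounded transient at each vertex, and the induced discrepancy in the escape count over the first $m$ excursions should be $o(m)$ (one anticipates $O(\log m)$ for bounded degrees, in line with the sharpness results stated later in the paper). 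Carrying this out -- in particular, controlling how the per-vertex transients compound along the walk -- is the real work; an alternative is to rerun the monovariant underlying Theorem~\ref{multi}(ii) from the initial configuration $\bb-1$ rather than $\down$. All the remaining steps are routine bookkeeping with the $O_N(1)$ terms, harmless since $N$ depends only on $r$.
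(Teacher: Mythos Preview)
Your reduction and flow decomposition are fine, but the paragraph headed ``The main obstacle'' is where the theorem actually lives, and you have not proved it. After Proposition~\ref{lip} reduces the general statement to $b$-free configurations, your ``key claim'' \emph{is} the theorem; the level-$N$ decomposition merely reproduces that reduction by different bookkeeping and buys nothing toward the analytic content.

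Neither sketch you offer for the key claim goes through. For the coupling: $\down$ and $\bb-1$ differ at every vertex, and while Proposition~\ref{lip} bounds $|E_n(T,r)-E_n(T,r')|$ by $\sum_v|r(v)-r'(v)|$, that sum is infinite here; nothing in your outline prevents the per-vertex transients from accumulating to order $n$ as the depth explored grows. For the monovariant alternative: the proof of Theorem~\ref{multi}(ii) hinges on the inequality $n_i\ge m$ at each vertex (the rotor sends the particle to every child before the parent), and that inequality is specific to $\down$; starting from $\bb-1$, the rotor sends the particle to child $b$ and then immediately to the parent, so $n_1,\dots,n_{b-1}$ may fall below $m$ and the sign in Lemma~\ref{delta} is lost.

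The paper takes a different route. A short induction on $n$ (Lemma~\ref{half}) gives the crude but \emph{uniform} bound $E_n(T,r)\ge\trans(T)\,n/2$ for every $b$-free $r$ and every tree. Then, writing $\ell_v=\liminf_n E_n(T_v,r_v)/n$ and $f(v)=\prod_{x\in(\rt,v]}(1-\ell_x)$, the liminf recursion (Lemma~\ref{liminf}, which you essentially rederived) makes $f$ sub-harmonic off the root, while the analogous product $h$ built from the random-walk escape probabilities is harmonic. The half-bound forces $f(\eta)=0$ on every end $\eta$ with $h(\eta)=0$; since $h$ vanishes a.e.\ for harmonic measure on $\partial T$, optional stopping for the bounded sub-martingale $f(X_t)$ along simple random walk gives $f(\nx)\le h(\nx)$, i.e.\ $\ell_\nx\ge\trans$. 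The missing ingredient in your argument is precisely this combination: a weak lower bound holding uniformly over all subtrees, plus a potential-theoretic comparison that bootstraps it to the sharp constant.
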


\sloppypar In the case of the binary tree $\TT_2$, Landau and
Levine \cite[Theorem~1.2]{landau-levine} characterized all
possible sequences $(E_n(\TT_2,r))_{n\geq 0}$ that arise as the
configuration $r$ varies.  Their result implies in particular
that for any $\alpha,\beta$ satisfying
$0\leq\alpha\leq\beta\leq\trans(=\tfrac12)$ there exists a
configuration $r$ such that $\liminf_{n\to\infty}
E_n(\TT_2,r)/{n} = \alpha$ and  $\limsup_{n\to\infty}
{E_n(\TT_2,r)}/{n} = \beta.$

Theorem \ref{oded} leaves open the possibility that the number
of escapes $E_n$ exceeds the expected number $\trans n$ for
random walk by an arbitrarily large $o(n)$ function (but no
more than this), and our next result shows that this is indeed
possible, even for a tree of bounded degree.
\begin{thm}[Large discrepancy]\label{T:many_escape}
  For any function $f$ that satisfies $f(n)=o(n)$ as $n\to\infty$,
  there exists a tree $T$ of maximum degree $3$ such that
  \[
  E_n(T,\down) \geq \trans n + f(n)
  \qquad \text{for all $n$ sufficiently large.}
  \]
\end{thm}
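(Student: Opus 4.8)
Write $g(n):=\Esc_n(T,\down)-\trans n$. By Theorem~\ref{multi}(ii) we always have $g\ge 0$, and by Theorem~\ref{oded} we always have $g(n)=o(n)$; so the task is, given $f=o(n)$, to build a tree of maximum degree $3$ on which $g(n)\ge f(n)$ for all large $n$ — in particular $g(n)\to\infty$. A regular tree cannot do this: on $\TT_d$ the surplus returns to a bounded value each time the rotor front finishes ``resetting'' a level, so $g(n)=O(1)$ along a subsequence, and $g(n)\ge f(n)$ then fails whenever $f\to\infty$ (by contrast $\limsup_n g(n)$ is only logarithmic on $\TT_d$). So the tree must be irregular, engineered so that the surplus never falls back.

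I would use a \emph{stretched binary tree}: degree-$3$ branch vertices at a sparse increasing sequence of levels $0=\ell_0<\ell_1<\cdots$, with a path of $L_k:=\ell_{k+1}-\ell_k$ degree-$2$ vertices inserted between each level-$\ell_k$ branch vertex and each of its two level-$\ell_{k+1}$ children; the growing integers $L_k$ are fixed in terms of $f$ at the end. Two local facts drive the analysis. First, a path of $L$ degree-$2$ vertices, fed particles from above from the all-up ($\down$) state, acts as a periodic \emph{$1$-in-$(L+1)$ filter}: by an easy induction the $m$-th particle reaches the bottom exactly when $m\equiv 1\pmod{L+1}$, the rest being reflected, and after every $L+1$ particles the rotors on the path are again all up, so the filter renews. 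Second, a branch vertex deals incoming particles alternately into its two subtrees, interleaved with its own upward returns, in the round-robin order forced by its rotor. Composing these two facts yields a recursion for $\Esc_n$ of the subtree below a branch vertex in terms of the same quantities for the two subtrees one level deeper, evaluated at an explicit argument $\sim n-\Esc_n$; and — the feature that makes iteration possible, already visible for $\TT_2$ — once the rotor front has passed level $\ell_k$, every rotor above it is back in the $\down$ configuration.

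The random walk escape probability $\trans=\trans(T)$ obeys the matching series/parallel electrical recursion, so comparing the two recursions (and using Theorem~\ref{multi}(i) to sandwich once particles begin returning to a path from below, which breaks exact periodicity) shows that over the $k$-th \emph{super-phase} — the block of particle indices during which the front advances from level $\ell_k$ to level $\ell_{k+1}$ — escapes appear at amortized rate $\trans$, the instantaneous picture being a burst of escapes followed by a long spell in which the $k$-th batch of paths reflects most incoming particles. The crux is to choose the $L_k$ so that these super-phases are strongly \emph{desynchronized} across levels: while level $k$ is in its reflecting spell, the deeper levels are still in their escape bursts, so the escape rate never stalls and $g$ only climbs, by order $L_k$ per super-phase. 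It then suffices to take the branch levels sparse enough that $\sum_\ell 1/n_\ell<\infty$, where $n_\ell$ is the number of level-$\ell$ vertices, making $T$ transient with $\trans>0$, and to take $L_k\to\infty$ fast enough that the cumulative surplus $\asymp\sum_{j<k}L_j$ overtakes $f$ by the start of super-phase $k+1$; since the super-phase boundaries can be made to grow at least geometrically while $f(n)=o(n)$, there is ample room for this.

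The main obstacle is precisely the ``never falls back'' property: one must verify that the escapes banked in early super-phases are not given back during the long reflecting spells of later ones, when no new escapes occur at those levels and $\trans n$ keeps rising. This is where the desynchronization has to be made quantitative — showing, via the $\Esc_n$ recursion together with the monotonicity of Theorem~\ref{multi}(i), that $g$ stays above its most recent super-phase value throughout the next super-phase, not merely at the endpoints. The remaining ingredients are routine, in particular the standard fact that $\Esc_n(T,\down)$ for fixed $n$ depends on only finitely much of $T$ and finitely many rotor moves, so that the tree can be specified, and the claimed inequality checked, one super-phase at a time.
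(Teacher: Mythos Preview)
Your approach is genuinely different from the paper's, and the gap you yourself identify is fatal as the proposal stands.

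The paper does \emph{not} build a transient tree and track $E_n-\trans n$; it builds a \emph{recurrent} tree, so $\trans=0$ and the target inequality is simply $E_n\ge f(n)$. The building blocks are $d$-brushes $\BB^d$ (a path with a $(d-1)$-brush hanging off each vertex), for which an explicit explosion-formula recursion gives $E_n(\BB^d,\down)\sim c_d\,n^{1-2^{1-d}}$. Attaching enough brushes in parallel yields, for each $\eps>0$, a recurrent max-degree-$3$ tree with $E_n>n^{1-\eps}/2$ for all $n$ (Corollary~\ref{hydra}). The final tree is a path with such trees of decreasing $\eps_k$ hanging off successive vertices; Proposition~\ref{subgraph} (subtree monotonicity for $\down$) then gives $E_n\ge\tfrac12(n/k)^{1-\eps_k}$ for every $k$, and a diagonal choice of $\eps_k$ beats any prescribed $f(n)=o(n)$. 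No oscillation analysis is needed because $\trans n\equiv 0$.

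Your stretched-binary-tree route would have to control the oscillation of $g(n)=E_n-\trans n$, and this is exactly where the proposal stops. Two concrete problems. First, your ``desynchronization'' heuristic is suspect: the stretched binary tree is spherically symmetric, so all $2^k$ level-$k$ paths are in the \emph{same} phase of their $(L_k+1)$-cycle (up to a shift of $1$ from the branch-vertex alternation); there is no desynchronization within a level. Across levels the picture is the opposite of what you describe: when level $k$ is in its reflecting spell, few particles reach level $k+1$, so deeper levels cannot be ``in their escape bursts''. Second, the explosion formula actually gives a clean substitution for this tree --- $e^{(k)}$ is obtained from $e^{(k+1)}$ by $1\mapsto 11\,0^{L_k+1}$, $0\mapsto 0$ --- and from this one sees that runs of zeros in $e^{(0)}$ have length $\sum_{j\le k}(L_j+1)$ infinitely often, during which $g$ drops by $\trans\sum_{j\le k}(L_j+1)$. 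One can check that $g$ does grow along block boundaries (indeed $g^{(0)}(n_0)=(\trans/\trans^{(K)})\,g^{(K)}(n_K)$ and $\trans^{(K)}\to 0$ when $L_k\to\infty$), but showing that the drops never bring $g$ below $f$ requires a quantitative comparison you have not supplied. Your final paragraph concedes this is ``the main obstacle'' and asserts it can be handled ``via the $E_n$ recursion together with monotonicity'', but no argument is given; without it the proof is incomplete.
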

In fact we will prove Theorem \ref{T:many_escape} by exhibiting a recurrent
tree with the required property (so $\trans=0$).  It is easy to obtain an
example with any given $\trans\in(0,1)$ by adjoining a suitable transient
tree.

We next specialize to regular trees.  For $b\geq 2$, let
$\TT_b$ be the \df{$b$-ary} tree, in which every vertex $v\in
V_\rt$ has exactly $b$ children.  In contrast with
Theorem~\ref{T:many_escape}, in this case the number of escapes
differs from its expected value by at most a logarithmic term.

\begin{thm}[Regular trees]\label{log}
For the $b$-ary tree $\TT_b$, let $\mathbf{k}$ be the constant
configuration given by $\mathbf{k}(v)=k$ for all $v\in V_\rt$.
For $b\geq 2$ and $0\leq k< b$, the normalized discrepancy
$$\Delta_n:=\frac{E_n(\TT_b,\mathbf{k})-\trans n}{\log_b n}$$
satisfies
$$\liminf_{n\to\infty} \Delta_n=
\begin{cases}
0, & k=0;\\
- \frac{k-1}{b}, & k>0;
\end{cases}
\qquad
\limsup_{n\to\infty} \Delta_n=\frac{b-k-1}{b}.$$
\end{thm}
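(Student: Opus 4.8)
The plan is to exploit the self-similarity of $\TT_b$. The root $\rt$ has a single child $v$, to which it routes every particle, so $E_n(\TT_b,\mathbf k)$ is exactly the number among the first $n$ particles entering the subtree $T_v$ (carrying rotor configuration $\mathbf k$) that escape to infinity within $T_v$; and the $b$ principal subtrees below $v$ are isomorphic copies of $(T_v,\mathbf k)$. I would first extract from this the governing recursion. As $n$ particles are processed in $T_v$, the rotor at $v$ makes $D_n$ turns and cycles its moves through the $b$ children and the parent, so that --- writing $R_n:=n-E_n$ for the number of returns and $p_n\in\{0,\dots,b\}$ for the rotor position at $v$ after $n$ particles --- the $i$-th principal subtree receives $R_n+\delta_i$ particles, where $\delta_i\in\{-1,0,1\}$ is determined by $p_n$ and $k$, with $\sum_{i=1}^b\delta_i=p_n-k$. (The identity ``$p_n=0$ iff particle $n$ returned'' anchors the bookkeeping of the rotor position, which one tracks recursively alongside.) Writing $E$ also for the escape function of $(T_v,\mathbf k)$, and noting that the very first particle always escapes (so $E_1=1$, $R_1=0$), this yields
\[
E_n=\sum_{i=1}^b E\bigl(R_n+\delta_i\bigr).
\]

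Next I would pass to the discrepancy. Since $\trans=\trans(\TT_b)=\tfrac{b-1}{b}$ and $1+b\trans=b$, substituting $E_m=\trans m+\psi(m)$ and $R_n=n/b-\psi(n)$ into the recursion and simplifying gives the clean forms
\[
\psi(n)=\frac1b\sum_{i=1}^b\bigl[\psi(R_n+\delta_i)+\trans\,\delta_i\bigr],
\qquad\text{equivalently}\qquad
b\,\psi(n)=\sum_{i=1}^b\psi(R_n+\delta_i)+\trans\,(p_n-k).
\]
A routine induction (using Theorems~\ref{oded} and~\ref{multi}) shows $\psi(n)=O(\log n)$. Iterating the displayed recursion $\ell=\log_b n-\omega(1)$ levels, down to subtrees receiving $n^{o(1)}$ particles, then produces
\[
\psi(n)=o(\log n)+\trans\sum_{l=0}^{\ell-1}\frac{1}{b^{l+1}}\sum_{u\ \text{at level}\ l}\bigl(p_{n_u}-k\bigr),
\]
where $n_u$ is the particle count of the level-$l$ subtree $u$. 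Thus $\Delta_n$ is, up to $o(1)$, a normalized weighted average of the quantities $p_{n_u}-k\in\{-k,\dots,b-k\}$ --- of how far the rotors stand from position $k$ throughout the recursion.

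The heart of the proof is to determine the extreme values of this average. For the constructions I would exhibit explicit subsequences $n_j\to\infty$ with $n_j\asymp b^{\,j}$ along which the whole rotor pattern below $n_j$ is eventually periodic in the level, forcing the summands above to take a fixed average layer after layer. When $k=0$ one takes all $b$ subtrees to receive equal counts at every level, so $E_{n_j}=b\,E_{n_{j-1}}$ and hence $\psi(n_j)=\psi(n_{j-1})+\trans$, giving $\Delta_{n_j}\to\trans=\tfrac{b-1}{b}=\tfrac{b-k-1}{b}$; for general $k$ one uses a bounded-period pattern of splittings tuned so that the average of $p_{n_u}-k$ down the recursion equals $\tfrac{b-k-1}{b-1}$ (for the $\limsup$), and the corresponding negative value for the $\liminf$. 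For the matching bounds $\psi(n)\le\tfrac{b-k-1}{b}\log_b n+o(\log n)$ and $\psi(n)\ge-\tfrac{k-1}{b}\log_b n+o(\log n)$ --- together with $\psi\ge 0$ when $k=0$ (Theorem~\ref{multi}(ii), since $\mathbf k=\down$ there) and $E_n\le E_n(\TT_b,\down)$ in general (Theorem~\ref{multi}(i)) --- I would set up a monovariant argument on the pair $(\psi(n),p_n)$: a node cannot have its rotor simultaneously extreme and pass down equal child-counts (when $k\ge 1$, a node with rotor at $b$ necessarily sends down two distinct counts), so propagating the rotor positions down the recursion forces the ``deficits'' $b-p_{n_u}$ (resp.\ the ``excesses'' $p_{n_u}$) to accumulate at a definite rate, which converts via the displayed formula into exactly the claimed logarithmic bounds.

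The main obstacle I anticipate is this last step --- pinning down precisely which patterns of rotor positions are simultaneously realizable across all $\log_b n$ levels of the recursion, and proving a matching inequality. The subtlety is that the naive level-by-level bound on $p_{n_u}-k$ overshoots the true answer (for the $\limsup$ it gives $\tfrac{(b-1)(b-k)}{b^2}$ rather than $\tfrac{b-k-1}{b}$, and these differ precisely when $k\ge 1$), because the rotor cannot sit at its extreme position at a node and at all of its children at once; extracting the exact constants $\tfrac{b-k-1}{b}$ and $-\tfrac{k-1}{b}$ requires identifying the genuinely optimal periodic rotor pattern and a tight potential/monovariant argument controlling how far the accumulated discrepancy can drift.
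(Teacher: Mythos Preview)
Your recursion $b\,\psi(n)=\sum_i\psi(R_n+\delta_i)+\trans\,(p_n-k)$ is correct, and the iterated expansion of $\psi$ as a weighted average of rotor positions is a reasonable starting point. But the proposal has a genuine gap at exactly the place you yourself flag: you do not carry out the ``monovariant argument'' that would give the matching upper and lower bounds, and you note that the naive layer-by-layer estimate produces the wrong constants. Determining which rotor patterns $(p_{n_u})$ are jointly realizable down the recursion, and extracting from that the precise values $\tfrac{b-k-1}{b}$ and $-\tfrac{k-1}{b}$ (including the exceptional $k=0$ case), is essentially the whole content of the theorem; your sketch of ``a node cannot have its rotor simultaneously extreme and pass down equal child-counts'' is an observation, not an argument, and the subsequence constructions for general $k$ are left as ``a bounded-period pattern \ldots\ tuned so that'' without saying what the pattern is.

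The paper bypasses this difficulty by a different and much shorter route. Rather than iterating a recursion for the discrepancy, it applies the explosion formula at the level of the escape \emph{sequence} to get the fixed-point equation $e=\ex\bigl(k\,\theta e+(b-k)\,e\bigr)$, and from this (together with $e_1=1$) derives an explicit closed form for $e(\TT_b,\mathbf k)$ (Proposition~\ref{regular}): for $k=0$ one obtains $e=1\,0^{z_1}\,1\,0^{z_2}\,1\cdots$ where $z_t$ is the number of trailing base-$b$ zeros of $t$, while for $0<k<b$ one obtains $e=1\,0^{\ind[\ell_1=b-k]}\,1\,0^{\ind[\ell_2=b-k]}\,1\cdots$ where $\ell_t$ is the last nonzero base-$b$ digit of $t$. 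With this in hand, $E_n-\trans n$ is an explicit function of the base-$b$ digits of $m=E_n$ (for $k=0$ it is exactly $D_m/b$, where $D_m$ is the digit sum), and the $\liminf$ and $\limsup$ reduce to routine digit-sum extremizations over $m$ with a given number of base-$b$ digits. The explicit formula thus replaces the realizability/monovariant analysis you anticipate with a direct computation, and in particular makes the $k=0$ anomaly transparent (only in that case does the escape sequence contain consecutive zeros).
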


Recall that the remaining case $k=b$ is covered by Theorem
\ref{multi}(iii). The presence of the exceptional case $k=0$
for the liminf is perhaps surprising.  In particular, the
difference $\limsup\Delta_n-\liminf\Delta_n$ equals $(b-2)/b$
for $k>0$ but $(b-1)/b$ for $k=0$.  In the only case where this
difference is zero, namely $(b,k)=(2,1)$, we in fact have
$E_n=\lceil n/2\rceil$, as noted in \cite{landau-levine}.  The
proof of Theorem \ref{log} involves an explicit expression for
$E_n(\TT_b,\mathbf{k})$ for the general case, so further
refinements of the bounds are available.

Now consider a {\em random} initial configuration $R$ on the $b$-ary tree, in
which $(R(v))_{v\in V_\rt}$ are independent identically distributed random
variables having some distribution on $\{0,1,\ldots,b\}$.  Let $\P$ denote
probability and $\E$ expectation.  Assume that the distribution is not
deterministic (otherwise Theorems \ref{multi}(iii) and \ref{b-1} apply). Then
the behaviour of the rotor walk depends dramatically on the distribution.
\begin{thm}[Discontinuous phase transition]\label{phase}
For a non-deterministic i.i.d.\ rotor configuration $R$ on the
$b$-ary tree $\TT_b$, writing $v(\neq\rt)$ for an arbitrary
vertex, we have almost surely
\begin{mylist}
\item $\lim_{n\to\infty}
    \displaystyle\frac{E_n(\TT_b,R)}{n}=\trans \qquad\text{if
    }\E R(v) <b-1;$
    \vspace{2mm}
\item
$E_n(\TT_b,R)=0, \;\forall n\geq 0 \qquad\text{if }\E R(v) \geq b-1.$
\end{mylist}
\end{thm}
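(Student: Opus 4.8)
The plan is to treat the two regimes separately and reduce everything to a single random variable attached to each vertex, namely the number of "extra" escapes that the subtree rooted there can produce. For regime (ii), the hypothesis $\E R(v)\geq b-1$ together with non-degeneracy gives $\P[R(v)\geq b-1]>0$, so by Borel--Cantelli every infinite ray almost surely contains infinitely many vertices $v$ with $R(v)\geq b-1$, and in fact (by a second-moment or direct union-bound argument over the countably many rays, using that $\TT_b$ is spherically homogeneous) one can find, almost surely, a "cutset" structure: for every vertex $u$ there is a finite set of descendants $v$ separating $u$ from infinity with $R(v)=b$. Actually the cleanest route is: whenever $\E R(v)\ge b-1$ one can show the rotor walk started anywhere below such a vertex behaves, after finitely many particles, like the walk on the $\bb$ configuration of Theorem~\ref{multi}(iii). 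I would make this precise by an inductive/recursive argument on an exhaustion of the tree by finite subtrees, showing that once enough particles have passed through a vertex with large rotor value, no further particle ever escapes past it. Combined with monotonicity (Theorem~\ref{multi}(i)) to reduce the general distribution with $\E R(v)\ge b-1$ to the boundary case, this yields $E_n\equiv 0$.

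For regime (i), the strategy is to exploit self-similarity of $\TT_b$ under the i.i.d.\ configuration. Write $X$ for the (possibly infinite) number of particles that, after being launched from a vertex $v$ towards the subtree below $v$, escape to infinity before the rotor at $v$ first points back to the parent $v^{(0)}$ --- equivalently, the number of escapes in the subtree game restarted from $v$'s first child-pointer. By the recursive structure of rotor walk on trees (a particle entering the subtree at $v$ is routed to one of the $b$ children, and only returns to be sent to the next child or back up once that child's subtree has "absorbed" its quota), $X$ satisfies a distributional fixed-point equation of branching type: $X$ is a sum of the $X^{(j)}$ over the children, shifted by the initial rotor value $R(v)$, in a way that exactly mirrors the random-walk escape recursion $\trans = 1 - (1+\sum (1-\trans_j)^{-1}\cdots)^{-1}$ on the tree. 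The key point is that $\E X$ (or the relevant generating function) is governed by $\E R(v)$, and the threshold $\E R(v) = b-1$ is precisely where the associated branching process goes from supercritical/critical to subcritical in the appropriate sense. When $\E R(v) < b-1$ one shows $X=\infty$ almost surely (the subtree below any vertex absorbs only finitely many particles before leaking an infinite stream), which forces $\liminf E_n/n \ge \trans$; the matching upper bound is Theorem~\ref{oded}.

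Concretely, the main steps are: (1) set up the subtree-escape random variable $X_v$ and derive its recursion, identifying the role of $R(v)$; (2) compute the critical parameter and show it equals $\E R(v)=b-1$ — this is where one checks that the branching mechanism has mean related to $(b-1)/(b - \E R(v))$ or similar, so that $\E R(v)<b-1$ puts us on the "infinitely many escapes from every subtree" side; (3) in regime (i), use a law-of-large-numbers / ergodic argument across the $n$ successive particles launched from the root: since each subtree below the root's child eventually leaks escapes at density $\trans$ by self-similarity and Theorem~\ref{multi}(i),(ii) applied to a typical deterministic sample's finite truncations, conclude $E_n/n \to \trans$; (4) in regime (ii), the Borel--Cantelli cutset argument plus monotonicity gives $E_n\equiv 0$. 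The main obstacle I anticipate is step (2)–(3): making rigorous the claim that the i.i.d.\ configuration behaves like $\down$ "on average" requires controlling the rotor dynamics across many particles simultaneously, i.e.\ showing that the abelian/commutativity properties of rotor walk let one decouple the contributions of different subtrees and then apply a strong law; the delicate part is that $X_v$ can be infinite, so one must work with truncated versions and pass to the limit carefully, and one must rule out the critical case $\E R(v)=b-1$ leaking a positive density of escapes (which is exactly why the phase transition is discontinuous — at criticality $E_n=0$, consistent with putting the critical case in regime (ii)).
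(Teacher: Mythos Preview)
Your proposal has genuine gaps in both regimes.

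For regime~(ii), the cutset argument is simply false. A cutset of vertices with $R(v)=b$ exists a.s.\ only when the set $\{v:R(v)\neq b\}$ percolates subcritically on $\TT_b$, i.e.\ when $b\,\P[R(v)\neq b]\leq 1$; but $\E R(v)\geq b-1$ does not imply this (take $R(v)\equiv b-1$ deterministically, where $\P[R(v)=b]=0$). Your fallback---``behaves, after finitely many particles, like the walk on the $\bb$ configuration''---is circular, and the appeal to monotonicity to reduce to a boundary case does not work: Theorem~\ref{multi}(i) compares two \emph{deterministic} configurations pointwise, not two distributions with the same mean. The paper's argument is different and short: call a child $u^{(k)}$ \emph{good} if $R(u)<k$, so that live paths (infinite forward-pointing rays) correspond to infinite chains of good children. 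The number of good children of $u$ is exactly $b-R(u)$, so the good-descendant tree is Galton--Watson with offspring law $b-R(v)$, which has mean $\leq 1$ and is non-degenerate, hence dies out. One then needs a separate result (Proposition~\ref{live-ends}) that the total number of escapes equals the number of live ends. You never identify this branching structure on \emph{children} (rather than on rotor values), which is the crux.

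For regime~(i), your fixed-point variable $X$ and the claim ``$X=\infty$ a.s.'' are too vague to evaluate, and even if true would not yield the density statement: infinitely many escapes does not give $\liminf E_n/n>0$, let alone $=\trans$. The paper's proof has two steps you are missing. First, one must get $\liminf E_n/n\geq\delta>0$ a.s.\ for \emph{some} $\delta$; the paper does this via a surprising exact result (Proposition~\ref{exact}): on any finite $b$-ary tree of height $h+1$ with \emph{arbitrary} rotors, running $1+b+\cdots+b^h$ particles sends exactly one to each leaf. Combined with the Abelian property, this shows $E_n$ dominates a Binomial whose success probability is $\P[\text{first particle escapes}]>0$ (positive because the good-children branching process is now supercritical), and a large-deviation bound plus Borel--Cantelli gives the a.s.\ positive density. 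Second, one bootstraps from $\delta$ to $\trans$: the explosion formula yields a deterministic inequality $\ell\geq 1-1/(1+\sum_i\ell_i)$ relating the liminf of a tree to those of its principal branches; by self-similarity, if $L:=\liminf E_n/n\geq a$ a.s.\ then $L\geq 1-1/(1+ba)$ a.s., and iterating drives $a$ to the fixed point $1-1/b=\trans$. Your step~(3) (``LLN/ergodic argument'') gestures at something like this but does not supply either the mechanism to get started (positive density) or the recursion to finish (bootstrap to $\trans$).
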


The dichotomy in Theorem \ref{phase} reflects the survival or
otherwise of a certain branching process.  In particular we
note that the archetypal example of i.i.d.\ uniformly random
rotors on the binary tree $\TT_2$ (i.e.\ $\P(R(v)=i)=1/3$ for
$i=0,1,2$) is critical ($\E R(v)=b-1$), so there are no
escapes.  This answers a question of Jim Propp (personal
communication). Note the contrast with the {\em deterministic}
case $R(v)\equiv b-1$, in which Theorem \ref{b-1} shows that
the proportion of escapes is $\trans$.

Finally, we show that in the critical case, particles typically
reach very great depths in the tree before returning to the
root.  This has implications for simulation: although all
particles return to the root, this will not be evident in any
finite simulation of practical size.

\begin{thm}[Maximum depth]\label{T:depth}
  Let $R$ be a non-deterministic i.i.d.\ configuration on the $b$-ary
  tree $\TT_b$, for $b\geq 2$. Let $D_n$ be the maximum distance from
  the root reached by the first $n$ particles.  Then for some constants
  $c,C\in(0,\infty)$ depending on the distribution of $R$,
\begin{mylist}
\item
$\P\bigl[ e^{e^{cn}} < D_n < e^{e^{Cn}} \bigr]\to 1 \text{ as }
  n\to\infty \quad\text{if }\E R(v)= b-1; $
  \vspace{1mm}
\item $\P\bigl[n \leq D_n < Cn \bigr]\to 1 \text{ as }
  n\to\infty \quad\text{if }\E R(v)> b-1.$
\end{mylist}
\end{thm}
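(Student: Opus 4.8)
The plan is to recast $D_n$ as a first‑passage quantity and reduce everything to the minimal displacement of an associated branching random walk on $\TT_b$.

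\emph{Reduction.} Since $\E R(v)\le b-1$ in both cases, the branching process of Theorem~\ref{phase} is (sub)critical, so almost surely no particle escapes and every excursion returns to $\rt$ after finitely many steps. Moreover an induction down the tree shows that in \emph{each} excursion every visited vertex is entered from its parent exactly once and is left with rotor value $0$ (pointing at its parent): a vertex's parent is itself entered only once, so its rotor moves monotonically within the excursion and cannot re‑send to the vertex. Writing $j_v$ for the first excursion in which $v$ is visited, $p(v)$ for its parent and $\iota(v)\in\{1,\dots,b\}$ for its label among the children of $p(v)$, these facts give the exact recursion $j_v=j_{p(v)}+\ind[R(p(v))\ge\iota(v)]$ with $j_{\rt^{(1)}}=1$. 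Iterating, $\mathrm{cost}(v):=j_v-1$ equals the number of edges $(p(w),w)$ on the path from $\rt^{(1)}$ to $v$ with $R(p(w))\ge\iota(w)$; call such an edge \emph{costly}, and note that a vertex $p$ with $R(p)=r$ has exactly $r$ costly edges to its children and $b-r$ \emph{free} ones. Letting $m_d$ be the minimal number of costly edges on a path from $\rt$ to depth $d$, we obtain
\[
 D_n=\max\{d\ge 1:\ m_d\le n-1\},
\]
with $(m_d)$ nondecreasing and $m_d\le d-1$, so that $D_n\ge n$ always — the lower bound of part~(ii). Here $m_d$ is precisely the minimal displacement at generation $d-1$ of the branching random walk in which a vertex with rotor value $r$ has $r$ children at displacement $+1$ and $b-r$ at displacement $0$; the free edges form a Galton--Watson tree with offspring law $\mathrm{law}(b-R(v))$, of mean $\mu:=b-\E R(v)\le1$.

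\emph{Part~(ii): $\E R(v)>b-1$, i.e.\ $\mu<1$.} Only a linear lower bound on $m_d$ is needed, and this follows from a first‑moment estimate. Averaging over the choice of child at each step makes the edge‑costs of a random descending path i.i.d.\ Bernoulli$(q)$ with $q:=\E R(v)/b$, so
\[
 \P(m_d\le a)\ \le\ b^{d-1}\,\P\bigl(\Bin(d-1,q)\le a\bigr)\ =\ \exp\!\bigl\{(d-1)\bigl(\log b-I(a/(d-1))\bigr)+o(d)\bigr\},
\]
with $I$ the rate function of $\Bin(\cdot,q)$. Since $I(0^+)=-\log(1-q)$, the hypothesis $\E R(v)>b-1$ is exactly $q>(b-1)/b$, i.e.\ $I(0^+)>\log b$, so there is a unique $\gamma\in(0,1)$ with $I(\gamma)=\log b$ and $\log b-I(\alpha)<0$ for $\alpha<\gamma$. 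Hence $\P\bigl(m_d\le(\gamma-\eps)d\bigr)\to0$, so $\P(D_n\ge Cn)=\P(m_{\lceil Cn\rceil}\le n-1)\to0$ whenever $C>1/\gamma$; together with $D_n\ge n$ this proves part~(ii). (The sharper $D_n=(1/\gamma+o(1))n$ would follow from the classical law of large numbers $m_d/d\to\gamma$ for branching random walk, but is not required.)

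\emph{Part~(i): $\E R(v)=b-1$, i.e.\ $\mu=1$.} Now the free‑clusters are \emph{critical} Galton--Watson trees with bounded offspring, so $m_d\to\infty$ only very slowly. Put $A_k=\{v:\mathrm{cost}(v)\le k\}$, so $D_n=\mathrm{depth}(A_{n-1})$. The level $A_k\setminus A_{k-1}$ is a disjoint union of $N_k$ critical Galton--Watson trees, each the free‑cluster hanging off a costly edge out of level $k-1$; these are i.i.d.\ and independent of the already‑explored part (again because every vertex leaves every excursion with rotor value $0$). Counting costly edges out of level $k$ gives the deterministic recursion $N_{k+1}=(b-1)S_k+N_k$, with $S_k:=|A_k\setminus A_{k-1}|$, and conditionally on $N_{k+1}$ the size $S_{k+1}$ is a sum of $N_{k+1}$ i.i.d.\ critical Galton--Watson cluster sizes. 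Using the critical tails $\P(\mathrm{size}\ge s)\asymp s^{-1/2}$ and $\P(\mathrm{height}\ge h)\asymp h^{-1}$, one obtains, uniformly over large $k$ by Borel--Cantelli, that $S_{k+1}=S_k^{2+o(1)}$ and that the deepest of the $N_{k+1}\asymp S_k$ fresh trees has height $S_k^{1+o(1)}$ with its root at depth $\mathrm{depth}(A_k)\ll S_k$; hence $S_k$, $N_k$ and $\mathrm{depth}(A_k)$ all grow doubly exponentially in $k$. Since $D_n=\mathrm{depth}(A_{n-1})$, this gives $e^{e^{cn}}<D_n<e^{e^{Cn}}$ with high probability for suitable $0<c<C$; and each $A_k$ being a finite union of finite trees shows $m_d\to\infty$, so $D_n<\infty$.

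The main obstacle is part~(i): turning the heuristic $S_{k+1}\approx S_k^2$ into matching upper and lower bounds holding simultaneously for all large $k$. One is summing and maximising many i.i.d.\ critical Galton--Watson tree sizes and heights — variables with index‑$1/2$ and index‑$1$ tails, in particular of infinite mean — so ordinary concentration is unavailable; the remedy is a Borel--Cantelli argument with slowly growing ($\mathrm{polylog}$) slack, after which one checks that iterating the recursion $n$ times perturbs only the constants $c,C$ and not the $e^{e^{(\cdot)n}}$ rate. One must also verify carefully the combinatorial recursion for $N_k,S_k$ and the claimed independence of the fresh free‑clusters. By contrast part~(ii) is routine once the branching random walk picture is set up.
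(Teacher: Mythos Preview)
Your reduction is correct (modulo the typo $\E R(v)\le b-1$, which should be $\ge$), and once you unpack it the argument coincides with the paper's. The key identity $j_v=j_{p(v)}+\ind[R(p(v))\ge\iota(v)]$ is exactly what the paper uses for part~(ii), where it writes the event that particle $n$ reaches a fixed vertex $x$ at level $h+1$ as $\sum_{i=1}^h\ind[R(x_i)\ge k_i(x)]<n$ and then averages over a uniform $x$ to get the same Binomial first-moment/Chernoff bound you give. For part~(i), your sets $A_k$ are precisely the paper's $V_{k+1}$; your deterministic recursion $N_{k+1}=(b-1)S_k+N_k$ telescopes to the paper's $\#\Delta V_n=(b-1)\#V_n+1$, and your ``$S_{k+1}$ is a sum of $N_{k+1}$ i.i.d.\ critical GW sizes'' is the paper's recursion $\#V_{n+1}=\#V_n+\sum_{i=1}^{(b-1)\#V_n+1}Z_i$. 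What your framing buys is a single first-passage picture covering both regimes, and a clean branching-random-walk interpretation of $m_d$; what the paper buys is an explicit execution of the part you flag as the obstacle. There the paper does \emph{not} use Borel--Cantelli with polylog slack: for the lower bound it shows $\P\bigl(\#V_{n+1}>(\#V_n)^2\,\big|\,V_1,\dots,V_n\bigr)\ge c_3$ uniformly and invokes the law of large numbers on the count of successful squarings (using only $\#V_n\ge n$ to start), and for the upper bound it proves the uniform moment estimate $\E\bigl[(\log\sum_1^N Z_i)/\log N\bigr]\le C_3$ and applies Markov to $\log\#V_n$. These are short and avoid the circularity your Borel--Cantelli sketch would otherwise need to break.
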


\subsection*{Further Remarks}

The rotor walk was first introduced in \cite{pddk}, under the
name ``Eulerian walkers model''.  For a general account of the
model and its history, see \cite{hlmppw}.  In \cite{h-propp} it
was shown that on finite graphs, and some infinite graphs, many
quantities associated with random walk (hitting probabilities,
hitting times, stationary measures, etc.) are very well
approximated by rotor walk counterparts.  In
\cite{cooper-doerr-spencer-tardos,cooper-spencer} a different
variant is studied, in which several rotor walks are run
simultaneously for a fixed number of steps; again it is shown
that this model closely approximates the expected behaviour of
random walk.  The rotor aggregation model (introduced by Propp)
is a rotor version of internal diffusion-limited aggregation,
in which successive rotor walks are started at a fixed vertex
and run until they first reach a previously unoccupied vertex.
On the integer lattice, the resulting occupied set rapidly
approaches a spherical ball, as proved in
\cite{levine-peres,levine-peres-2}.

Rotor walks on trees have been studied in detail by Landau and
Levine \cite{landau-levine} (as well as earlier in
\cite{cooper-doerr-friedrich-spencer,dtw}). In particular, on
the binary tree $\TT_2$ they identified the escape sequences
for the constant configurations $\down$ (no escapes) and
$\mathbf{1}$ (alternating escapes and returns), and furthermore
provided an exact characterization for the set of possible
escape sequences $(E_n(\TT_2,r))_{n\geq 0}$.  This
characterization is more conveniently stated in terms of the
binary escape sequence $e=(e_1,e_2,\ldots)$ where $e_n$ is the
indicator that particle $n$ escapes (so $E_n=\sum_{i=1}^n
e_i$).  The result of \cite{landau-levine} is that $e$ is an
escape sequence for some rotor configuration on $\TT_2$ if and
only if for each $k\geq 2$, each interval of $e$ of length
$2^k-1$ contains at most $2^{k-1}$ ones.  In particular it
follows that if $e$ is a possible escape sequence, then so is
any $e'$ satisfying $e'_n\leq e_n$ for all $n$ (this implies
that $E_n/n$ may have arbitrary liminf and limsup in $[0,1/2]$,
as remarked earlier).  We do not know whether this holds for
general trees.

The rotor aggregation model on regular trees was also studied
in \cite{landau-levine}, where it was shown that the occupied
set is exactly a ball for acyclic initial rotor configurations.
The proof makes use of the sandpile group (see also
\cite{hlmppw,levine-group}).  We remark that the rotor
aggregation model on a tree $T$ can be encoded as an ordinary
rotor walk process on a different tree $T'$.  We construct $T'$
by attaching a singly infinite path to each vertex of $T$, with
all the rotors on the new vertices pointing towards $T$, and
each rotor at an original vertex of $T$ set to point next along
the added path, and then in the direction it would have pointed
next in $T$.  When a particle reaches a vertex of $T$ for the
first time, it immediately escapes along the path (signifying
an aggregation).  On subsequent visits to the vertex, the
particle makes only finite excursions along the path, and then
continues as the rotor walk on $T$ would have done.

We briefly mention two somewhat degenerate classes of trees
which we will not focus on.  First, suppose some vertices have
no children (this is not forbidden by our assumptions). Then we
may remove every vertex that has only finitely many descendants
without affecting the behaviour of the rotor walk or the random
walk, so there is no loss generality in discarding this case.
Second, suppose a vertex $w$ has infinitely many children
(which is forbidden by our assumptions).  Under the mild
condition that the rotor at $w$ will point to infinitely many
children whose subtrees have the property that the first
particle sent there escapes, then $w$ acts as a {\em sink},
which is to say that every particle sent there escapes (and no
other information about $w$'s descendants is relevant). Finite
trees with sinks are considered in Section \ref{s:trunc}, and
many of our results could be extended without difficulty to
infinite trees with sinks, but we have chosen not to pursue
this.

Some applications of rotor walks may be found in
\cite{doerr-friedrich-sauerwald,dtw,friedrich-gairing-sauerwald},
and further recent progress is reported in
\cite{friedrich-levine,kager-levine,levine-peres-3}.

A key tool for our proofs will be a recursive formula for the
escape numbers $E_n(T,r)$, called the {\em explosion formula},
which generalizes ideas in \cite{landau-levine} and is
introduced in Section~\ref{s:explosion}.  This will be used to
prove monotonicity (Theorem~\ref{multi}(i)), the bounds for
regular trees (Theorem~\ref{log}), and a number of other
results.  The large-discrepancy tree in Theorem
\ref{T:many_escape} will constructed from {\em brushes}
(higher-dimensional analogues of combs), which we will analyze
in Section~\ref{s:brush}. Theorem \ref{b-1} will use a
martingale argument for random walks on trees (Section
\ref{s:b-1}).  As remarked earlier, the discontinuous phase
transition in Theorem~\ref{phase} is related to a branching
process.   A simple argument will show that there are no
escapes in the subcritical and critical cases (ii) (see
Section~\ref{s:live}); for the supercritical case (i), the
proof makes use of the Abelian property
(Section~\ref{s:random}). The doubly-exponential depth in the
critical case, Theorem~\ref{T:depth}(i), reflects the fact that
the set of visited vertices is (a minor variation of) a
branching process whose offspring distribution is itself the
total population size of a critical branching process; see
Section~\ref{s:depth}.

We will prove several other results in addition to those
mentioned above, including: an expression for the total number
of escapes (Proposition \ref{live-ends}); Lipschitz,
monotonicity, and invariance conditions for the escape numbers
in terms of rotors, trees, and rotor orders
(Propositions~\ref{lip}, \ref{subgraph}, and \ref{rot-mech}); a
self-majorization property (Proposition \ref{self-maj}); and a
surprising regularity result for the behaviour of rotor walks
on finite regular trees (Proposition \ref{exact}) with an
arbitrary rotor configuration.

\section{Live paths}
\label{s:live}

In this section we prove Theorem~\ref{multi}(iii) and
Theorem~\ref{phase}(ii). We introduce the concept of live
paths, which will also be important for the later proof of
Theorem \ref{phase}(i).

\begin{proof}[Proof of Theorem~\ref{multi}(iii)]
  With initial configuration $\mathbf{b}$, suppose for a contradiction that
  some particle escapes to infinity. Let particle $n$ be the first such
  particle. Clearly there exists a vertex $v$ such that particle $n$ visits
  both $v$ and one of its children, and $v$ was not visited by any previous
  particle. Since on the first visit to $v$ the particle is sent back
  towards the root, particle $n$ must move from the parent $v^{(0)}$ to $v$
  at least twice.

  Let $u$ be the first vertex to receive particle $n$ twice from its
  parent. The parent cannot be $\rt$, since by assumption particle $n$
  never returns to $\rt$. Hence the rotor at the parent $u^{(0)}$ must have
  made a full rotation between the two traversals of particle $n$ from
  $u^{(0)}$ to $u$. In particular, $u^{(0)}$ sent the particle to its
  parent in between, and thus $u^{(0)}$ received the particle twice from
  its parent before $u$ did, a contradiction.
\end{proof}

Given a tree $T$ and a rotor configuration $r$, a \df{live
path} is an infinite sequence of vertices $\rt\neq
x_1,x_2,\ldots$, each the parent of the next, such that for
each $i$, the $k$ such that $x_{i+1}=(x_i)^{(k)}$ satisfies
$r(x_i)<k$ -- in other words, $x_i$ will send the particle
forward to $x_{i+1}$ before sending it back towards the root.
An \df{end} of $T$ is an infinite sequence of vertices
$\rt=x_0,x_1,\ldots$, each the parent of the next. Call an end
\df{live} if the subsequence $(x_i)_{i\geq j}$ starting at one
of its vertices is a live path. Let
$E_\infty(T,r):=\lim_{n\to\infty} E_n(T,r)$ be the total number
escapes ever (which may be a non-negative integer or $\infty$).

\begin{prop}[Live ends]\label{live-ends}
The total number of escapes $E_\infty(T,r)$ equals the number of live ends in
the initial rotor configuration $r$.
\end{prop}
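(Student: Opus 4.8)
The plan is to show that the total number of escapes equals the number of live ends by analyzing what a single rotor walk from the root does, and how the rotor configuration evolves from one walk to the next. The first observation to establish is that a particle escapes to infinity in a single rotor walk if and only if, at the moment it reaches the root and starts, the current rotor configuration contains at least one live end \emph{through the root's child}. More precisely, I would argue that if the configuration has a live path starting at the root's child, then the particle follows that path forever (since at each vertex $x_i$ along the path, $r(x_i) < k$ where $x_{i+1} = (x_i)^{(k)}$, so the first increment at $x_i$ either already points along the path or will after finitely many excursions --- but in fact on the \emph{first} visit the rotor moves from $r(x_i)$ to $r(x_i)+1 \le k$, and one needs to track that repeated visits eventually route it to $x_{i+1}$). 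Conversely, if the particle escapes, the sequence of vertices it visits infinitely often forms an end, and a careful look at the rotor mechanics (similar to the contradiction argument in the proof of Theorem~\ref{multi}(iii)) shows this end must be live at the start of that walk.

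The second, and I expect the harder, ingredient is to understand how live ends are created and destroyed. A key claim is: running one rotor walk from the root that \emph{returns} to the root does not change the number of live ends; running one that \emph{escapes} decreases the number of live ends by exactly one. For the returning case, one shows the net effect on rotors is to redistribute a full number of rotations so that no new live path is opened and none closed --- essentially because a returning walk visits each vertex a ``balanced'' number of times in the sense that rotor increments at a vertex are compensated by returns through its parent. For the escaping case, the particle traverses its eventual end infinitely often while visiting every off-path vertex only finitely often, and the effect is precisely to advance the rotors along that live path past the threshold $k$, so that path is no longer live; one must check no \emph{new} live end appears elsewhere, which again follows from the balanced-visit property at vertices off the escaping end. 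I would prove these claims by a localized version of the explosion/balance argument and by comparing with the abelian property of rotor walks.

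Granting these two ingredients, the proposition follows: let $L$ be the (possibly infinite) number of live ends in the initial configuration $r$. Each escaping walk reduces the live-end count by one and each returning walk leaves it unchanged, and a walk escapes precisely when the current count is positive (since a positive count means there is a live end, and a minimal-depth live path through the root's child guides the particle to infinity). Hence exactly $L$ of the infinitely many walks escape if $L < \infty$, giving $E_\infty(T,r) = L$; and if $L = \infty$ then every walk escapes (the count never reaches zero), giving $E_\infty(T,r) = \infty = L$.

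The main obstacle I anticipate is the bookkeeping in the ``one escape destroys exactly one live end and creates none'' claim: one must rule out the possibility that the rotor advancement along the escaping path, combined with the finitely many excursions off it, conspires to open up a live path somewhere that was previously blocked. I would handle this by isolating each vertex $v$ not on the escaping end, noting that the particle visits $v$ only finitely often and hence the rotor at $v$ makes only finitely many full turns, and then tracking the parity/count of traversals $v^{(0)} \to v$ versus $v \to v^{(0)}$ to conclude that the rotor at $v$ returns to a position no further advanced (mod $b(v)+1$) than would close any path --- formally, that the final configuration restricted away from the escaping end is dominated in the sense of Theorem~\ref{multi}(i) by one with the same or fewer live ends. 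An alternative and perhaps cleaner route is to induct on $n$ using the explosion formula of Section~\ref{s:explosion} directly, once that machinery is available; but since the proposition is stated before that section, I would give the self-contained argument sketched above.
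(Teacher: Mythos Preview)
Your overall strategy---track the set of live ends, show returning walks leave it unchanged and escaping walks delete exactly one element---matches the paper's. But there is a genuine gap in your final accounting step. You assert that ``a walk escapes precisely when the current count is positive,'' and in particular that if $L=\infty$ then \emph{every} walk escapes. This is false: a live end may have its live path starting at a vertex $x_j$ far from the root, and a particle started at $\rt$ can return to $\rt$ without ever reaching $x_j$, even though the live-end count is positive. What you need instead is the weaker statement that if the count is positive then \emph{eventually} some walk escapes. The paper supplies this via the observation that, over the infinite sequence of restarts, every vertex is visited infinitely often (true for $\iota$, hence for each of its children by rotor cycling, and so on inductively); therefore some particle eventually reaches the start $x_j$ of a still-untouched live path, and from there escapes by your first ingredient.

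You are also overcomplicating the ``no new live ends are created'' claim. Live-ness of an end is a tail property: it depends only on the rotors at $x_j, x_{j+1}, \ldots$ for some $j$, and you may take $j$ as large as you like. A returning walk alters only finitely many rotors, so it cannot change the set of live ends at all. An escaping walk alters the rotors along one end $\eta$ (making that end non-live, since afterwards every rotor on $\eta$ points along $\eta$ away from the root) together with finitely many rotors off $\eta$; any other end shares only a finite initial segment with $\eta$, so again its live-ness is unaffected. No parity or domination bookkeeping is needed. (A small side remark: when a particle escapes it visits each vertex only finitely often, so you mean ``the end contained in the set of visited vertices,'' not ``the vertices visited infinitely often.'')
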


\begin{proof}
  Suppose that $\rt\neq x_1,x_2,\ldots $ is a live path, and that the
  particle is currently located at $x_1$. We claim that the particle will
  escape to infinity without ever visiting the parent of $x_1$. Suppose on
  the contrary that it visits the parent of $x_1$ at some (finite) time. By
  the definition of a live path, it must previously have visited $x_2$.
  Similarly, considering the rotor at $x_2$, we deduce that it must have
  visited $x_3$ before moving from $x_2$ to $x_1$. Repeating this argument
  shows that the particle must have visited the infinite set of vertices
  $x_2,x_3,\ldots$ before the parent of $x_1$, a contradiction which proves
  the claim.

Now suppose that the set of live ends is $L$, and a particle is started at
$\rt$.  If the particle returns to $\rt$ then the set of live ends is still
$L$ at this point, because only finitely many vertices have been visited.
Suppose on the other hand that the particle escapes without returning to
$\rt$.  The set of all vertices visited by this particle consists of exactly
one end $\eta=(x_0,x_1,\ldots)$, together with finite trees rooted at
vertices of $\eta$.  We claim that the particle never backtracks on $\eta$,
i.e.\ after its first visit to $x_{i+1}$, it never visits $x_i$.  This holds
for $i=1$ by assumption.  Suppose that it fails for some $i>1$, i.e.\ the
particle visits $x_i$, then $x_{i+1}$, then $x_i$ again.  Before another
visit to $x_{i+1}$ (which must eventually occur), the rotor at $x_i$ must
make a full rotation, so the particle must visit $x_{i-1}$, implying that the
claim fails for $i-1$ also. Thus, the non-backtracking claim is proved.  It
follows that $(x_1,x_2\ldots)$ was originally a live path, and in particular
$\eta\in L$. Just after the particle has escaped, all rotors on $\eta$ point
along $\eta$ away from the root, so $\eta$ is no longer live; however, any
other end has been visited at only finitely many of its vertices;
consequently the set of live ends is now $L\setminus\{\eta\}$.

Finally observe that, in the infinite sequence of restarts at
$\rt$ implicit in the definition of $E_\infty$, each vertex is
visited infinitely often (this holds for the child of $\rt$,
therefore for each of its children, etc.).

The result now follows from the above claims.  So long as there are live
ends, there will eventually be a visit to a live path and an escape, which
results in the set of live ends being depleted by one; at all other times
this set remains unchanged.
\end{proof}

As usual, if vertex $u$ is on the unique self-avoiding path from $v$ to
$\rt$, then we say that $u$ is an \df{ancestor} of $v$, and $v$ is an
\df{descendant} of $u$.

\begin{proof}[Proof of Theorem \ref{phase}(ii)]
Suppose that $R$ is an i.i.d.\ random configuration.  Let $u\neq \rt$ be a
vertex and let $v=u^{(k)}$ be one of its children. Call the child $v$
\df{good} if $R(u)<k$.  Thus, $x_1,x_2,\ldots$ is a live path if and only if
$x_2,x_3,\ldots$ are all good.  On the other hand, the number of good
children of $u$ is precisely $b-R(u)$.  Hence, for any fixed $u\neq\rt$, the
set of descendants $w$ of $u$ that can be reached from $u$ via a path of good
vertices (including $w$ but not necessarily $u$) forms a Galton-Watson
branching process with offspring distribution that of $b-R(u)$.  If $R$ is
non-deterministic and $\E R(v) \geq b-1$ then this process dies out, hence
a.s.\ there are no live paths, and therefore no escapes by
Proposition~\ref{live-ends}.
\end{proof}

\section{Truncation}
\label{s:trunc}

It will sometimes be useful to approximate infinite trees with finite ones.
Let $T$ be a {\em finite} tree with a root $\rt$ of degree $1$, and let
$S\not\ni\rt$ be a non-empty subset of the leaves; we call elements of $S$
\df{sinks}. Consider a rotor walk with initial configuration $r$, started at
$\rt$ and stopped at the first entry to $S\cup\{\rt\}$.  Let $E_n(T,S,r)$ be
the number of particles that stop at $S$ when $n$ such walks are run in
succession, starting with configuration $r$.

The \df{level} of a vertex is its graph-distance from the root.
Given an infinite tree $T$ and a positive integer $h$, define
the truncated tree $T^h$ to be the subgraph induced by the set
of vertices at levels at most $h$, and let $S^h$ be the set of
vertices at level exactly $h$.  Also let $r^h$ denote the rotor
configuration $r$ restricted to the vertices of $T^h$
(excluding $S^h$ and $\rt$).

\begin{lemma}[Truncation]
\label{trunc}
For any tree $T$, any rotor configuration $r$, and any $n$,
$$E_n(T^h,S^h,r^h)\searrow E_n(T,r) \quad\text{as }h\to\infty,$$
(i.e.\ the left side is decreasing in $h$, and equals the right side for $h$
sufficiently large).
\end{lemma}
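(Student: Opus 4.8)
The plan is to couple the rotor walk on $T$ with the rotor walk on $T^h$ and track, visit by visit, how the two processes agree. The key observation is that a rotor walk from $\rt$ on $T$, stopped at the first return to $\rt$, behaves exactly like a rotor walk on $T^h$ with sinks $S^h$, \emph{up until the first time the particle reaches level $h$} (if ever). The rotor mechanics at vertices of levels $<h$ are identical in $T$ and $T^h$, and the initial rotor configurations agree there by definition of $r^h$. So, coupling the two walks to make identical rotor decisions, either neither walk ever reaches level $h$ in the current excursion -- in which case both excursions are identical, the particle returns to $\rt$ in both (or escapes without reaching level $h$, which is impossible since an escaping end passes through every level), the resulting rotor configurations on $T^h$ agree, and we may proceed inductively to the next particle -- or the $T^h$-walk stops at a sink in $S^h$ on its first visit there, counting as an escape for $E_n(T^h,S^h,r^h)$, while the $T$-walk continues and may or may not return to $\rt$.

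First I would make this coupling precise and run it simultaneously for all $n$ particles, maintaining the invariant that, at the start of each particle's walk, the rotor configurations on $T$ and on $T^h$ agree on all vertices of level $<h$ \emph{as long as the $T^h$-walk has not yet been ``diverted'' by reaching a sink}. The cleanest bookkeeping is: call particle $i$ \df{$h$-shallow} if its entire excursion on $T$ stays at levels $<h$, and \df{$h$-deep} otherwise. For $h$-shallow particles the two processes stay in lockstep. For an $h$-deep particle, the $T^h$-walk stops at a sink (an escape there), whereas the $T$-walk may return to $\rt$. Hence every particle counted as an escape on $T$ is also counted on $T^h$ -- because an escaping particle is certainly $h$-deep, and an $h$-deep particle always stops at $S^h$ in $T^h$ -- giving $E_n(T^h,S^h,r^h)\geq E_n(T,r)$ for every $h$ and $n$. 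For monotonicity in $h$, I would apply the same coupling argument between $T^{h+1}$ and $T^h$: the $T^h$-walk is obtained from the $T^{h+1}$-walk by stopping one level earlier, so by the identical reasoning $E_n(T^h,S^h,r^h)\geq E_n(T^{h+1},S^{h+1},r^{h+1})$.

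The remaining point is that the inequality becomes equality for $h$ large. Here I would use the description of escapes via live ends from Proposition \ref{live-ends}. The quantity $E_\infty(T,r)$ is the number of live ends of $r$; if this is finite, say equal to $m$, then after finitely many escapes all live ends are exhausted, and there is some level $h_0$ such that all the relevant finite-excursion structure and all $m$ live ends are ``decided'' by level $h_0$ -- more carefully, one shows that for $h$ large enough, no particle that returns to $\rt$ in $T$ is $h$-deep (the returning particles make bounded-depth excursions, uniformly bounded because there are only finitely many of them among the first $n$, and we can take $h$ exceeding that bound), and each escaping particle's end does reach level $h$, so the $T^h$-walk counts exactly the escaping particles. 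If $E_\infty(T,r)=\infty$, then $E_n(T,r)\leq n$ forces equality for large $h$ for each fixed $n$ by the same local argument, since only the first $n$ excursions matter and they involve finitely many vertices. The main obstacle is making the ``decided by level $h$'' claim rigorous: one must argue that for fixed $n$, the union of the first $n$ excursions in $T$ visits only finitely many vertices (immediate, since each excursion is finite on a locally finite tree, returning excursions being finite by definition and escaping ones visiting each vertex finitely often but only finitely many of them up to any level), hence is contained in $T^h$ for $h$ large, and for such $h$ the coupling shows the $T^h$-process reproduces the $T$-process exactly on those $n$ particles, with $h$-deep particles being precisely the escaping ones. That pins down the equality.
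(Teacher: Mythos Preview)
Your coupling has a real gap in the monotonicity step. You want the invariant that the rotor configurations on $T$ and $T^h$ agree on all vertices of level $<h$ at the start of each particle's excursion, but this breaks the first time a particle on $T$ reaches level $h$ and then \emph{revisits} some vertex below level $h$. That happens not only for an $h$-deep particle that ultimately returns to $\rt$, but also for an \emph{escaping} particle whose first contact with level $h$ is inside a finite side excursion rather than on its eventual escape end: it then drops back below level $h$ before climbing out for good. From that moment the two rotor configurations at levels $<h$ differ, so the walks of all later particles on $T$ and $T^h$ are no longer coupled, and your per-particle assertion ``an $h$-deep particle always stops at $S^h$ in $T^h$'' is unsupported---you cannot even be sure the later $T^h$-walk reaches level $h$ at all. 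The paper avoids this by invoking the Abelian property on the finite graph $T^{h+1}$: one may legally fire the $n$ particles first only until each lies in $S^h\cup\{\rt\}$, which deposits exactly $E_n(T^h,S^h,r^h)$ of them in $S^h$, and then continue only those; since by Abelianness the number ultimately reaching $S^{h+1}$ is $E_n(T^{h+1},S^{h+1},r^{h+1})$ regardless of firing order, the monotonicity in $h$ follows.

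The same issue undermines your ``equality for large $h$'' step. You claim that for $h$ large the coupling reproduces the $T$-process exactly, but the side-excursion phenomenon above can occur for \emph{every} $h$: e.g.\ if the escaping particle, just before reaching the $j$th vertex $x_j$ of its end, always makes a side trip to level $j{+}1$, then for every $h\geq 2$ its first visit to level $h$ is off the end and the configurations diverge at $x_{h-1}$. So one cannot simply choose $h$ large enough to make your coupling exact. The paper instead proves, by induction on $n$, that for each fixed vertex $v$ the number of visits to $v$ in $T^h$ eventually equals that in $T$; since $E_n$ is determined by the visit count at $\nx$ (via the rotor there), this yields the convergence without tracking whole excursions.
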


Lemma~\ref{trunc} is proved in \cite[Lemmas 18,19]{h-propp};
indeed it holds for arbitrary graphs, and in the more general
setting of rotor walks associated with Markov chains.  For the
reader's convenience we summarize the argument here.  The
convergence follows from the stronger statement that the number
of visits to any given vertex converges similarly, which is
proved by induction on $n$.  Monotonicity is a consequence of
the Abelian property for rotor walks on a finite graph with a
sink (see e.g.\ \cite{hlmppw}).

\section{Rotors pointing towards the root}
\label{s:down}

In this section we will prove Theorem \ref{multi}(ii).  We will prove a
version for finite trees and then appeal to Lemma~\ref{trunc}.

Consider a finite tree $T$ in which the root $\rt$ has exactly one child,
and let $S\not\ni\rt$ be a non-empty subset of the leaves of $T$. Construct
a directed graph $G=G(T,S)$ as follows. Replace each edge of $T$ not
incident to $S$ with two directed edges, one in each direction. Replace
each edge incident to $S$ with a single directed edge towards $S$. Add
directed edges from each vertex in $S$ to $\rt$. Note that if a particle
performs some walk on $T$ and is returned to $\rt$ whenever it reaches $S$,
then its trajectory is a (directed) path in $G$.

Consider any finite directed path $\pi$ in the graph $G$, starting and ending
at the same vertex.  Let $v\neq\rt$ be any vertex of the original tree $T$,
and let $u=v^{(0)}$ be its parent.  Let $n_v$ be the number of times the path
traverses the directed edge from $u$ to $v$, and $m_v$ the number of
traversals from $v$ to $u$.  Let $p_v$ be the probability that the simple
random walk on $T$ started at $v$ hits $u$ before $S$.  Define the
discrepancy
\[ \delta_v := p_v n_v -m_v. \]

\begin{lemma}\label{delta}
Fix a finite tree and a path $\pi$ as above, and let $v\not\in S\cup\{\rt\}$
be a vertex, with children $v^{(1)},\ldots,v^{(b)}$. Abbreviate $\delta_v$ to
$\delta$, and $\delta_{v^{(i)}}$ to $\delta_i$, and similarly for $n$, $m$,
and $p$.  We have
$$\delta=p \sum_{i=1}^b\big[\delta_i+(1-p_i)(n_i-m)\big].$$
\end{lemma}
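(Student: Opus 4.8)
The plan is to combine two elementary facts: conservation of traversals at $v$ for the closed path $\pi$, and a first-step identity for the hitting probabilities $p_w$. With these in hand the lemma reduces to bookkeeping.

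First I would record the flow identity at $v$. Since $\pi$ is a closed directed walk in $G$, at every vertex the total number of traversals of incoming arcs equals the total number of traversals of outgoing arcs. As $v\notin S\cup\{\rt\}$, its neighbours in $T$ are its parent $u\notin S$ and its children $v^{(1)},\dots,v^{(b)}$; the incoming arcs at $v$ are $u\to v$ (traversed $n=n_v$ times) together with $v^{(i)}\to v$ for each $i$ (traversed $m_i=m_{v^{(i)}}$ times, where $m_i=0$ in the degenerate case $v^{(i)}\in S$, for which only the arc $v\to v^{(i)}$ is present), and the outgoing arcs are $v\to u$ ($m=m_v$ times) and $v\to v^{(i)}$ ($n_i=n_{v^{(i)}}$ times). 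Balancing gives $n+\sum_{i=1}^b m_i=m+\sum_{i=1}^b n_i$, i.e.
$$\sum_{i=1}^b(n_i-m_i)=n-m.$$

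Next I would establish the corresponding relation among $p=p_v$ and $p_i=p_{v^{(i)}}$. Running the random walk from $v$ and conditioning on the first step: with probability $1/(b+1)$ it moves to $u$ and the event succeeds; with probability $1/(b+1)$ it moves to a child $v^{(i)}$, from where it returns to $v$ before hitting $S$ with probability exactly $p_i$, and, conditionally on this, the strong Markov property restores the situation at $v$; otherwise ($S$ is hit first, probability $1-p_i$) the event fails. Here one uses that from $v^{(i)}$ every sink not lying in the subtree rooted at $v^{(i)}$ lies beyond $v$, so "hit $v$ before $S$'' and "hit $v$ before the sinks of that subtree'' are the same event — this is what makes the recursion self-contained. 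Thus $p=\tfrac1{b+1}\bigl(1+p\sum_{i=1}^b p_i\bigr)$, which rearranges to
$$p\sum_{i=1}^b(1-p_i)=1-p.$$

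Finally I would expand the right-hand side of the claimed formula. Using $\delta_i=p_in_i-m_i$, a short computation gives $\delta_i+(1-p_i)(n_i-m)=(n_i-m_i)-m(1-p_i)$, whence
$$p\sum_{i=1}^b\bigl[\delta_i+(1-p_i)(n_i-m)\bigr]=p\sum_{i=1}^b(n_i-m_i)-pm\sum_{i=1}^b(1-p_i)=p(n-m)-m(1-p)=pn-m=\delta,$$
invoking the two displayed identities. I do not expect a genuine obstacle: both ingredients are routine, and the only points needing care are the bookkeeping when a child of $v$ lies in $S$ (where $m_i$, $p_i$, $\delta_i$ all vanish, consistently with the formula) and the observation — the crux of the first-step recursion for $p_v$ — that $p_{v^{(i)}}$, although defined through the global sink set $S$, coincides with the intrinsic return probability of the subtree rooted at $v^{(i)}$.
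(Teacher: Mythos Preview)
Your proof is correct and follows essentially the same route as the paper: both arguments combine the flow balance $\sum_i(n_i-m_i)=n-m$ at $v$ with a first-step identity for $p$ (the paper writes it as $p=1/(1+\sum_i(1-p_i))$, which is equivalent to your $p\sum_i(1-p_i)=1-p$), and then perform the same algebraic expansion of $\delta_i+(1-p_i)(n_i-m)$ to conclude. Your added remarks on the sink-child case and on why $p_{v^{(i)}}$ depends only on the subtree are sound and do not depart from the paper's approach.
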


\begin{proof}
First consider the random walk started at $v$. At each visit to $v$, the walk
either returns immediately to the parent with probability $1/(b+1)$, or
escapes to $S$ via $v^{(i)}$ with probability $(1-p_i)/(b+1)$, or otherwise
returns to $v$ without doing either.  Therefore the probability $p$ of
returning to the parent before escaping to $S$ is given by
\begin{equation}\label{p0}
p=\frac{1/(b+1)}{1/(b+1)+\sum_{i=1}^b (1-p_i)/(b+1)}=
\frac{1}{1+\sum_{i=1}^b (1-p_i)}.
\end{equation}

Turning to the path $\pi$, since the numbers of arrivals and departures at
$v$ are equal we have $n+\sum_{i=1}^b m_i= m+\sum_{i=1}^b n_i$, or
equivalently,
\begin{equation}\label{n-m}
\sum_{i=1}^b (n_i-m_i)=n-m.
\end{equation}

Now using the definition of $\delta_i$, followed by
\eqref{n-m}, \eqref{p0}, and the definition of $\delta$, we
obtain
\begin{multline*}
\sum_{i=1}^b \big[\delta_i+(1-p_i)(n_i-m)\big]
=\sum_{i=1}^b \big[ (n_i-m_i)-m(1-p_i)\big] \\
=\ n-m-m\sum_{i=1}^b(1-p_i) =n-m/p  =\delta/p,
\end{multline*}
as required.
\end{proof}

\begin{proof}[Proof of Theorem \ref{multi}(ii)]
Apply Lemma~\ref{delta} to a rotor walk on a finite tree $T$
with initial configuration $\down$, started at $\rt$, returned
to $\rt$ after every visit to $S$, and run up until some visit
to $\rt$.  In this case, considering the rotor at $v$ shows
that $n_i\geq m$, therefore the lemma gives $\delta\geq
\sum_{i=1}^b \delta_i$. For any $w\in S$ we have $p_w=m_w=0$,
therefore $\delta_w=0$.  Hence by applying the previous
inequality iteratively we deduce that $\delta_\nx\geq 0$, where
$\nx$ is the child of the root.  In other words, for a finite
tree, $E_n(T,S,\down)\geq \trans(T,S)\, n$, where
$\trans(T,S)=1-p_\nx$ is the probability that a random walk
started at $\rt$ hits $S$ before returning to $\rt$.

Now for an infinite tree $T$ we apply Lemma~\ref{trunc}.  By the above we
have for all $n$ and $h$,
$$E_n(T^h,S^h,\down)\geq \trans(T^h,S^h)\, n,$$
while
$$\trans(T^h,S^h)\searrow\trans(T) \quad\text{as }h\to\infty.$$
Hence the lemma gives $E_n(T,\down)\geq \trans(T) \, n$ as required.
\end{proof}

\section{Explosion formula}
\label{s:explosion}

We introduce a recursive formula for the escape numbers
$E_n(T,r)$, from which we will deduce Theorem \ref{multi}(i)
and many other results. It will be convenient to work with the
indicator $e_n=e_n(T,r):=\ind[\text{particle $n$ escapes to
infinity}]$, so that $E_n=\sum_{i=1}^n e_i$.  Let $e(T,r)$
denote the binary \df{escape sequence} $(e_1,e_2,\ldots)$.  We
sometimes abbreviate a sequence $(a_1,a_2,\ldots)$ to $a_1
a_2\cdots$.

For a tree $T$, let $\nx$ be the unique child of the root
$\rt$, and call $\nx$ the \df{base} of the tree.  For any
vertex $v\neq \rt$ of $T$, the \df{subtree} $T_v$ \df{based at
$v$} is defined to be the subgraph of $T$ induced by the parent
$v^{(0)}$ of $v$ together with $v$ and all its descendants. The
subtree $T_v$ has root $v^{(0)}$ and base $v$.  For a rotor
configuration $r$ on $T$, we write $r_v$ for its restriction to
$T_v$ (more precisely, to the non-root vertices of $T_v$).  If
$\nx$ has children $\nx^{(1)},\ldots,\nx^{(b)}$, we abbreviate
$T_{\nx^{(i)}}$ to $T_i$ and $r_{\nx^{(i)}}$ to $r_i$, and call
$T_1,\ldots,T_b$ the \df{principal branches} of $T$.

Similarly if $T$ is a finite tree with a set of sinks $S$, as in Section
\ref{s:trunc}, we write $S_v:=S\cap V(T_v)$ , and $S_i:=S_{\nx^{(i)}}$.  We
also write $e_n(T,S,r):=\ind[\text{particle $n$ escapes to $S$}]$, so again
$E_n=\sum_{i=1}^n e_i$.

Let $\N_+=\{1,2\ldots\}.$  For sequences in ${\N}^{\N_+}$ we denote addition
by $(a_1,a_2,\ldots)+(b_1,b_2,\ldots):=(a_1+b_1,a_2+b_2,\ldots)$, and define
the \df{shift} operator $\theta$ by
$$\theta(a_1,a_2,\ldots):=(0,a_1,a_2,\ldots)$$
and the \df{explosion} operator $\ex$ by
$$\ex(a_1,a_2,\ldots):=(1^{a_1},0,1^{a_2},0,\ldots)$$
where $1^k$ denotes a string of $k$ $1$s.  We define the \df{majorization}
order $\preceq$ on sequences by $(a_1,a_2,\ldots)\preceq(b_1,b_2,\ldots)$ if
and only if $\sum_1^n a_i\leq \sum_1^n b_i$ for all $n$.

\begin{thm}[Explosion formula]\label{explosion}\
\begin{mylist}
\item Let $T$ be a tree with base $\nx$ and principal branches
    $T_1,\ldots,T_b$, and fix a rotor configuration $r$.  Then
\begin{equation}\label{it}
e(T,r)=\ex\bigg(\sum_{i=1}^{r(\nx)} \theta e(T_i,r_i)+ \sum_{i=r(\nx)+1}^b
e(T_i,r_i)\bigg).
\end{equation}
\item If $T$ is a finite tree with a set of sinks $S$,
    \eqref{it} holds similarly for $e(T,S,r)$ and
    $e(T_i,S_i,r_i)$.
\item In the case of an infinite tree as in (i), the collection
    of escape sequences
    $(e(T_v,r_v))_{v\in V_\rt}$ is the unique maximal
    $\{0,1\}^{\N_+}$-valued solution to the system of equations
    obtained by applying \eqref{it} to each of the subtrees
$(T_v)_{v\in V_\rt}$ (in place of $T$).  Here maximality means that if
$(e'(v))_{v\in V_\rt}$ is another solution, then $e(T_v,r_v)\succeq e'(v)$
for all $v$.
\end{mylist}
\end{thm}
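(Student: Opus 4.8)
The plan is to prove the explosion formula~\eqref{it} directly for a single tree (part (i)), then transfer it to finite trees with sinks (part (ii)) essentially verbatim, and finally use these identities together with the truncation lemma (Lemma~\ref{trunc}) to establish the maximality characterization in part (iii).

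\medskip

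For part~(i), fix an infinite tree $T$ with base $\nx$ and principal branches $T_1,\dots,T_b$, and a rotor configuration $r$. The key structural observation is that a particle started at the root and stopped on return to the root either escapes, or returns to $\rt$; and in between it lives entirely in $\{\nx\}\cup T_1\cup\dots\cup T_b$. Each time the particle visits $\nx$, the rotor there advances one step in the cyclic order $\nx^{(0)},\nx^{(1)},\dots,\nx^{(b)}$; since the rotor starts pointing at $\nx^{(r(\nx))}$, the particle is sent on its first visit to $\nx^{(r(\nx)+1)}$, then $\nx^{(r(\nx)+2)}$, and so on cyclically, with a visit to $\nx^{(0)}=\rt$ (which ends that particle's walk, either as an escape of the previous excursion or a return) occurring once per full cycle of $b+1$. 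Thus the sequence of neighbours of $\nx$ to which successive visits are routed is the periodic pattern
\[
\nx^{(r(\nx)+1)},\dots,\nx^{(b)},\ \rt,\ \nx^{(1)},\dots,\nx^{(b)},\ \rt,\ \nx^{(1)},\dots
\]
I will make this precise via the \emph{Abelian property} of rotor walks on finite truncations (Lemma~\ref{trunc}): running $n$ particles from $\rt$ and recording, for each branch $T_i$, the subsequence of its sub-excursions, I claim that branch $T_i$ sees exactly the escape sequence of $(T_i,r_i)$, possibly preceded by a shift $\theta$ when $i\le r(\nx)$ (because such branches are not visited during the first partial cycle). Concretely, if a particle escapes to $T_i$, that contributes a $1$ to the $i$th branch sequence; the pattern of which branch each consecutive sub-excursion enters is exactly the periodic pattern above, and the global escape sequence $e(T,r)$ is obtained by concatenating, between consecutive returns to $\rt$, the blocks $1^{a_1},0,1^{a_2},0,\dots$ where $a_j$ counts the escapes occurring in the $j$th full cycle. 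Matching this bookkeeping against the definitions of $\theta$ and $\ex$ gives precisely~\eqref{it}. The delicate point — and the step I expect to be the main obstacle — is justifying that the escape sub-sequence seen by branch $T_i$ really is $e(T_i,r_i)$ (appropriately shifted): one must check that the excursions into $T_i$ are genuinely independent restarts of the $T_i$-rotor-walk process with the correct initial rotors, which is exactly where the Abelian property and a careful induction on the number of particles are needed. This is where the bulk of the careful argument lies.

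\medskip

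Part~(ii) requires no new idea: the same excursion decomposition applies to a finite tree $T$ with sink set $S$, with ``escape'' meaning ``reaching $S$'' and with the convention that a particle reaching $S_i\subseteq V(T_i)$ is treated as an escape of the sub-excursion in $T_i$. The routing pattern at $\nx$ is identical, and~\eqref{it} for $e(T,S,r)$ in terms of the $e(T_i,S_i,r_i)$ follows line by line.

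\medskip

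For part~(iii), first note that~\eqref{it} applied simultaneously to all subtrees $(T_v)_{v\in V_\rt}$ shows that $(e(T_v,r_v))_{v}$ \emph{is} a $\{0,1\}^{\N_+}$-valued solution of the system. For maximality, let $(e'(v))_v$ be any other solution. The strategy is a truncation/limiting argument: for each height $h$, the finite system corresponding to $T^h$ with sinks $S^h$ (using part (ii)) has $(e(T^h_v,S^h_v,r^h_v))_v$ as a solution, and since the sink-values are forced to be the all-ones or all-zeros boundary data, one shows by downward induction from the leaves that this finite solution \emph{dominates} (in the $\preceq$ order) the restriction of any other solution $e'$ to $T^h$ — the operators $\ex$ and $\theta$ are monotone for $\preceq$, and summing preserves $\preceq$, so monotonicity propagates up the tree. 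Then Lemma~\ref{trunc} gives $e(T^h_v,S^h_v,r^h_v)\searrow e(T_v,r_v)$ as $h\to\infty$; since any prefix stabilizes, for each $v$ and each $n$ we get $\sum_{i\le n} e'(v)_i \le \sum_{i\le n} e(T^h_v,S^h_v,r^h_v)_i = \sum_{i\le n} e(T_v,r_v)_i$ for $h$ large, i.e.\ $e(T_v,r_v)\succeq e'(v)$, as claimed. The one subtlety here is that $e'$ need not itself arise from any rotor walk, so one cannot invoke the dynamics for $e'$; one can only use that it satisfies the fixed-point equations, together with the monotonicity of $\ex$, $\theta$, and addition under $\preceq$ — verifying these monotonicity properties is a short but essential lemma.
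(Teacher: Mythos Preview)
Your approach is essentially the same as the paper's, and the proposal is correct in outline. However, you are overcomplicating part~(i): the paper treats it as a direct elementary observation, with no need for the Abelian property or induction on the number of particles. The point you flag as ``the main obstacle'' is immediate from the definitions: when the rotor at $\nx$ sends the particle to $\nx^{(i)}$, the particle performs a rotor walk on $T_i$ (started at its base, with the current rotor configuration there) until it either escapes or returns to $\nx$; the rotors inside $T_i$ are untouched between successive entries, so the $j$th entry into $T_i$ is exactly the $j$th particle in the process defining $e(T_i,r_i)$. No further justification is needed.

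For part~(iii) your strategy matches the paper's exactly: apply~(ii) to the truncated tree $T^h$, use the boundary condition $e(T^h_s)=111\cdots$ at each sink $s\in S^h$ (your ``or all-zeros'' is a slip---the boundary is all-ones, and any $\{0,1\}$-valued $e'(v)$ is automatically $\preceq 111\cdots$), propagate the inequality $e'(v)\preceq e(T^h_v)$ toward the root using monotonicity of $\ex$ under $\preceq$, and then pass to the limit via Lemma~\ref{trunc}. Your identification of the monotonicity of $\ex$, $\theta$, and addition as the key lemma is correct.
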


We remark that the system of equations in (iii) above does not
generally have a unique solution; for example $e(T_v,r_v)\equiv
(0,0,\ldots)$ is always a solution. It is not {\em a priori}
obvious that the system of equations has a maximal solution in
the sense asserted in (iii) (as opposed to the weaker sense of
being majorized by no other solution).

We will use Theorem \ref{explosion} to prove Theorem
\ref{multi}(ii) and Theorem~\ref{log}, as well as the following
propositions, of which the first two will be used later, and
all are of independent interest.

\begin{prop}[Finite modification]\label{lip}
If $r$ and $r'$ are rotor configurations differing at only finitely many
vertices, then for all $n$,
$$|E_n(T,r)-E_n(T,r')|\leq \sum_{v\in V_\rt} |r(v)-r'(v)| .$$
\end{prop}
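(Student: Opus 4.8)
The plan is to use the explosion formula (Theorem~\ref{explosion}), after reducing to the case where $r$ and $r'$ differ at a single vertex. Since each coordinate lies in the integer interval $\{0,\ldots,b(v)\}$, we may interpolate between $r$ and $r'$ by a chain of valid rotor configurations $r=r_0,r_1,\ldots,r_N=r'$, with $N=\sum_{v\in V_\rt}|r(v)-r'(v)|$, in which $r_t$ and $r_{t+1}$ agree everywhere except at one vertex, where they differ by exactly $1$. By the triangle inequality it then suffices to show that $|E_n(T,r)-E_n(T,r')|\le 1$ whenever $r,r'$ differ at a single vertex $w$ with $|r(w)-r'(w)|=1$; by symmetry we may assume $r'(w)=r(w)+1$.

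The combinatorial core is a lemma on the explosion operator. For $a\in\N^{\N_+}$ write $A_j:=\sum_{i=1}^j a_i$ (with $A_0:=0$) and $F_a(n):=\sum_{i=1}^n(\ex a)_i$. Counting the zeros that $\ex$ inserts --- the separator following the $j$-th block of $\ex a$ sits at position $j+A_j$ --- gives the explicit formula $F_a(n)=n-\max\{j\ge 0:\ j+A_j\le n\}$. Since $j\mapsto j+A_j$ is strictly increasing, one reads off from this formula that if two sequences $a,a'$ satisfy $|A_m-A'_m|\le 1$ for every $m$, then the corresponding maxima differ by at most $1$, and hence $|F_a(n)-F_{a'}(n)|\le 1$ for every $n$. (Concretely: if the maximum for $a$ at argument $n$ equals $J$, then $(J+1)+A_{J+1}>n$, whence $(J+2)+A'_{J+2}\ge (J+1)+A_{J+1}>n$ using $A'_{J+2}\ge A_{J+2}-1\ge A_{J+1}-1$; this rules out $J+2$ for $a'$, and the symmetric estimate rules out $J-2$.)

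With this lemma in hand, I would establish the single-vertex bound by induction on the level of $w$, the induction ranging over all trees simultaneously. In the base case $w=\nx$: replacing $r(\nx)=k$ by $k+1$ in \eqref{it} leaves every branch escape sequence $e(T_i,r_i)$ unchanged and merely replaces the summand $e(T_{k+1},r_{k+1})$ by $\theta e(T_{k+1},r_{k+1})$ inside $\ex$; the partial sums of the argument of $\ex$ therefore change by $\bigl(e(T_{k+1},r_{k+1})\bigr)_m\in\{0,1\}$, so the lemma yields the bound. In the inductive step (level of $w$ at least $2$), $w$ lies in a principal branch $T_j$ at one lower level, and $r_j,r'_j$ differ only at $w$, by $1$; the inductive hypothesis applied to $T_j$ gives $\bigl|\sum_{i=1}^m\bigl(e(T_j,r_j)-e(T_j,r'_j)\bigr)_i\bigr|\le 1$ for all $m$. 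Since $r(\nx)=r'(\nx)$ and the other branch sequences are unchanged, passing from $r$ to $r'$ alters the argument of $\ex$ in \eqref{it} only by a (possibly $\theta$-shifted) copy of $e(T_j,r_j)-e(T_j,r'_j)$, whose partial sums have absolute value at most $1$, and the lemma applies once more.

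The main obstacle is the lemma on $\ex$: everything hinges on the fact that a unit perturbation of the partial sums of the input produces only a unit perturbation of the partial sums of the output, which is not evident a priori because $\ex$ stretches the position axis nonlinearly. The ``separator-counting'' formula for $F_a$ is the device that makes this transparent; trying instead to compare the escape sequences directly would be considerably messier. The two surrounding steps --- the telescoping reduction to a single vertex and the induction on level --- are routine once the lemma and the explosion formula are available.
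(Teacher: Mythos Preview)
Your proof is correct and follows essentially the same approach as the paper's: reduce to a single rotor increment, establish that the explosion operator $\ex$ preserves the property ``partial sums differ by at most one,'' and then propagate this from the modified vertex up to the root via the explosion formula. The paper states the key lemma in a one-sided form (the condition $\sum_{i=1}^n(a_i-a'_i)\in\{0,1\}$ is preserved by $\ex$) and calls its verification ``straightforward to check,'' whereas you supply the separator-counting formula $F_a(n)=n-\max\{j:j+A_j\le n\}$ and argue the two-sided version; both suffice for the proposition.
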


\begin{prop}[Subtrees]\label{subgraph}
  If the tree $T'$ is a subgraph of the tree $T$, with the same root, then
  $e(T',\down) \preceq e(T,\down)$ (i.e.\ for all $n$ we have
  $E_n(T',\down)\leq E_n(T,\down)$).
\end{prop}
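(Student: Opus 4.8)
The plan is to establish the statement first for finite trees with a sink set, by induction on the tree via the explosion formula (Theorem~\ref{explosion}(ii)), and then pass to infinite trees by truncation (Lemma~\ref{trunc}). The first thing I would set up is that the operators occurring in the explosion formula are monotone for the majorization order $\preceq$: coordinatewise addition and the shift $\theta$ obviously preserve $\preceq$ (and $a+c\succeq a$ for any nonnegative sequence $c$), and $\ex$ preserves it too — the position in $\ex(a)$ at which the partial sum first reaches a value $s$ equals $s+k_a(s)-1$, where $k_a(s):=\min\{j:a_1+\cdots+a_j\ge s\}$, and $a\preceq a'$ gives $k_a(s)\ge k_{a'}(s)$ for every $s$, whence the partial sums of $\ex(a)$ are dominated by those of $\ex(a')$. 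I would also note that since $\down(\nx)=0$, the explosion formula reduces to $e(T,\down)=\ex\bigl(\sum_{i=1}^{b}e(T_i,\down)\bigr)$, which is symmetric in the principal branches, so the cyclic ordering of branches plays no role here.

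Because $T'$ is a tree with root $\rt$ and a subgraph of $T$, it must equal $T[V']$ for a set $V'\ni\rt$ closed under taking parents (the unique $T$-path from any vertex of $T'$ to $\rt$ must lie in $T'$, which forces both the closure property and the absence of missing edges); moreover $\nx\in V'$ since $\rt$ has degree $1$, and the parent map, the levels, and $\down$ all agree on $T'$ with those on $T$. In the finite setting the claim becomes: if $U$ is a finite tree with sink set $S$ and $U'=U[V']$ with $V'$ as above and induced sink set $S\cap V'$, then $e(U',S\cap V',\down)\preceq e(U,S,\down)$, which I would prove by induction on $|V(U)|$. If the base of $U$ is a sink then $U'=U$ and the two sides agree; if the base is a non-sink leaf, or $U'$ has empty sink set, the left side is $(0,0,\dots)$ and the claim is trivial. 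Otherwise the principal branches $U_1,\dots,U_b$ of $U$ each have strictly fewer vertices than $U$, the principal branches of $U'$ are $\{U'_j:j\in J\}$ for some $J\subseteq\{1,\dots,b\}$ with each $U'_j$ a pruning of $U_j$ carrying the induced sink set, and applying the inductive hypothesis to each $U'_j$, then summing over $J$, then adjoining the remaining nonnegative sequences $e(U_j,\cdot,\down)$ for $j\notin J$, then applying $\ex$, the monotonicity recorded above yields $e(U',S\cap V',\down)=\ex\bigl(\sum_{j\in J}e(U'_j,\cdot,\down)\bigr)\preceq\ex\bigl(\sum_{j=1}^{b}e(U_j,\cdot,\down)\bigr)=e(U,S,\down)$, closing the induction.

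Finally, for infinite $T'\subseteq T$ I would fix a level $h$ and apply the finite claim to $U=T^h$ and $U'=(T')^h$: the latter is a pruning of the former, and its sink set — the level-$h$ vertices of $T'$ — is exactly the set of level-$h$ vertices of $T$ lying in $(T')^h$ (levels agree on common vertices), so the induced-sink hypothesis holds. This gives $E_n((T')^h,\cdot,\down)\le E_n(T^h,S^h,\down)$ for all $n$, and by Lemma~\ref{trunc} both sides stabilize to their infinite-tree values for $h$ large (depending on $n$), so $E_n(T',\down)\le E_n(T,\down)$ for all $n$, i.e.\ $e(T',\down)\preceq e(T,\down)$. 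I expect the real work to be bookkeeping rather than substance: checking the compatibility of pruning with truncation, handling the degenerate cases (empty sink sets, or internal vertices turned into dead leaves by pruning), and stating the $\ex$-monotonicity precisely. As an alternative that avoids truncation, if the construction of the maximal solution in Theorem~\ref{explosion}(iii) as the decreasing limit of the iterates of the explosion map starting from the all-ones configuration is available, one may instead compare those iterates for $T'$ and for $T$ branch by branch using the same monotonicity, obtaining the result directly on the infinite tree.
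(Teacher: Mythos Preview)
Your argument is correct, but the paper proceeds quite differently and much more briefly. Instead of inducting on finite trees via the explosion formula, the paper encodes $T'$ inside $T$ by a rotor configuration: set $r(v)=0$ for $v\in V(T')$ and $r(v)=b(v)$ for $v\notin V(T')$. By Theorem~\ref{multi}(iii), any excursion into a subtree outside $T'$ returns to its entry point without escaping, so the rotor walk on $(T,r)$ is step-for-step identical (on $T'$) to the rotor walk on $(T',\down)$, giving $e(T,r)=e(T',\down)$. Since $r\ge\down$ coordinatewise, Theorem~\ref{multi}(i) yields $E_n(T',\down)=E_n(T,r)\le E_n(T,\down)$.

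Compared with this, your route is more hands-on: you rebuild the needed monotonicity of $\ex$ and of the branch sum from scratch and then truncate. This has the virtue of being self-contained (you do not invoke Theorem~\ref{multi}(i) or (iii)), and the explicit statement that $\ex$ preserves $\preceq$ is a useful lemma in its own right. The cost is the extra bookkeeping you anticipated --- the empty-sink-set case, non-sink leaves created by pruning, and the formal extension of $E_n(T,S,r)$ to $S=\emptyset$ (which the paper only defines for nonempty $S$). None of these is a real obstacle, but the paper's trick of ``turning off'' the deleted branches with the configuration $\mathbf{b}$ sidesteps all of it in two lines.
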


\begin{prop}[Rotor orders]\label{rot-mech}
If $T,T'$ have the same underlying graph, but different cyclic
orders of neighbours, and $r$ is a rotor configuration
satisfying $r(v)\in\{0,b(v)\}$ for each $v$ (such as $\down$),
then $E_n(T,r) = E_n(T',r)$ for all $n$.
\end{prop}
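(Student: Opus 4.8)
The plan is to read this off from the explosion formula (Theorem~\ref{explosion}), exploiting the fact that the hypothesis $r(v)\in\{0,b(v)\}$ renders the recursion insensitive to the ordering of children. Concretely, fix a rotor configuration $r$ with $r(v)\in\{0,b(v)\}$ for every $v$, and consider a subtree $T_v$ with base $v$ and principal branches $T_{v^{(1)}},\ldots,T_{v^{(b(v))}}$. Since the shift $\theta$ is additive, specializing \eqref{it} to $r(v)\in\{0,b(v)\}$ gives
$$e(T_v,r_v)=\begin{cases}\ex\bigl(\sum_{i=1}^{b(v)} e(T_{v^{(i)}},r_{v^{(i)}})\bigr), & r(v)=0,\\[1mm] \ex\bigl(\theta\sum_{i=1}^{b(v)} e(T_{v^{(i)}},r_{v^{(i)}})\bigr), & r(v)=b(v).\end{cases}$$
In both cases the right-hand side is a symmetric function of the \emph{multiset} $\{e(T_{v^{(i)}},r_{v^{(i)}})\}_i$, so the labelling of the children---equivalently, the cyclic order at $v$---plays no role; note also that each restriction $r_{v^{(i)}}$ again satisfies the hypothesis.

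Next I would invoke the maximality characterization, Theorem~\ref{explosion}(iii): the collection $(e(T_v,r_v))_{v\in V_\rt}$ is the unique maximal $\{0,1\}^{\N_+}$-valued solution of the system obtained by writing down the equation above for every subtree $T_v$. Under the hypothesis $r(v)\in\{0,b(v)\}$ throughout, this system is determined entirely by the rooted-tree structure (the parent-child relation, which fixes each $b(v)$) together with the sets $\{v:r(v)=0\}$ and $\{v:r(v)=b(v)\}$; it makes no reference to the cyclic orders. Since $T$ and $T'$ share the same underlying graph, hence the same rooted-tree structure, and the same $r$ is used for both, the two systems of equations coincide verbatim, and so do their unique maximal solutions. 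Thus $e(T_v,r_v)=e(T'_v,r_v)$ for all $v$; taking $v=\nx$, for which $T_\nx=T$ and $r_\nx=r$, gives $e(T,r)=e(T',r)$ and hence $E_n(T,r)=E_n(T',r)$ for all $n$.

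This is essentially bookkeeping once the explosion formula is available, so I anticipate no serious obstacle; the one point needing care is the meaning of ``the same configuration $r$'' for two trees with different cyclic orders. Under the hypothesis, $r(v)$ encodes only the intrinsic information that the rotor at $v$ currently points to the parent ($r(v)=0$) or will next point to the parent ($r(v)=b(v)$), and this is exactly what the symmetrized recursion above depends on. If one prefers to bypass Theorem~\ref{explosion}(iii), the same conclusion follows by truncation: Theorem~\ref{explosion}(ii) and the above specialization of \eqref{it} give the claim for finite trees with sinks by induction on the size of the tree, and Lemma~\ref{trunc} then passes to the limit.
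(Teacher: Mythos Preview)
Your argument is correct and rests on exactly the same observation as the paper: when $r(\nx)\in\{0,b(\nx)\}$ the right side of \eqref{it} is symmetric in the principal branches, so the recursion is blind to the cyclic order. The paper then iterates this for finite trees and appeals to Lemma~\ref{trunc}, which is precisely the alternative route you sketch at the end; your primary route via the maximality clause Theorem~\ref{explosion}(iii) is a minor variation that reaches the same conclusion without passing through truncation.
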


\begin{prop}[Non-uniqueness]\label{non-uniq}
There exist uncountably many distinct trees with the same
escape sequence for initial configuration $\down$.
\end{prop}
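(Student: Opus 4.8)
The plan is to produce an explicit uncountable family of one-ended trees and to check that each of them has the escape sequence $(1,0,0,\ldots)$ for initial configuration $\down$; the idea is that one-endedness renders the escape sequence both trivial and blind to the ``decorations'' of the tree, while still leaving room for uncountably many non-isomorphic examples.

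First I would determine the escape sequence of an arbitrary one-ended tree. With the configuration $\down$ we have $r(v)=0$ at every vertex, so the requirement ``$x_{i+1}=(x_i)^{(k)}$ with $r(x_i)<k$'' in the definition of a live path becomes simply ``$k\geq 1$'', which is automatic since $x_{i+1}$ is a child of $x_i$. Hence, for $\down$, every end of $T$ is live, and Proposition~\ref{live-ends} gives $E_\infty(T,\down)=(\text{number of ends of }T)$. If $T$ has exactly one end this number is $1$, so precisely one particle escapes in total; and it is particle $1$, since after its forced first move this particle sits at the base $\nx$, from which the tail of the unique end is a live path, so the argument establishing the first claim in the proof of Proposition~\ref{live-ends} shows that particle $1$ escapes without ever returning to $\rt$. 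Therefore $e(T,\down)=(1,0,0,\ldots)$ for every tree $T$ with root of degree $1$ and exactly one end.

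Next I would exhibit the family. For each $\omega=(\omega_1,\omega_2,\ldots)\in\{0,1\}^{\N_+}$ let $T_\omega$ consist of the ray $\rt=u_0,u_1,u_2,\ldots$ together with, for every $i\geq 1$ with $\omega_i=1$, one additional pendant leaf attached at $u_i$; fix an arbitrary cyclic order at each vertex, which plays no role here by Proposition~\ref{rot-mech}. Each $T_\omega$ is an infinite tree whose root $u_0$ has degree $1$ and which has exactly one end, because any infinite sequence of vertices from $\rt$, each the parent of the next, must be the ray itself (a pendant leaf has no children). So by the first step all the $T_\omega$ share the escape sequence $(1,0,0,\ldots)$. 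It remains to see that they are pairwise non-isomorphic. In a one-ended tree the spine -- the unique infinite self-avoiding path from the root -- is canonical, so any root-preserving isomorphism $\phi\colon T_\omega\to T_{\omega'}$ must carry the spine of $T_\omega$ onto that of $T_{\omega'}$; being distance-preserving from the (fixed) root, $\phi$ then satisfies $\phi(u_i)=u_i'$ for every $i$, whence comparing degrees gives $2+\omega_i=2+\omega_i'$ for every $i$, i.e.\ $\omega=\omega'$. Since $\{0,1\}^{\N_+}$ is uncountable, the proposition follows.

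I do not expect a serious obstacle. The two places to be careful are: confirming that the single escape is made by particle $1$ rather than some later particle, which is exactly what the first claim in the proof of Proposition~\ref{live-ends} yields once the particle reaches the base $\nx$; and the non-isomorphism step, whose crux is that one-endedness promotes the spine to a canonical subtree that an isomorphism must match up vertex by vertex (and hence level by level). One might remark that the $T_\omega$ differ from the bare ray only by finite subtrees, which -- as noted in the introduction -- may be pruned without changing the rotor walk; obtaining an uncountable family no two of whose members are related by such a finite modification would require a substantially different construction, which the stated proposition does not require.
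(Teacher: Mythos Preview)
Your proof is correct, and takes a genuinely different route from the paper's. You exploit one-ended trees decorated with pendant leaves and read off the common escape sequence $(1,0,0,\ldots)$ directly from Proposition~\ref{live-ends}; the paper instead builds trees with infinitely many ends by joining two ``thinned combs'' $T(a)$ and $T(b)$ (a ray with further rays attached at prescribed positions), computes $e(T(a),\down)=1\,0^{a_1}\,1\,0^{a_2}\,\cdots$, and uses the explosion formula to show that uncountably many choices of the unordered pair $\{a,b\}$ give the same sum $e(T(a),\down)+e(T(b),\down)$ and hence the same escape sequence for the joined tree. Your argument is shorter and avoids the explosion machinery entirely, but---as you yourself note---it leans on a construction the introduction flags as degenerate: all your $T_\omega$ collapse to the bare ray after removing vertices with only finitely many descendants. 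The paper's trees, by contrast, remain pairwise non-isomorphic after any such pruning and share a nontrivial escape sequence, so its version of the result carries more content even though the stated proposition does not demand it.
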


For sequences $a,b$ we write $a\prec b$ if $a\preceq b$ but
$a\neq b$.

\begin{prop}[Self-majorization]\label{self-maj}
Let $T$ be any infinite tree and let $e=e(T,\down)$.  For any
$n>0$ we have the strict majorization $e\succ
(e_{n+1},e_{n+2},\ldots)$.
\end{prop}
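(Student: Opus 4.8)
The plan is to use the explosion formula (Theorem \ref{explosion}) together with the fact that $\down$ is preserved in a suitable sense under running a single particle. First I would observe that the escape sequence $e = e(T,\down)$ records the escapes of particles $1,2,\ldots$, and that the shifted sequence $(e_{n+1},e_{n+2},\ldots)$ is the escape sequence one obtains if one starts counting only from particle $n+1$ onward --- that is, after $n$ particles have already been run. So the claim $e \succ (e_{n+1}, e_{n+2}, \ldots)$ is really a statement comparing the rotor walk started from the initial configuration $\down$ with the rotor walk started from the configuration $r_n$ reached after $n$ particles have been run (with no escapes necessarily being relevant). Concretely, $E_k(T, r_n) = E_{n+k}(T,\down) - E_n(T,\down)$ for all $k \geq 0$, so what must be shown is $E_k(T,\down) \geq E_{n+k}(T,\down) - E_n(T,\down)$ for all $k$, with strict inequality for some $k$.

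The key structural input is monotonicity, Theorem \ref{multi}(i): if $r_n(v) \leq \down(v) = 0$ for all $v$ --- which is automatic since $0$ is the minimum value --- then $E_k(T,\down) \geq E_k(T, r_n)$. Wait, that has the inequality the wrong way: Theorem \ref{multi}(i) says larger rotor index gives fewer escapes, so since $r_n(v) \geq 0 = \down(v)$, we get $E_k(T, r_n) \leq E_k(T,\down)$, which is exactly $E_{n+k}(T,\down) - E_n(T,\down) \leq E_k(T,\down)$. This establishes the weak majorization $(e_{n+1},e_{n+2},\ldots) \preceq e$ immediately. The remaining work is to promote this to strict majorization $\prec$, i.e.\ to exhibit some $k$ for which the inequality is strict, equivalently some $k$ with $E_k(T,r_n) < E_k(T,\down)$.

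For strictness I would argue that running even one particle from $\down$ genuinely moves at least one rotor off of $\down$ (the child $\nx$ of the root is visited and its rotor is incremented), so $r_1 \neq \down$ and in fact $r_1(\nx) \geq 1 > 0 = \down(\nx)$; hence $r_n(\nx) \geq 1$ for all $n \geq 1$ as well (the rotor at $\nx$ never returns to $0$ once it has left, since to return it would have to complete a full rotation, which requires the particle to be sent back to the root from $\nx$ --- but from the configuration $\down$ restricted to the base branch the relevant bookkeeping forbids... actually I should be careful here). The cleanest route to strictness is probably via the explosion formula applied to the base: by \eqref{it}, $e(T,\down) = \ex\big(\sum_{i=1}^b e(T_i, \down_i)\big)$, and similarly the escape sequence of the rotor walk after $n$ steps is $\ex$ applied to a sequence that is pointwise $\preceq \sum_{i=1}^b e(T_i,\down_i)$ but shifted in the first coordinate because $r_n(\nx) \geq 1$ means the first visits to some principal branches now cost an extra return. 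Since $\ex$ and $\theta$ are monotone for $\preceq$ and $\theta$ strictly decreases any nonzero sequence in the $\preceq$ order, the extra shift produces a strict gap provided at least one principal branch has a nonzero escape sequence --- and if every principal branch has escape sequence $(0,0,\ldots)$ then $E_k(T,\down) = 0$ for all $k$ and the proposition is trivial (both sides are the zero sequence, but then $e = (0,0,\ldots)$ and the asserted strict majorization fails!). So I must handle the recurrent case $\trans = 0$ separately.

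The main obstacle, then, is the recurrent case. When $E_\infty(T,\down) = 0$ the statement $e \succ (e_{n+1},\ldots)$ with $e = (0,0,\ldots)$ is false as literally stated, so either the proposition implicitly assumes transience, or "$\succ$" here is being used in a context where $e$ is known to be nonzero --- rereading, Proposition \ref{self-maj} says "Let $T$ be any infinite tree", so I suspect the intended reading is that when $E_\infty = 0$ the claim is vacuous or the proposition is understood modulo that case; more likely the paper intends $T$ transient, or intends the comparison only when $e \neq 0$. In writing the proof I would state this caveat, then assume $e \neq (0,0,\ldots)$, fix $n$, and run the argument above: weak majorization from Theorem \ref{multi}(i), and strictness by locating the first escape, say particle $m$ escapes; then for $k$ large enough $E_{n+k}(T,\down) - E_n(T,\down)$ misses at least the escapes contributed within the first $n$ particles' "territory," and a careful application of the explosion formula --- tracking how the incremented base rotor $r_n(\nx) \geq 1$ forces an extra $\theta$ on at least one live principal branch --- yields $E_k(T,\down) > E_k(T,r_n)$ for all sufficiently large $k$. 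I expect the bookkeeping in this last step (matching up "which escape is lost" precisely) to be the fiddly part, best done by induction on the level using \eqref{it}.
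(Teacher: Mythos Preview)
Your weak-majorization step via Theorem~\ref{multi}(i) is correct and matches the paper exactly.  The difficulties are all in the strictness argument, and there your proposal contains two genuine gaps.

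First, your worry about the recurrent case is unfounded, and it stems from missing a basic fact: with the configuration $\down$ (and under the standing convention, stated just before the paper's proof, that every vertex has at least one child), the \emph{first particle always escapes}.  Indeed, starting from $\down$ the rotor at each freshly visited vertex is incremented from $0$ to $1$ and sends the particle to the first child, so the particle moves monotonically away from the root.  Thus $e_1=1$ regardless of whether $\trans=0$, and $e$ is never the zero sequence.  (On the other hand $e$ always contains a $0$, since the rotor at $\nx$ eventually points back to $\rt$.)  So there is no caveat needed; the proposition holds as stated.

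Second, your proposed route to strictness via ``$r_n(\nx)\geq 1$'' does not work: after the base rotor has completed a full cycle, $r_n(\nx)$ can very well equal $0$ again.  The paper avoids this by a different two-step argument.  For $n=1$ it observes that equality $e=(e_2,e_3,\ldots)$ would force $e$ to be constant, which is impossible since $e_1=1$ but $e$ contains a $0$.  For general $n$, the paper locates a vertex $v$ that has been visited \emph{exactly once} after $n$ particles (such a vertex exists along the escape path of particle $1$, sufficiently deep; this is extracted from the proof of Proposition~\ref{live-ends}).  At that vertex the subtree $T_v$ has had exactly one particle pass through, so the $n=1$ case gives $e(T_v,r_v)\prec e(T_v,\down_v)$ strictly.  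At every other vertex the weak inequality holds by monotonicity.  One then checks that the explosion map in \eqref{it} is \emph{strictly} monotone in the sense that a strict majorization in one principal branch (with weak majorization elsewhere and $r(\nx)\geq s(\nx)$) yields a strict majorization for the whole tree, and propagates the strict inequality from $v$ up to the root.  Your sketch gestures at ``tracking which escape is lost'' via an induction on level, which is morally this last step, but you never identified the right place to anchor the strict inequality; the key missing idea is the vertex-visited-exactly-once trick.
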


The above proposition implies in particular that $e(T,\down)$
cannot be a periodic sequence.  Periodicity is possible for
other rotor configurations (for example the constant
configuration $\mathbf{1}$ on the binary tree has
$e=101010\cdots$, as noted in \cite{landau-levine}). Not every
sequence $e$ satisfying the self-majorization condition of
Proposition~\ref{self-maj} is equal to $e(T,\down)$ for some
tree $T$. For example, Theorem \ref{explosion} and Proposition
\ref{self-maj} may be used to check that no sequence starting
$110010101010\cdots (=\ex(201111\cdots ))$ can arise in this
way. It is an open problem to characterize the set of all
possible sequences $e(T,\down)$ as $T$ varies.

Now we turn to the proofs.

\begin{proof}[Proof of Theorem \ref{explosion}]
We suppress the rotor configuration in the notation $e(T_v,r_v)$ for escape
sequences, since it will always be the appropriate restriction.

Part (i) is an elementary consequence of the behaviour of the rotor at
$\nx$. Suppose first that $r(\nx)=0$, so that $\nx$ will send the particle
to each of its children before returning it to $\rt$. When it is sent to
the $i$th child, either the particle escapes (and then is returned to $\nx$
via $\rt$), or it is returned to $\nx$ by the child, according to the value
of $e_1(T_i)$. In either case, the rotor at $\nx$ is then incremented and
we proceed to the next child. Finally, the rotor points to $\rt$, and we
get a return to $\rt$. Thus $e(T)$ starts with the sequence $1^k 0$, where
$k=\sum_{i=1}^b e_1(T_i)$. At this point the rotor is in its initial
position, and each principal branch has received a particle once from
$\nx$, so the process repeats using $e_2(T_i)$, and so on, and we deduce
$e(T)=\ex \sum_{i=1}^{b} e(T_i)$ as required in this case. The case of
general $r(\nx)$ is similar, except that we must first apply the shift
$\theta$ to the sequences $e(T_i)$ of those branches to which the rotor
will not point before pointing to $\rt$, because they do not contribute any
escapes to the first block of 1s of $e(T)$.

An identical argument to the above gives the finite case (ii). To prove
(iii), consider the truncated system $(T^h,S^h,r^h)$ (as defined in Section
\ref{s:trunc}). Applying (ii) to every subtree yields a system of equations
giving the escape sequence of each subtree in terms of those of its principal
branches.  Together with the boundary condition that $e(T^h_s)=111\cdots$ for
every $s\in S^h$, these equations determine all the escape sequences.  Note
that $\ex$ is an increasing function with respect to the order $\preceq$ (on
both the domain and the range).  Therefore if $e'$ is another
$\{0,1\}^{\N_+}$-valued solution to the system of equations, proceeding
iteratively starting from the sinks, we deduce that $e'(v)\preceq e(T^h_v)$
for each subtree. Finally we apply Lemma~\ref{trunc} to deduce that the same
conclusion holds for the infinite tree.
\end{proof}

\begin{proof}[Proof of Theorem \ref{multi}(i)]
For a finite tree with sinks, the required monotonicity follows from the
observation that the right side of \eqref{it} is decreasing in $r(\nx)$ and
increasing in $e(T_i)$, where sequences are ordered by majorization.  To
deduce the infinite case, apply Lemma~\ref{trunc}.
\end{proof}

\begin{proof}[Proof of Proposition \ref{lip}]
The main observation, which is straightforward to check, is
that if $a,a'$ is a pair of integer sequences satisfying
$\sum_{i=1}^n (a_i-a'_i) \in \{0,1\}$ for all $n$, then the
pair $\ex(a),\ex(a')$ satisfies the same condition.

Suppose first that $r$ and $r'$ differ only at a single vertex
$v$, where $r'(v)=r(v)+1$. In this case the explosion formula
\eqref{it} applied to $T_v$ gives $e(T_v,r_v)=\ex(a+e)$ and
$e(T_v,r'_v)=\ex(a+\theta e)$ for a non-negative sequence $a$
and a binary sequence $e$.  Hence by the observation above,
$E_n(T_v,r_v) - E_n(T_v,r'_v) \in \{0,1\}$ for all $n$.  If the
last statement holds for some vertex $u$ (in place of $v$),
then by the above observation again, it also holds for its
parent $u^{(0)}$.  Applying this iteratively shows that it
holds for $\nx$.

  The required inequality now follows by repeatedly applying the
  argument above, since we may move from $r$ to $r'$ via a sequence of
  configurations, in which one rotor is incremented or decremented at each
  step.
\end{proof}

\begin{proof}[Proof of Proposition \ref{subgraph}]
Let $r$ be a rotor configuration on $T$ given by
$$r(v)=\begin{cases}
0, &v\in V(T');\\
b(v), &v\notin V(T').
\end{cases}$$
In the rotor walk on $T$ with initial configuration $r$, whenever the
particle enters a subtree that is not present in $T'$, it is eventually
returns to the parent without escaping, by Theorem \ref{multi}(iii).
Therefore $e(T,r)=e(T',\down)$.  The result now follows by Theorem
\ref{multi}(i).
\end{proof}

\begin{proof}[Proof of Proposition \ref{rot-mech}]
If $r(\nx)\in\{0,b(\nx)\}$, the right side of \eqref{it} is unchanged by
reordering the principal branches.  Iterate to get the result for finite
trees, and apply Lemma~\ref{trunc} for the infinite case.
\end{proof}

\begin{proof}[Proof of Proposition \ref{non-uniq}]
  Let $a=(a_1,a_2,\ldots)$ be a strictly increasing sequence of
  non-negative integers, and let $T(a)$ be a `thinned comb' --
  a tree consisting of a
  singly-infinite path $(\rt,x_0,x_1,\ldots)$, together with additional
  singly-infinite paths attached at each of the vertices
  $\{x_{a_i}\}_{i\geq 1}$. It is straightforward to
  check that
$$e(T(a),\down)=1\,0^{a_1}\,1\,0^{a_2}\,1\,\cdots.$$
Thus there exist uncountably many pairs of such sequences
$(a,b)$ with the property that
$$e(T(a),\down)+e(T(b),\down)
= 2 \; 1 \; 0^1 \; 1 \; 0^3 \; 1 \; 0^7 \; 1 \; \cdots\
0^{2^i-1} \; 1 \cdots $$
since we may choose arbitrarily which
1s come from $a$ and which from $b$. For any such pair we may
form a tree by joining $T(a)$ and $T(b)$ together at their
roots, and adding a new edge from this vertex to a new vertex,
which we designate the new root.  The explosion formula shows
that the resulting trees all give the same escape sequence.
\end{proof}

\begin{proof}[Proof of Proposition \ref{self-maj}]
As remarked in the introduction, we may assume without loss of
generality that every vertex has at least one child, i.e.\
every subtree is infinite.

For the weak inequality $e\succeq (e_{n+1},e_{n+2},\ldots)$,
note that the rotor configuration $r$ after $n$ particles have
been added obviously satisfies $r(v)\geq \down(v)$ for all
$v\in V_\rt$, and apply Theorem \ref{multi}(i).

It remains to prove $e\neq(e_{n+1},e_{n+2},\ldots)$. This
indeed holds for $n=1$, because equality would imply that $e$
is a constant sequence, which is impossible since the first
particle escapes but some do not. For a general $n>0$, consider
the configuration $r$ after $n$ rotor particles have been run,
and let $v$ be some vertex that has been visited exactly once
at that time (such a vertex exists by the proof of
Proposition~\ref{live-ends}).  From the $n=1$ case just
considered, the subtree $T_v$ based at $v$ has the property
that its subsequent escape sequence is strictly majorized by
the initial sequence, $e(T_v,r_v)\prec e(T_v,\down_v)$.  On the
other hand, for {\em every} vertex $u$ we have the weak
inequality $e(T_u,r_u)\preceq e(T_u,\down_u)$. The right side
of the explosion formula \eqref{it} is strictly monotone in the
sense that: if for two rotor configurations $r,s$ we have
$r(\nx)\geq s(\nx)$, while for each principal branch we have
$e(T_i,r_i)\preceq e(T_i,s_i)$, with one of these majorizations
being strict, then $e(T,r)\prec e(T,s)$. Applying this
iteratively to the sequence of ancestors of $v$ gives the
result.
\end{proof}

\section{Constant configurations on regular trees}
\label{s:regular}

Theorem \ref{log} on regular trees is a consequence of the
following result. Fix $b\geq 2$, and for a positive integer
$t$, write $z_t$ for the number of trailing zeros in its
base-$b$ expansion and $\ell_t$ for the last nonzero digit, so
$t\equiv \ell_t b^{z_t} \pmod {b^{z_t+1}}$ with
$\ell_t\in\{1,\dots,b-1\}$.

\begin{prop}[Constant rotor configurations]\label{regular}
Let $\mathbf{k}$ be the constant rotor configuration on the
$b$-ary tree $\TT_b$ given by $\mathbf{k}(v)=k$ for all $v$.
The escape sequence is given as follows.
$$e(\TT_b,\mathbf{k})=\begin{cases}
  1 \;  0^{z_1} \; 1 \; 0^{z_2} \; 1 \; 0^{z_3} \cdots & \text{if } k=0;\\
  1 \;  0^{\ind[\ell_1=b-k]} \; 1 \; 0^{\ind[\ell_2=b-k]} \; 1 \; 0^{\ind[\ell_3=b-k]}
  \cdots &\text{if } 0<k<b.
\end{cases}
$$
\end{prop}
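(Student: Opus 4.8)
The plan is to apply the explosion formula (Theorem \ref{explosion}(i)) directly, exploiting the self-similarity of the $b$-ary tree: every principal branch of $\TT_b$ is itself a copy of $\TT_b$, and under the constant configuration $\mathbf{k}$ every restriction $r_i$ equals $\mathbf{k}$ again. Writing $e = e(\TT_b,\mathbf{k})$, the formula therefore collapses to a single fixed-point equation
\[
e = \ex\Bigl( k\,\theta e + (b-k)\, e \Bigr),
\]
since the first $k$ children get a shifted copy and the remaining $b-k$ children get an unshifted copy. By Theorem \ref{explosion}(iii), $e$ is the \emph{maximal} $\{0,1\}^{\N_+}$-valued solution of this equation. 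So the strategy is: (1) write down the candidate sequence $e^\star$ from the statement; (2) verify $e^\star$ solves the fixed-point equation; (3) argue maximality. Steps (2) and (3) can be handled together by an induction on the index $n$ that pins down $e^\star$ uniquely as the maximal solution.

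First I would set up the bookkeeping. For $k=0$ the claim is $e^\star = 1\,0^{z_1}\,1\,0^{z_2}\cdots$, equivalently $E^\star_n = \#\{1\le t\le n : z_t = 0\}$, the number of integers in $[1,n]$ not divisible by $b$; note $E^\star_n = n - \lfloor n/b\rfloor$, so $E^\star_n$ counts, among the first $n$ particles, those whose index is not a multiple of $b$. The fixed-point equation with $k=0$ reads $e = \ex(b\,e)$, i.e. $E_n = \sum$ of block lengths, which unwinds to the recursion $E_n = n - \lfloor n/b\rfloor$ after noting that the $m$-th block of $e$ has length equal to $b$ times (the $m$-th coordinate of $e$), summed appropriately — more cleanly, $\ex(b\,e)$ has the property that its partial sums satisfy $E_{\,b m} = E_{\,bm - 1} = m\cdot 1_{\{e_m=1\}} + \cdots$; I would phrase this via the identity that if $F = \ex(b\,a)$ with $a$ binary then $F_n$ counts pairs appropriately and solve the recursion. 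For $0<k<b$ the candidate inserts a $0$ after the $t$-th $1$ precisely when $\ell_t = b-k$; equivalently $E^\star_n$ counts $t\in[1,n']$ with $\ell_t \ne b-k$ for the appropriate rescaling — again a congruence count that I would turn into an explicit recursion in $n$.

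The cleanest route for both correctness and maximality is a single induction. Define $e^\star$ by the stated formula and let $E^\star_n$ be its partial sums. I would show by strong induction on $n$ that (a) $e^\star$ satisfies the fixed-point equation coordinate-by-coordinate up to index $n$, and (b) any binary solution $e$ has $E_n \le E^\star_n$. For (b), apply the fixed-point equation to $e$: since $\ex$ and $\theta$ are monotone for $\preceq$ (as noted in the proof of Theorem \ref{explosion}), $E_n$ is controlled by the partial sums of $k\,\theta e + (b-k)e$ at earlier indices, which by the induction hypothesis are $\le$ those of $k\,\theta e^\star + (b-k)e^\star$; then applying $\ex$ preserves this. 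The one subtlety is that $\ex$ depends on the full integer sequence, not just partial sums, so I must track the exact values $(k\,\theta e + (b-k)e)_m$ for $m$ in the relevant range, not merely bound them — here the fact that $e,e^\star$ are \emph{binary} keeps the summands in $\{0,\dots,b\}$ and makes the block structure transparent. I expect the main obstacle to be exactly this: matching the block-length data of $\ex(k\,\theta e^\star + (b-k)e^\star)$ against the trailing-digit description of $e^\star$. This is really a base-$b$ carrying computation — incrementing the "digit counter" $t$ one step corresponds to one pass through the $b$ children of the base — and the cleanest presentation is probably to interpret the position of the $t$-th $1$ in $e^\star$ as (roughly) $t$ plus the number of earlier carries, which is where the trailing-zero count $z_t$ and last digit $\ell_t$ enter. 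Once that combinatorial identification is made, both the fixed-point verification and the maximality induction follow with only routine checking, and Theorem \ref{log} is then read off by summing the stated $e(\TT_b,\mathbf{k})$ and estimating the resulting digit-counts against $\log_b n$.
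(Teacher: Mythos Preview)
Your plan matches the paper's starting point exactly: self-similarity of $(\TT_b,\mathbf{k})$ collapses the explosion formula to the single fixed-point equation $e=\ex\bigl(k\,\theta e+(b-k)e\bigr)$. From there, however, the paper takes a shorter route than your verify-candidate-then-prove-maximality scheme. It simply observes that this equation together with the obvious fact $e_1=1$ (the first particle escapes when $k<b$) already determines $e$ uniquely, by prefix iteration: for $k=0$, any prefix of $e$ of the form $1\,0^{a_1}\,1\cdots 0^{a_m}\,1$ is mapped by $e\mapsto\ex(be)$ to the strictly longer prefix $1^b\,0^{a_1+1}\,1^b\cdots 0^{a_m+1}\,1^b$; for $0<k<b$, a prefix built from blocks $Y_i\in\{1,\,10\}$ is mapped to $WY_1WY_2\cdots WY_m$ with $W=1^{b-k-1}(10)1^{k-1}$. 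Starting from the one-symbol prefix $1$ and iterating gives the stated base-$b$ descriptions directly, and Theorem~\ref{explosion}(iii) is never invoked.

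Your induction-on-$n$ maximality argument can be made to work, but it is unnecessary overhead. Also, a warning: your intermediate claim $E^\star_n=n-\lfloor n/b\rfloor$ for $k=0$ is wrong (for $b=2$ the sequence is $1,1,0,1,1,0,0,1,\ldots$, so $E^\star_2=2\neq 1$); the position of the $m$th $1$ is $m+\sum_{t<m}z_t$, and $\sum_{t\le N}z_t=(N-D_N)/(b-1)$ with $D_N$ the digit sum, not $\lfloor N/b\rfloor$. This is exactly the base-$b$ bookkeeping you flagged as the main obstacle, and the paper's prefix-substitution rules are what make it transparent.
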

Recall that $e(\TT_b,\mathbf{b})=000\cdots$ by Theorem \ref{multi}(iii). It
is interesting that $e(\TT_b,\mathbf{k})$ contains consecutive zeros when
$k=0$, but not when $0<k<b$.

\begin{proof}[Proof of Proposition \ref{regular}]
By Theorem \ref{explosion}(i), the escape sequence $e=e(\TT_b,\mathbf{k})$
satisfies
\begin{equation}
e = \ex\bigl(k\,\theta e +(b-k)\,e \bigr). \label{ex-reg}
\end{equation}
Also, for $k<b$ it is clear that the first particle escapes,
i.e.\ $e_1=1$. It turns out that these facts are sufficient to
determine $e$ (we do not need to make explicit use of the
maximality in Theorem \ref{explosion}(iii)).

For the case $k=0$, we note that if $e$ is any sequence starting with
$$1 \; 0^{a_1} \; 1 \; 0^{a_2} \; 1 \; \cdots \; 0^{a_m} \; 1,$$
then $\ex\bigl(b\,e \bigr)$ starts with
\[
1^b \; 0^{a_1+1} \; 1^b \; 0^{a_2+1} \; 1^b \; \cdots \; 0^{a_m+1} \; 1^b.
\]
Therefore, we can obtain the sequence $e(\TT_b,\down)$ by starting with the
singleton sequence $1$ and repeatedly applying the above transformation.
Each finite sequence that results is necessarily a prefix of the next, and of
the desired infinite sequence.  Since $1^r = 1 \, 0^0\, 1 \,0^0 \cdots 1$, it
is easy to deduce that the claimed expression for $e(\TT_b,\down)$ holds.

The case $0<k<b$ is similar but a little more complicated.  Write $X:=1\,0$,
and suppose $e$ is any sequence starting with a concatenation of the form
$$ Y_1 \, Y_2 \, \cdots \, Y_m, $$
where each $Y_i$ is either $X$ or $1$.  Then it is straightforward to check
that substituting $e$ into the right side of \eqref{ex-reg} gives a sequence
starting
$$ W \, Y_1 \, W \, Y_2 \, \cdots \, W \, Y_m,$$
where $W:=1^{b-k-1} \,  X \,  1^{k-1}$.  Now the claimed
expression follows by repeatedly applying this starting from
the sequence $1$.
\end{proof}

\begin{proof}[Proof of Theorem \ref{log}]
Note first that $\trans(\TT_b)=(b-1)/b$ (by \eqref{p0}, for
example).  The required bounds on $E_n(\TT_b,\mathbf{k})$
follow from Proposition \ref{regular} by routine computations,
which we summarize below.

Observe that restricting to positions $n$ just before a $1$ in
the escape sequence (i.e.\ such that
$e_{n+1}(\TT_b,\mathbf{k})=1$) does not change the $\liminf$ or
$\limsup$ of $\Delta_n$. Let $D_m$ denote the sum of the
base-$b$ digits of $m$.

For the case $k=0$, we have
\[
S_m := \sum_{i=1}^m z_i = \frac{m-D_m}{b-1}.
\]
Therefore if $n+1$ is the position of the $(m+1)$st $1$ in
$e(\TT_b,\down)$, then
$$E_n=m;  \qquad  n=S_m+m;  \qquad  E_n-\trans n=\frac{D_m}{b}.$$
We deduce that for any integer $s\geq 0$, any $b^{s-1}\leq
m<b^s$, and $n$ as above,
\[
\frac 1b \leq E_n-\trans n \leq \frac{b-1}{b} \,s
\]
where the lower and upper bounds are attained by taking
$m=b^{s-1}$ and $m=b^s-1$ respectively. For $m$ in this range
we have $\log_b n = \log_b(m+S_m) = s + O(1)$ as $m\to\infty$.
The claimed expressions for $\liminf_{n\to\infty} \Delta_n$ and
$\limsup_{n\to\infty} \Delta_n$ follow.

Now consider the case $k\geq 1$.  We have
\[
\sum_{i=1}^{m} \ind[\ell_i=b-k]
= \frac{m-D_m}{b-1} + \sum_{j=0}^\infty \ind[m_j\geq b-k],
\]
where $\cdots m_2 \, m_1 \, m_0$ is the base-$b$ expansion of
$m$.  Similarly to the previous case, we deduce that
$$-\frac{k-1}{b} \,s \leq E_n-\trans n \leq \frac{b-k-1}{b}\, s,$$
for $n$ satisfying $E_n=m$ where $m$ has $s$ base-$b$ digits,
with the bounds being attained when $m$ has all digits $b-k$,
and all digits $b-k-1$, respectively.  The claimed expressions
follow.
\end{proof}

\section{Brushes}
\label{s:brush}

In preparation for the proof of Theorem \ref{T:many_escape}, we next provide
a family of recurrent trees (which are interesting in their own right) with
$E_n(T,\down)$ of order $n^{1-\epsilon}$, for arbitrary $\epsilon>0$.

The \df{$d$-brush} ${\BB}^d$ is a tree defined as follows. The
1-brush is a singly infinite path. For $d>1$, the $d$-brush is
a singly infinite path with a $(d-1)$-brush attached, via its
root, at each non-root vertex.  Note that a $d$-brush has two
principal branches: a $d$-brush and a $(d-1)$-brush. (Another
interpretation is as follows.  Index the non-root vertices of
$\TT_2$ by binary sequences, with the base having the empty
sequence and the children of a vertex having the vertex's label
with $0$ or $1$ appended. With this labeling, the $d$-brush is
a subgraph of $\TT_2$ that consists of all vertices with
Hamming weight less than $d$.  It is also easy to embed $\BB^d$
in $\Z^d$.) Since we will be concerned only with the
configuration $\down$, the orders of children do not matter by
Proposition \ref{rot-mech}.

\begin{prop}[Brushes]\label{brush}
The $d$-brush has $E_n(\BB^d,\down) \sim c n^{\beta}$ as $n\to\infty$, for
$\beta = 1-2^{1-d}$ and some $c=c(d)\in(0,\infty)$.
\end{prop}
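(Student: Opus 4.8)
The plan is to set up a recursion for the escape sequence of $\BB^d$ using the explosion formula (Theorem~\ref{explosion}), convert it into a recursion for the escape \emph{counts}, and then extract the asymptotics by an inductive argument on $d$. Since a $d$-brush has two principal branches --- another $d$-brush and a $(d-1)$-brush --- and the configuration is $\down$, so that $r(\nx)=0$, the explosion formula gives
\[
e(\BB^d,\down) = \ex\bigl( e(\BB^d,\down) + e(\BB^{d-1},\down) \bigr)
\]
for $d>1$, while $e(\BB^1,\down)=1\,0\,0\,\cdots$ (a single escape along the path, the rest returning). Writing $E^d_n := E_n(\BB^d,\down)$ and reading off the meaning of $\ex$ --- a block of $1$s of length $a_j$ followed by a single $0$ for each coordinate --- one finds that between consecutive $0$s the number of escapes added is $e_j(\BB^d)+e_j(\BB^{d-1})$. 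Translating this block structure into a relation at the level of partial sums, I expect to obtain a clean self-referential identity of the form: if $N^d_m$ denotes the index $n$ of the $m$-th particle that returns (equivalently, the position of the $m$-th $0$ in the escape sequence), then $E^d_{N^d_m} = m + E^d_{m} + E^{d-1}_{m}$ or a close variant. The key point is that $\BB^d$ appears on \emph{both} sides, but with argument $m$ rather than $n\approx m/(1-\trans)$ on the right, so the recursion is genuinely contractive.

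Next I would solve this recursion asymptotically. The base case $d=1$ has $E^1_n = 1$ for all $n\ge 1$, consistent with $\beta = 1-2^0 = 0$ and $c(1)=1$. For the inductive step, suppose $E^{d-1}_n \sim c(d-1)\, n^{\alpha}$ with $\alpha = 1-2^{2-d}$. Then the recursion for $E^d$ becomes, heuristically, $F(n) \approx F(\gamma n) + c(d-1)(\gamma n)^{\alpha} + \gamma n$ for the appropriate proportion $\gamma$ relating $m$ to $n$ (so that $n \sim m/(1-\trans(\BB^d))$; note $\BB^d$ is recurrent so $\trans = 0$ and in fact $\gamma$ will be determined self-consistently by the density of escapes). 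A standard Mellin / renewal-type analysis of such a dilation equation produces $F(n) \sim c\, n^{\beta}$ with $\beta$ the unique exponent balancing the two dominant terms; checking that $1 - 2^{1-d}$ is exactly this balancing exponent, given that the ``input'' exponent is $\alpha = 1-2^{2-d} = 2\beta - 1$, is the arithmetic heart of the computation: $2\beta - 1 < \beta$ precisely when $\beta < 1$, so the $E^{d-1}$ term is lower order and the self-term $F(\gamma n)$ dominates, pinning down $\beta$ via $\gamma^{\beta} = 1 - (\text{escape density})$ together with the density itself being $\Theta(n^{\beta - 1})\to 0$. I would make this rigorous by a squeeze argument: establish matching upper and lower bounds $c_- n^{\beta} \le E^d_n \le c_+ n^{\beta}$ by induction on dyadic-type scales in $n$, using monotonicity of $E_n$ in $n$ and the recursion to pass from scale $m$ to scale $n$, and then upgrade to a genuine limit $E^d_n/n^{\beta} \to c(d)$ by a subadditivity or regular-variation argument on $\log E^d_{n}$.

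The main obstacle I anticipate is making the informal ``$n \approx \gamma m$'' step precise: the relationship between the return-index $N^d_m$ and $m$ is exactly $N^d_m = m + E^d_{N^d_m}$ (total particles = returns + escapes), which is an \emph{implicit} equation entangling the quantity we are trying to estimate. One must bootstrap: first show $E^d_n = o(n)$ (which follows from Schramm's bound, Theorem~\ref{oded}, since $\trans(\BB^d)=0$ as the brush is recurrent --- this needs a separate easy check, e.g.\ via Proposition~\ref{subgraph} or a direct flow argument, that $\BB^d$ carries no transient flow), hence $N^d_m = m(1+o(1))$; then feed this back to get the power-law rate. A secondary technical point is controlling the error terms uniformly enough that the induction on $d$ closes with genuine constants $c(d)\in(0,\infty)$ rather than merely $\Theta(\cdot)$ bounds; I would handle this by carrying a two-sided estimate with explicitly tracked multiplicative error $1+O(n^{-\delta})$ through the recursion. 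Once the power law is established for all $d$, the statement of Proposition~\ref{brush} follows immediately.
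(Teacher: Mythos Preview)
Your high-level plan (induction on $d$, explosion formula, base case $E^1_n\equiv 1$) matches the paper, and you correctly spot the key arithmetic $\beta=(1+\alpha)/2$. But two concrete problems would derail the argument as written. First, the recursion you state is off by a term: from $e(\BB^d)=\ex\bigl(e(\BB^d)+e(\BB^{d-1})\bigr)$ one gets $E^d_{m+E^d_m+E^{d-1}_m}=E^d_m+E^{d-1}_m$, \emph{not} $m+E^d_m+E^{d-1}_m$ (your version would make every particle escape). The spurious $m$ is exactly the ``$+\gamma n$'' term that then appears in your heuristic $F(n)\approx F(\gamma n)+c'(\gamma n)^\alpha+\gamma n$. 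Second, and more seriously, the dilation/Mellin framework is the wrong picture here: since $\BB^d$ is recurrent, $E^d_m=o(m)$ and hence the ``compression ratio'' $m/n\to 1$; there is no fixed $\gamma<1$, so the equation $\gamma^\beta=1-(\text{escape density})$ degenerates and the renewal/Mellin machinery you invoke does not bite. The correct heuristic is instead an ODE: from $F(n)-F(m)=G(m)$ with $n-m=F(m)+G(m)$ and $F\gg G$ one gets $F\,dF\approx G\,dm$, whence $F^2\sim \tfrac{2c'}{\alpha+1}m^{\alpha+1}$ and $\beta=(1+\alpha)/2$.

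What the paper does that you are missing is a decoupling trick that turns this implicit self-referential relation into an explicit one. Define $s_1=1$ and $s_{i+1}-s_i=E^{d-1}_{s_1+\cdots+s_i}$; then one checks by induction (using the corrected explosion identity above) that $E^d_{s_1+\cdots+s_i}=s_i$. The point is that the sequence $(s_i)$ depends \emph{only} on $E^{d-1}$, not on $E^d$, so the bootstrap you worry about disappears entirely. The remaining work is then a clean real-analysis lemma: if $f(n)\sim a n^\alpha$ and $s_{i+1}-s_i=f(S_i)$ with $S_i=\sum_{j\le i}s_j$, then $s_i\sim d\,i^{(1+\alpha)/(1-\alpha)}$ (proved by sandwiching $s_i/i^\gamma$ between finite constants and then squeezing the $\limsup$ and $\liminf$ to the same value). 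Your proposed ``dyadic scales / subadditivity'' route would have to rediscover this, and as stated it is tied to the incorrect dilation picture.
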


The proof uses the following lemmas. The constant $c(d)$ is
easily computable (recursively) from the lemmas, but this will
not be needed.

\begin{lemma}\label{bruch-rec}
  Let $E_n^d=E_n(\BB^d,\down)$ be the escape number for the
  $d$-brush. For fixed $d\geq 2$, define the sequence $(s_i)=(s_i^d)_{i\geq
    1}$ by $s_1=1$ and
  \[
  s_{i+1}-s_i=E^{d-1}_{s_1+\cdots +s_i}, \quad i\geq 1.
  \]
  Then
  \begin{equation}\label{recur}
    E^d_{s_1+\cdots +s_i}=s_i, \quad i\geq 1.
  \end{equation}
\end{lemma}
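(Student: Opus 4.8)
The plan is to read off everything from the explosion formula, Theorem~\ref{explosion}(i). As noted in the text, the base $\iota$ of $\BB^d$ has $\down(\iota)=0$ and its two principal branches are (isomorphic to) a $d$-brush and a $(d-1)$-brush; since child-orders are irrelevant for the configuration $\down$ by Proposition~\ref{rot-mech}, equation \eqref{it} becomes the self-referential identity
$$e(\BB^d,\down)=\ex\bigl(e(\BB^d,\down)+e(\BB^{d-1},\down)\bigr).$$
Write $f_j:=e_j(\BB^d,\down)$ and $g_j:=e_j(\BB^{d-1},\down)$, so that $E^d_m=\sum_{j\le m}f_j$ and $E^{d-1}_m=\sum_{j\le m}g_j$; the identity reads $f=\ex(a)$ with $a=f+g$.

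First I would record the single elementary fact needed about the explosion operator: if $a\in\N^{\N_+}$ and $e=\ex(a)$, then the prefix of $e$ formed by the first $n$ blocks $1^{a_1}0,\dots,1^{a_n}0$ has length $\bigl(\sum_{k\le n}a_k\bigr)+n$ and contains exactly $\sum_{k\le n}a_k$ ones; hence $\sum_{j\le(\sum_{k\le n}a_k)+n}e_j=\sum_{k\le n}a_k$. Applied to $f=\ex(f+g)$, for which $\sum_{k\le n}a_k=E^d_n+E^{d-1}_n$, this yields the key identity
$$E^d_{\,E^d_n+E^{d-1}_n+n}=E^d_n+E^{d-1}_n\qquad (n\ge 1).$$

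Next I would prove \eqref{recur} by induction on $i$, abbreviating $\sigma_i:=s_1+\cdots+s_i$. For $i=1$ we have $\sigma_1=s_1=1$, and the first particle escapes in any brush under $\down$ (from $\rt$ it moves to the first child at every step and never returns), so $E^d_{\sigma_1}=E^d_1=1=s_1$. For the inductive step, assume $E^d_{\sigma_i}=s_i$; the definition of $(s_i)$ gives $E^{d-1}_{\sigma_i}=s_{i+1}-s_i$, so
$$E^d_{\sigma_i}+E^{d-1}_{\sigma_i}+\sigma_i=s_i+(s_{i+1}-s_i)+\sigma_i=s_{i+1}+\sigma_i=\sigma_{i+1}.$$
Applying the key identity with $n=\sigma_i$ (legitimate since $\sigma_i\ge 1$) then gives $E^d_{\sigma_{i+1}}=E^d_{\,E^d_{\sigma_i}+E^{d-1}_{\sigma_i}+\sigma_i}=E^d_{\sigma_i}+E^{d-1}_{\sigma_i}=s_{i+1}$, completing the induction.

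I expect no genuine obstacle: the whole argument is driven by the identity $E^d_{\,E^d_n+E^{d-1}_n+n}=E^d_n+E^{d-1}_n$, and the recursion defining $(s_i)$ is precisely what makes the map $n\mapsto E^d_n+E^{d-1}_n+n$ carry $\sigma_i$ to $\sigma_{i+1}$, so the induction runs itself. The only points requiring care are bookkeeping: confirming that the two principal branches of $\BB^d$ really are a $d$-brush and a $(d-1)$-brush (so that the self-referential explosion equation is valid), and getting the off-by-one counts right in the block-counting fact about $\ex$.
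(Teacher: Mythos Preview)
Your proof is correct and essentially identical to the paper's: both derive the identity $E^d_{E^d_n+E^{d-1}_n+n}=E^d_n+E^{d-1}_n$ from the explosion formula and then induct on $i$ exactly as you do. You are a bit more explicit than the paper about the block-counting fact for $\ex$ and about invoking Proposition~\ref{rot-mech}, but these are just details the paper leaves implicit.
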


For example, since $E_n^1=1$ for all $n>0$, we have $s_i^2=i$,
so \eqref{recur} gives $E^2_{i(i+1)/2} = i$.

\begin{proof}Fix $d\geq 2$.
Writing $e^d=e(\BB^d,\down)$, by the explosion formula, Theorem
\ref{explosion}(i), we have
  \[
  e^{d}=\ex(e^{d}+e^{d-1}),
  \]
  and therefore for all $n\geq 1$,
  \begin{equation}\label{ex-brush}
    E^d_{E^d_n+E^{d-1}_n+n}=E^d_n+E^{d-1}_n.
  \end{equation}
  We prove \eqref{recur} by induction on $i$. It holds for $i=1$ since the
  first particle escapes. Assuming it holds for $i$, and taking
  $n=s_1+\cdots+s_i$, we have
  \[
  E_n^d+E_n^{d-1}=s_i+(s_{i+1}-s_i)=s_{i+1},
  \]
  so equation \eqref{ex-brush} becomes
  \[
  E^d_{n + s_{i+1}} = s_{i+1}.  \qedhere
  \]
\end{proof}

\begin{lemma}\label{L:asymp}
  Suppose $f:\N\to\N$ satisfies $f(n)\sim a n^\alpha$ as $n\to\infty$, with
  $a\in(0,\infty)$ and $\alpha\in(0,1)$. Let $s_1 = 1$, and inductively
  $s_{m+1} = s_m + f(s_1+\dots+s_m)$ for $m\geq 1$. If $g:\N\to\N$ is
  increasing and satisfies $g(s_1+\dots+s_m) = s_m$ for all $m$, then
  $g(n)\sim b n^{\beta}$, where $\beta=(1+\alpha)/2$, and $b =
  \sqrt{\frac{2a}{1+\alpha}} \left(\frac{1-\alpha}{2}\right)^{1+\alpha}$.
\end{lemma}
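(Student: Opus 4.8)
The plan is to analyze the growth of the partial sums $\sigma_m := s_1 + \dots + s_m$ via a continuous approximation, then transfer the resulting asymptotics back to $s_m$ and finally to $g$. Write $\sigma_m$ for the cumulative sum; then $\sigma_{m+1} - \sigma_m = s_{m+1} = s_m + f(\sigma_m)$, and since $s_m = \sigma_m - \sigma_{m-1}$ we get the second-order recursion $\sigma_{m+1} - 2\sigma_m + \sigma_{m-1} = f(\sigma_m) \sim a\,\sigma_m^\alpha$. This suggests comparing $\sigma_m$ with the solution of the ODE $\phi''(x) = a\,\phi(x)^\alpha$. A power ansatz $\phi(x) = \gamma x^\delta$ forces $\delta(\delta-1)\gamma x^{\delta-2} = a\gamma^\alpha x^{\alpha\delta}$, so matching exponents gives $\delta - 2 = \alpha\delta$, i.e.\ $\delta = 2/(1-\alpha)$, and matching constants gives $\delta(\delta-1)\gamma = a\gamma^\alpha$, i.e.\ $\gamma^{1-\alpha} = a/(\delta(\delta-1))$. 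With $\delta = 2/(1-\alpha)$ one computes $\delta(\delta-1) = 2(1+\alpha)/(1-\alpha)^2$, so $\gamma = \bigl(\frac{a(1-\alpha)^2}{2(1+\alpha)}\bigr)^{1/(1-\alpha)}$. The upshot is that we expect $\sigma_m \sim \gamma m^{\delta}$ with these explicit constants.

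The rigorous version of this heuristic is the main technical step. Rather than invoking ODE comparison directly, I would prove $\sigma_m \sim \gamma m^{2/(1-\alpha)}$ by a bootstrap argument: first establish crude polynomial bounds $c_1 m^{2/(1-\alpha)} \le \sigma_m \le c_2 m^{2/(1-\alpha)}$ for the right exponent (by induction, using that the increment of the discrete second difference is $\asymp \sigma_m^\alpha \asymp m^{2\alpha/(1-\alpha)}$, summing twice), then feed these bounds back into the recursion $\sigma_{m+1} - 2\sigma_m + \sigma_{m-1} = (a + o(1))\sigma_m^\alpha$ to pin down the constant. Concretely, summing this identity from $1$ to $m$ telescopes the left side to $\sigma_{m+1} - \sigma_m - (\sigma_1 - \sigma_0) = s_{m+1} + O(1)$, and the right side is $\sum_{j\le m}(a+o(1))\sigma_j^\alpha \sim a \gamma^\alpha \sum_{j \le m} j^{2\alpha/(1-\alpha)} \sim \frac{a\gamma^\alpha}{\frac{2\alpha}{1-\alpha}+1} m^{\frac{2\alpha}{1-\alpha}+1} = \frac{a\gamma^\alpha(1-\alpha)}{1+\alpha} m^{(1+\alpha)/(1-\alpha)}$ once the crude bounds justify the Riemann-sum asymptotics. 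Hence $s_{m+1} \sim C m^{(1+\alpha)/(1-\alpha)}$ with $C = \frac{a\gamma^\alpha(1-\alpha)}{1+\alpha}$; summing once more gives $\sigma_m \sim \frac{C(1-\alpha)}{1+\alpha+(1-\alpha)} m^{(1+\alpha)/(1-\alpha)+1} = \frac{C(1-\alpha)}{2} m^{2/(1-\alpha)}$, and self-consistency ($\gamma = C(1-\alpha)/2$) recovers the value of $\gamma$ above, confirming the bootstrap closes.

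Finally I would transfer the asymptotics to $g$. We have $s_m \sim C m^{(1+\alpha)/(1-\alpha)} = \frac{2\gamma}{1-\alpha} \cdot \frac{1-\alpha}{2} m^{(1+\alpha)/(1-\alpha)}$; cleaner is to note $s_m = \sigma_m - \sigma_{m-1} \sim \gamma \delta m^{\delta - 1} = \frac{2\gamma}{1-\alpha} m^{(1+\alpha)/(1-\alpha)}$. Now $g$ is the increasing function with $g(\sigma_m) = s_m$, so writing $n = \sigma_m$ we have $m \sim (n/\gamma)^{1/\delta} = (n/\gamma)^{(1-\alpha)/2}$ and therefore $g(n) = s_m \sim \frac{2\gamma}{1-\alpha}\bigl((n/\gamma)^{(1-\alpha)/2}\bigr)^{(1+\alpha)/(1-\alpha)} = \frac{2\gamma}{1-\alpha}\gamma^{-(1+\alpha)/2} n^{(1+\alpha)/2} = \frac{2}{1-\alpha}\gamma^{(1-\alpha)/2} n^\beta$ with $\beta = (1+\alpha)/2$. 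Since $g$ is increasing and monotone interpolation between consecutive values $\sigma_m$ changes $g$ by only $s_m = o(\sigma_m^\beta)$ (as $s_m \asymp m^{\delta-1} = o(m^{\delta\beta}) = o(\sigma_m^\beta)$ because $\delta - 1 < \delta\beta \iff \beta > 1 - 1/\delta = (1-\alpha)/2$, which holds since $\beta = (1+\alpha)/2$), the asymptotic $g(n) \sim b n^\beta$ holds for all $n$, not just along the subsequence $(\sigma_m)$. Substituting $\gamma = \bigl(\frac{a(1-\alpha)^2}{2(1+\alpha)}\bigr)^{1/(1-\alpha)}$ into $b = \frac{2}{1-\alpha}\gamma^{(1-\alpha)/2}$ gives $b = \frac{2}{1-\alpha}\bigl(\frac{a(1-\alpha)^2}{2(1+\alpha)}\bigr)^{1/2} = \sqrt{\frac{2a}{1+\alpha}}$, which after absorbing the prefactors should be algebraically reconciled with the stated form $b = \sqrt{\frac{2a}{1+\alpha}}\bigl(\frac{1-\alpha}{2}\bigr)^{1+\alpha}$ — I expect the discrepancy to be a typo or a differing normalization convention in the statement, and I would verify the exact constant by a direct check against the known case $f\equiv 1$ (Lemma~\ref{bruch-rec}, $d=2$: $s_i = i$, $\sigma_m = m(m+1)/2$, $g(m(m+1)/2)=m$, so $g(n)\sim\sqrt{2n}$, matching $\alpha=0$). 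The main obstacle is the bootstrap in the middle paragraph: making the Riemann-sum approximations uniform enough that the constant, not just the exponent, is correctly identified, which requires the crude two-sided polynomial bounds to be in hand first.
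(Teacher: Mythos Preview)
Your approach---crude polynomial bounds followed by a $\limsup/\liminf$ fixed-point argument to identify the leading constant, then transfer to $g$ by monotonicity---is essentially the paper's; the only cosmetic difference is that the paper works directly with $s_m$ (proving $s_m \sim d\,m^{(1+\alpha)/(1-\alpha)}$ via the recursion $s_{m+1}-s_m=f(S_m)$) rather than with the second difference of $\sigma_m$, but the cycle of inequalities closes in the same way. You are also right to flag the constant: the paper's own derivation and your $\alpha=0$ check both yield $b=\sqrt{2a/(1+\alpha)}$, so the extra factor $\bigl(\tfrac{1-\alpha}{2}\bigr)^{1+\alpha}$ in the stated lemma is a misprint.
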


\begin{proof}
  Write $S_m = \sum_{i=1}^m s_i$. We will show that $s_m\sim d m^\gamma$ with
  \[
  \gamma = \frac{1+\alpha}{1-\alpha}
  \qquad\text{and}\qquad
  d = \left(\frac{a(1-\alpha)^{1+\alpha}}{2^\alpha(1+\alpha)}
  \right)^{1/(1-\alpha)}.
  \]
  This implies $S_m \sim \frac{d}{1+\gamma} m^{1+\gamma}$, and the lemma
  then follows by the given properties of $g$, since $\gamma/(1+\gamma) =
  \beta$.

  Let $L = \limsup_{m\to\infty} s_m /m^\gamma$. We first show $L < \infty$.
  Since $S_m\to\infty$, for some $m_0$, for all $m\geq m_0$ we have $f(S_m) <
  2a S_m^\alpha$. Let $C \geq \max_{m\leq m_0} s_m/m^\gamma$ be such
  that for all $m$
  \[
  C m^\gamma + C^\alpha \frac{2a}{(1+\gamma)^\alpha} (m+1)^{\gamma-1}
  < C(m+1)^\gamma
  \]
  (it is easy to see such $C$ exists since $\alpha<1$).
  We now prove by induction that $s_m < C m^\gamma$ for all $m$ (hence
  $L\leq C$). This is known for $m\leq m_0$. If it holds up to $m$, then
  $S_m \leq \frac{C}{1+\gamma} (m+1)^{1+\gamma}$, so
  \begin{align*}
    s_{m+1} = s_m + f(S_m)
    &\leq C m^\gamma + 2a \left(\frac{C}{1+\gamma}\right)^\alpha
    (m+1)^{\alpha(1+\gamma)} \\
    &= C m^\gamma + C^\alpha \frac{2a}{(1+\gamma)^\alpha} (m+1)^{\gamma-1} \\
    &< C (m+1)^\gamma,
  \end{align*}
completing the induction.

In a similar manner, integrating, using $f(n)\sim a n^\alpha$, and
integrating again gives the sequence of inequalities
  \begin{align*}
    \limsup \frac{S_m}{m^{1+\gamma}} &\leq \frac{L}{\gamma+1}; \\
    \limsup \frac{s_{m+1}-s_m}{m^{\gamma-1}} &\leq
    a \left(\frac{L}{\gamma+1} \right)^\alpha; \\
    L = \limsup \frac{s_m}{m^\gamma} &\leq
    \frac{a}{\gamma} \left(\frac{L}{\gamma+1} \right)^\alpha.
  \end{align*}
  Since $\alpha<1$, solving for $L$ gives the upper bound
\begin{equation}\label{Lbound}
  L \leq\left(\frac{a}{\gamma(\gamma+1)^\alpha} \right)^{1/(1-\alpha)}.
\end{equation}

  Similarly, let $\ell = \liminf_{m\to\infty} s_m/m^\gamma$. We first show
  $\ell>0$. For $m>m_1$ we have $f(S_m) > a S_m^\alpha/2$. Take
  $0<c<\min_{m\leq m_0} s_m/m^\gamma$, small enough that for all $m$
  \[
  c m^\gamma + c^\alpha \frac{a}{2(1+\gamma)^\alpha} m^{\gamma-1} >
  (m+1)^\gamma.
  \]
  Then $S_m \geq \frac{c}{1+\gamma} m^{\gamma-1}$, and so
  \begin{align*}
  s_{m+1} = s_m + f(S_m)
  &\geq c m^\gamma + \frac{a}{2} \left(\frac{c}{1+\gamma}\right)^\alpha
  m^{\gamma-1} \\
  &= c m^\gamma + c^\alpha \frac{a}{2(1+\gamma)^\alpha} m^{\gamma-1} \\
  &> c (m+1)^\gamma.
  \end{align*}
By induction we find $s_m>c m^\gamma$ for all $m$.

By the same argument as for the $\limsup$, but with the inequalities
reversed, this implies
$$\ell \geq \frac{a}{\gamma} \left(\frac{\ell}{\gamma+1} \right)^\alpha.$$
Solving shows that $\ell$ is bounded below by the same quantity as in
\eqref{Lbound}, hence $\ell=L$.
\end{proof}

\begin{proof}[Proof of Proposition \ref{brush}]
For the $1$-brush we have $E_n(\BB^1,\down) = 1$ for all $n$.
For other $d$ the result follows by induction from
Lemmas~\ref{bruch-rec} and~\ref{L:asymp}.
\end{proof}

\section{Large-discrepancy trees}
\label{s:disc}

In this section we prove Theorem \ref{T:many_escape}.  We begin with a
corollary of Proposition \ref{brush}.

\begin{cor}\label{hydra}
For any $\epsilon>0$ there exists a tree $T$ of maximum degree $3$ which is
recurrent for random walk and satisfies $E_n(T,\down) > n^{1-\epsilon}/2$.
\end{cor}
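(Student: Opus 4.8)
The plan is to derive Corollary~\ref{hydra} directly from Proposition~\ref{brush} by choosing $d$ large enough, and then arranging the bounded-degree requirement. First I would fix $\epsilon>0$ and pick an integer $d\geq 2$ with $\beta:=1-2^{1-d}>1-\epsilon$; this is possible since $2^{1-d}\to 0$. Proposition~\ref{brush} then gives $E_n(\BB^d,\down)\sim c(d)\,n^{\beta}$ for some positive constant, so for all $n$ sufficiently large we have $E_n(\BB^d,\down)\geq \tfrac12 n^{\beta}\geq \tfrac12 n^{1-\epsilon}$, using $\beta>1-\epsilon$ (and absorbing the constant $c(d)$, which may be less than $1$, into the polynomial gap for large $n$). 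So the asymptotic lower bound holds for the $d$-brush itself for all large $n$.

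Next I would address the two remaining stipulations: maximum degree $3$, and the bound holding for \emph{all} $n$ rather than just large $n$. For the degree: the $d$-brush as defined does not have degree bounded by $3$ (a vertex on the main path of $\BB^d$ carries an attached $(d-1)$-brush, so it has degree $3$, but the root of each attached brush where it meets the path could push the degree up; more simply, each main-path vertex of $\BB^d$ has its two path-neighbours plus the base of an attached $(d-1)$-brush, giving degree $3$, while interior vertices of attached brushes similarly have degree $3$ --- so in fact $\BB^d$ already has maximum degree $3$, and this needs only to be checked from the recursive definition). If a subdivision is nonetheless needed to enforce the degree bound, I would subdivide each edge by inserting finitely many degree-$2$ vertices; by Proposition~\ref{rot-mech} the orders do not matter for $\down$, and subdividing an edge into a path with all rotors pointing rootward does not change the escape sequence (each added degree-$2$ vertex simply relays the particle), so $E_n$ is unchanged. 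Recurrence for random walk is inherited: the $d$-brush is recurrent (it is a subgraph of $\TT_2$ of density zero among levels, or one checks directly that the Nash-Williams criterion applies along the main path since only finitely many brushes branch off within any prefix), hence $\trans=0$; subdivision preserves recurrence.

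For the ``all $n$'' issue, note that $E_0(T,\down)=0$ and $E_n(T,\down)$ is nondecreasing in $n$, whereas $n^{1-\epsilon}/2<1$ for $n\leq 2^{1/(1-\epsilon)}$, and the first particle escapes (since $\BB^d$, being transient-free but with live paths down the main spine, has $e_1=1$ --- indeed $E_1(\BB^d,\down)=1$), so $E_n\geq 1>n^{1-\epsilon}/2$ for the initial range of small $n$ as well; the intermediate range is finite, and if necessary one passes to a slightly smaller $\epsilon$ or multiplies through by adjusting the constant, but cleanly the statement $E_n(T,\down)>n^{1-\epsilon}/2$ can be secured for all $n\geq 1$ by choosing $d$ so that $\beta>1-\epsilon$ strictly and noting $c(d)n^\beta/n^{1-\epsilon}\to\infty$, so the ratio exceeds $1/2$ for all large $n$, while the finitely many remaining values are handled by monotonicity and $E_1=1$.

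The main obstacle I anticipate is purely bookkeeping rather than conceptual: confirming that the $d$-brush genuinely has maximum degree $3$ straight from its recursive construction (main path vertices have two spine-neighbours plus one attached sub-brush root), and making the ``for all $n$'' claim airtight by combining the asymptotic bound with monotonicity and the $n=1$ base case. No new ideas beyond Propositions~\ref{brush} and~\ref{rot-mech} are required; the corollary is essentially Proposition~\ref{brush} repackaged with $d$ chosen as a function of $\epsilon$.
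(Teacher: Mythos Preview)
Your instinct to take $T=\BB^d$ directly is natural, and several of your observations are correct: the $d$-brush does have maximum degree $3$ (each spine vertex has a parent, a spine-child, and the base of one attached $\BB^{d-1}$), so the subdivision digression is unnecessary; and recurrence holds (though your sketched reasons are not quite right --- being a subgraph of the transient tree $\TT_2$ proves nothing, and your Nash--Williams remark is vague; the clean reason is that $\BB^d$ has only countably many ends).

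The real gap is in the ``all $n$'' argument. From $E_n(\BB^d,\down)\sim c(d)\,n^\beta$ with $\beta>1-\epsilon$ you correctly get $E_n>n^{1-\epsilon}/2$ for all $n\geq N$, but $N$ depends on $c(d)$ and on the rate of convergence in Proposition~\ref{brush}, neither of which you control. Your fallback ``monotonicity and $E_1=1$'' only yields $E_n\geq 1$, which covers $n<2^{1/(1-\epsilon)}$ but says nothing about the range $2^{1/(1-\epsilon)}\leq n<N$. ``Passing to a smaller $\epsilon$'' is circular ($\epsilon$ is given), and ``adjusting the constant'' is not available since the statement fixes the constant $\tfrac12$. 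So as written, the proof does not establish the bound for all $n$.

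The paper closes exactly this gap with an additional construction. It builds $T(k,d)$ by hanging copies of $\BB^d$ from the leaves of a depth-$k$ binary tree; iterating the explosion formula doubles the asymptotic constant at each level, so for $k$ large one gets $E_n(T(k,d))\geq n^\beta/2$ for all $n>N$. For small $n$ it uses a second subgraph: $T(k',1)$ (a finite binary tree with an infinite path at each leaf) satisfies $E_n(T(k',1),\down)\geq n/2$ for all $n\leq 2^{k'}$, because its escapes coincide with the first $2^{k'}$ escapes of $\TT_2$. Choosing $k'\geq k$ with $2^{k'}>N$, both $T(k,d)$ and $T(k',1)$ sit inside $T(k',d)$ as rooted subgraphs, and Proposition~\ref{subgraph} gives
\[
E_n(T(k',d),\down)\geq \max\bigl\{E_n(T(k,d),\down),\,E_n(T(k',1),\down)\bigr\}\geq n^\beta/2
\]
for every $n\geq 1$. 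This two-subgraph trick is the missing idea in your argument.
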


\begin{proof}
For $k,d\geq 1$, let $T(k,d)$ be a tree defined as follows:
start with $(\TT_2)^k$, a binary tree of depth $k$, and attach
$2$ disjoint copies of the brush $\BB^d$, via their roots, to
each of its $2^{k-1}$ leaves.  This is recurrent since it has
only countably many ends.  Take $d$ sufficiently large that
$2^{1-d}<\epsilon$. We claim that $T(k,d)$ satisfies the
claimed bound for $k$ large enough.  In what follows we
suppress the rotor configuration in the notation $E_n$, since
it will always be $\down$.

First note that if $H$ is any tree satisfying $E_n(H)\sim a
n^\alpha$, with $\alpha<1$, then the tree $H'$ having two
disjoint copies of $H$ as its principal branches satisfies
$E_n(H')\sim 2 a n^\alpha$, by Theorem \ref{explosion}(i).
Using Proposition \ref{brush} and applying this repeatedly
shows that for any $d$, for $k$ sufficiently large we have
$E_n(T(k,d))\sim n^\beta$ with $\beta=1-2^{1-d}$, and so for
$n$ sufficiently large (say $n>N=N(k(d))$), we have
$E_n(T(k,d))\geq n^\beta/2$.

Now for any $k'$ consider $E_n(T(k',1))$ (which is a finite binary tree with
a singly infinite path attached at each leaf).  We claim that
$E_n(T(k',1))\geq n/2\geq n^\beta/2$ for all $n\leq 2^{k'}$.  Indeed, the
escapes in $T(k',1)$ occur at precisely the times of the first $2^{k'}$
escapes in the infinite binary tree $\TT_2$, as may be seen from the proof of
Proposition \ref{regular} (case $k=0$), for example.  And the claimed
inequality then follows from Theorem \ref{multi}(ii).

Now take $k'$ large enough that $2^{k'}>N$ and $k'\geq k$, and observe that
$T(k',d)$ contains (isomorphic copies of) both $T(k,d)$ and $T(k',1)$ as
subgraphs with the same root.  Therefore by Proposition \ref{subgraph}, for
all $n$,
\[E_n(T(k',d))\geq\max\Bigl[E_n(T(k,d)),E_n(T(k',1))\Bigr]\geq
n^\beta/2.\qedhere\]
\end{proof}

\begin{proof}[Proof of Theorem~\ref{T:many_escape}]
Let $(\epsilon_k)_{k\geq 1}$ be a $(0,1]$-valued sequence to be determined
later.  Construct a tree $T$ by taking an infinite path with vertices
$\rt,x_1,x_2,\ldots$, and attaching a recurrent tree $T(k)$ that satisfies
$E_n(T(k),\down) > n^{1-\epsilon_k}/2 \;\forall n$, via its root, to vertex
$x_k$, for each $k\geq 1$.  Such trees $T(k)$ exist by Corollary~\ref{hydra},
and the resulting tree $T$ is clearly recurrent.  We will choose the
$\epsilon_k$ so that it satisfies the required bound.  In the following the
rotor configuration is always $\down$.

First, if $H$ is any infinite tree, let $H^{(k)}$ be the tree
formed by attaching a path of $k$ edges to the root of $H$, and
taking the other end of this path as the new root.  If $e(H)=1
\,0^{a_1}\, 1\, 0^{a_2}\, 1 \cdots$ then by
Theorem~\ref{explosion}, $e(H^{(k)})=1 \,0^{a_1+k}\, 1\,
0^{a_2+k}\, 1 \cdots\preceq 1 \,0^{ka_1+k}\, 1\, 0^{ka_2+k}\, 1
\cdots$, so we have $E_n(H^{(k)})\geq E_{\lceil n/k\rceil}(H)$
for all $n$.

Since the tree $T$ constructed in the first paragraph contains each
$T(k)^{(k)}$ as a subgraph, by Proposition \ref{subgraph} we deduce that it
satisfies
\begin{equation}\label{disc-bound}
E_n(T) \geq \frac12 \left(\frac{n}{k}\right)^{1-\eps_k}
\geq \frac{n}{2 k n^{\eps_k}}
\end{equation}
for all $n,k\geq 1$.

Now suppose we are given $f$ with $f(n)=o(n)$.  Since the desired statement
involves only sufficiently large $n$ we may assume without loss of
generality that
$f(n)< n/4$ for all $n$.  For each $n$, let $k(n) = \bigl\lfloor \tfrac12
\sqrt{n/f(n)} \bigr\rfloor$, and note that this is at least $1$ by the last
assumption. Let
\[
\epsilon_k := 1 \wedge \min \Bigl\{\log_n \sqrt{n/f(n)}: k(n)=k\Bigr\}.
\]
Since $k(n)\to\infty$ as $n\to\infty$, this is the minimum of a finite set,
and $f(n)<n$, so $\epsilon_k>0$.  This choice ensures that for all $n$ we
have $n^{\epsilon_{k(n)}} \leq \sqrt{n/f(n)}$, therefore taking $k=k(n)$ in
\eqref{disc-bound} gives $E_n(T)\geq f(n)$ for all $n$.
\end{proof}

\section{Random configurations}
\label{s:random}

In this section we prove Theorem \ref{phase}(i).  We begin with an exact
result on finite regular trees, which is somewhat remarkable in that it holds
regardless of the initial rotor configuration.

\begin{prop}[Finite regular trees]\label{exact}
Consider $\TT_b^{h+1}$, the $b$-ary tree truncated at level $h+1$, with an
arbitrary initial rotor configuration.  Let $S$ be the set of the $b^h$
leaves at level $h+1$.  If $1+b+\cdots+b^h$ rotor particles are started in
succession at $\rt$, and stopped when they enter $S\cup\{\rt\}$, then exactly
one particle is absorbed at each vertex of $S$, and the rest are absorbed at
$\rt$.
\end{prop}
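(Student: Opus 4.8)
The plan is to prove, by induction on $m\geq 1$, the following statement about the $b$-ary tree $\TT_b^m$ of height $m$ (root of degree $1$, leaves at level $m$) with the $b^{m-1}$ level-$m$ leaves declared sinks: running $N_m:=1+b+\cdots+b^{m-1}$ particles from the root, with \emph{any} initial rotor configuration, leaves exactly one particle at each sink and the remaining $N_m-b^{m-1}=N_{m-1}$ particles at the root. This is the proposition, taking $m=h+1$. The base case $m=1$ is trivial (one particle, one sink). For $m\geq 2$ write $\nx$ for the base; its subtrees $T_1,\dots,T_b$ based at $\nx^{(1)},\dots,\nx^{(b)}$ are copies of $\TT_b^{m-1}$, with sink sets $S_i$ of size $b^{m-2}$ and restricted rotor configurations $r_i$. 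The inductive hypothesis applied to each $T_i$ gives $E_{N_{m-1}}(T_i,S_i,r_i)=b^{m-2}$; moreover, since the number of particles at a fixed sink is nondecreasing in the number of runs, after fewer than $N_{m-1}$ runs on $T_i$ no sink of $S_i$ holds more than one particle, and after more than $N_{m-1}$ runs none holds fewer than one.

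The key device is the excursion decomposition at $\nx$. Every particle runs $\rt\to\nx$ and is then routed by the rotor at $\nx$ through its neighbours in the fixed cyclic order $\rt,\nx^{(1)},\dots,\nx^{(b)}$: a step to $\rt$ terminates the particle (a ``return''), while a step to $\nx^{(i)}$ launches an excursion into $T_i$, namely one run of the rotor walk on $T_i$ from its root, which is either absorbed in $S_i$ or returns to $\nx$ for a further round. Let $c_0$ be the total number of returns and $c_i$ the total number of excursions into $T_i$; since on successive visits to $\nx$ the rotor there cycles through the $b+1$ neighbours in a fixed order, the $c_j$ ($0\le j\le b$) differ \emph{pairwise} by at most $1$. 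The excursions into $T_i$, taken in order, constitute exactly a run of $c_i$ particles on $T_i$ from $r_i$, so the number absorbed in $S_i$ is $E_i:=E_{c_i}(T_i,S_i,r_i)$; and as every particle ends either at $\rt$ or in some $S_i$, we have $c_0=N_m-\sum_{i=1}^b E_i$.

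Next I would pin down the totals by a squeeze. If $\sum_i E_i<b^{m-1}$ then $c_0>N_{m-1}$, so $c_0\ge N_{m-1}+1$, so every $c_i\ge N_{m-1}$; monotonicity and the inductive hypothesis then give $E_i\ge b^{m-2}$ for all $i$, whence $\sum_i E_i\ge b^{m-1}$, a contradiction; the reverse inequality is excluded symmetrically. Hence $\sum_i E_i=b^{m-1}$ and $c_0=N_{m-1}$, so every $c_i\in\{N_{m-1}-1,N_{m-1},N_{m-1}+1\}$. Here the \emph{pairwise} gap bound is essential: it prevents one $c_i$ being $N_{m-1}-1$ while another is $N_{m-1}+1$, so either all $c_i\le N_{m-1}$ or all $c_i\ge N_{m-1}$. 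In the first case monotonicity gives $E_i\le b^{m-2}$ for all $i$, and since $\sum_i(E_i-b^{m-2})=0$ with every summand $\le 0$, every $E_i$ equals $b^{m-2}$; the second case is identical with inequalities reversed. Finally fix $i$: the branch $T_i$ has run $c_i$ particles and has exactly $b^{m-2}=|S_i|$ of them in $S_i$; by the inductive hypothesis and the one-sided monotone bounds above, $c_i\le N_{m-1}$ forces no sink of $S_i$ to exceed one particle and $c_i\ge N_{m-1}$ forces no sink to have fewer — either way each sink of $S_i$ holds exactly one. Since $S=\bigsqcup_i S_i$, every sink of $S$ holds exactly one particle, and the remaining $c_0=N_{m-1}$ particles are at $\rt$, completing the induction.

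The delicate point is not to overreach: it is \emph{false} that each branch always receives exactly $N_{m-1}$ excursions, or that the rotor configuration returns to its initial state after $N_m$ particles (small asymmetric examples break both). What makes the argument work is, first, that the pairwise $\pm1$ balance pushes all the $c_i$ to the same side of $N_{m-1}$, ruling out cancellation in $\sum_i(E_i-b^{m-2})=0$ and so pinning every $E_i$ to $b^{m-2}$; and second, that ``correct total occupation $b^{m-2}=|S_i|$ plus a monotone one-sided bound on each individual sink'' already forces one-per-sink, whichever side of $N_{m-1}$ the $c_i$ sit on. (One could instead obtain $\sum_i E_i=b^{m-1}$ from the explosion formula, reading off that the first $N_m$ terms of the escape sequence of $\TT_b^m$ contain exactly $b^{m-1}$ ones, but the excursion argument needs nothing beyond the inductive hypothesis.)
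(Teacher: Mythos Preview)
Your proof is correct and follows the same inductive skeleton as the paper's --- decompose into principal branches and exploit the rotor balance at the base $\nx$ --- but the execution differs in a useful way. The paper first invokes the explosion formula (Theorem~\ref{explosion}(ii)) to show $E_{t(h)}\geq b^h$ by computing the worst case $r(\nx)=b$, and then argues by contradiction: if some leaf is empty, its branch received fewer than $t(h-1)$ particles, so by the $\pm 1$ rotor balance no branch received more than $t(h-1)$, so every leaf holds at most one particle, contradicting the total bound. Your argument replaces the explosion-formula step by a direct, symmetric squeeze on $c_0=N_m-\sum_i E_i$ using the pairwise balance among \emph{all} the $c_j$ (including $c_0$), which pins $\sum_i E_i$ to exactly $b^{m-1}$ in one stroke; you then extract the per-sink conclusion from the observation that the $c_i$ must all lie on the same side of $N_{m-1}$. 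This is more elementary (it needs no explosion formula) and also tidier: the paper's written argument explicitly derives only that every leaf absorbs at least one particle, with the matching upper bound left implicit, whereas your squeeze handles both directions at once.
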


\begin{proof}
Write $t(h)=1+b+\cdots +b^h$. We use induction on $h$.  The
result clearly holds when $h=1$.  Assume it holds for $h-1$.
Let $r$ be the initial rotor configuration.

We first apply the explosion formula to bound below the total
number of particles escaping to $S$.  By the inductive
hypothesis, each of the $b$ principal branches $T_i$ has
$E_{t(h-1)}(T_i,S_i,r_i)=b^{h-1}$.  Therefore, in the worst
case $r(\nx)=b$, the sequence $z:=\sum_{i=1}^b \theta e(T_i)$
satisfies $\sum_{j=1}^{1+t(h-1)} z_j = b^h$.  By Theorem
\ref{explosion}(ii), this implies $E_n(T)\geq b^h$ for
$n=1+t(h-1)+b^h-1=t(h)$, i.e.\ at least $b^h$ of the $t(h)$
particles are absorbed by $S$.

Now suppose that some leaf $w\in S$ absorbs no particles.  Then
by the inductive hypothesis, the principal branch containing
$w$ must have received a particle from $\nx$ strictly fewer
than $t(h-1)$ times, hence (by considering the rotor at $\nx$),
no principal branch received a particle strictly more than
$t(h-1)$ times.  Therefore by the inductive hypothesis again,
no leaf of $S$ absorbed more than one particle, which
contradicts the above bound on the total absorptions by $S$.
\end{proof}

\begin{cor}\label{nonzero}
Let $R$ be an i.i.d.\ random rotor configuration on $\TT_b$ satisfying $\E
R(v)<b-1$.  There exist $\delta,c,C>0$ such that for all $n$,
$$\P\big(E_n(\TT_b,R) < \delta n\big)\leq C e^{-cn}.$$
\end{cor}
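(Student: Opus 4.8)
The plan is to bound the escape number from below by the escape number of a finite truncated tree, and then control the latter via the exact count in Proposition~\ref{exact} together with a large-deviation estimate for a sum of i.i.d.\ indicators. Concretely, fix a level $h$ (a constant, to be chosen depending only on the distribution of $R$). By Proposition~\ref{subgraph} and Theorem~\ref{multi}(i) it suffices to work with a subtree of $\TT_b$, and by Lemma~\ref{trunc} we have $E_n(\TT_b,R)\geq E_n(\TT_b^{h+1},S^h,R^{h+1})-o(1)$; more usefully, we want to find, inside $\TT_b$, many disjoint copies of a finite tree each of which is guaranteed to produce an escape.

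First I would identify the mechanism for a guaranteed escape. By the live-path characterization (Proposition~\ref{live-ends}), every live end contributes one escape, and a live path exists from a vertex $u$ precisely when $u$ has a descendant reachable by a chain of \emph{good} children (in the terminology of the proof of Theorem~\ref{phase}(ii)), where a child $v=u^{(k)}$ is good iff $R(u)<k$. The number of good children of $u$ is $b-R(u)$, with mean $b-\E R(v)>1$ by hypothesis, so the good-children process is a supercritical Galton--Watson process. Hence, with positive probability $\rho>0$ (depending only on the distribution), there is an infinite good path starting at the base $\nx$, i.e.\ a live end; and more than that, a standard branching argument shows that for a suitable constant $h$ the number of vertices at level $h$ reachable from $\nx$ by a good path of length $h$ is, with probability bounded away from zero, at least $\lambda^h$ for some $\lambda>1$. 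The key point is that disjoint subtrees use disjoint randomness, so these events are independent across the subtrees rooted at distinct level-$h$ vertices.

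The second step is to convert ``many live paths'' into a linear lower bound on $E_n$ for finite $n$. Here Proposition~\ref{exact} is the right tool: for the truncated tree $\TT_b^{H+1}$ with $t(H)=1+b+\cdots+b^H$ particles, exactly one particle is absorbed at each of the $b^H$ leaves, \emph{regardless of the rotors}. Combining this with the explosion formula as in the proof of Proposition~\ref{exact}, one sees that after $t(H)$ particles the subtree rooted at $\nx$ has ``fired'' each of its depth-$H$ descendants once; iterating this over $K$ successive blocks of $t(H)$ particles, one shows that after $n\asymp K\,t(H)$ particles, the tree has sent roughly $n/t(H)$ particles into the branches beyond level $H$, distributed essentially evenly. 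Each such particle that enters the subtree below a vertex $x$ from which a live path emanates is forced to escape. Writing $N$ for the (random) number of level-$H$ vertices from which a live path emanates, we obtain $E_n(\TT_b,R)\geq c' N \cdot (n/t(H))/b^H$ for $n$ large — linear in $n$ with a random but a.s.\ positive constant, once we know $N\geq 1$.

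To upgrade this to the stated exponential concentration, I would partition $\TT_b$ into a linearly growing family of disjoint finite pieces and apply independence. Fix $H$ as above; the subtrees of $\TT_b$ rooted at the distinct vertices of level $mH$, for $m=1,2,\dots$, give, at each fixed $m$, roughly $b^{mH}$ disjoint copies of $\TT_b^{H+1}$-type structures, but the cleaner route is: let $A_j$ be the event that the subtree hanging off the $j$th vertex of level $H$ contains a live path; the $A_j$ are i.i.d.\ with $\P(A_j)=\rho>0$, and by the preceding paragraph $E_n(\TT_b,R)\geq \delta n$ whenever at least a $\rho/2$ fraction of the first $M(n)$ of these events occur, where $M(n)\to\infty$ linearly in $\log n$ — no, linearly in $n$ after we account for how many level-$H$ subtrees actually receive $\Omega(n/b^H)$ particles. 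A Chernoff bound on $\sum_{j\le M} \ind_{A_j}$ then gives $\P(E_n(\TT_b,R)<\delta n)\leq C e^{-c M(n)}\leq Ce^{-cn}$, choosing $\delta$ small and $H$ large depending only on the law of $R$. I expect the main obstacle to be the \textbf{quantitative bookkeeping in this last step}: one must verify that a positive fraction of particles genuinely reach level-$H$ subtrees that are ``independent'' in the required sense, i.e.\ that the rotor dynamics route $\Omega(n)$ particles into $\Omega(n)$ distinct such subtrees — this is where Proposition~\ref{exact} (exact, configuration-independent absorption counts) and the Abelian property must be used carefully to decouple the deterministic routing from the i.i.d.\ randomness of $R$, so that the branching-process lower bound applies independently to each piece.
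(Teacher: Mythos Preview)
You have assembled the right ingredients (supercritical good-children branching process, Proposition~\ref{exact}, the Abelian property, a Chernoff bound), but there is a genuine gap in how they are combined. You explicitly fix the level $H$ to be a \emph{constant} depending only on the law of $R$. Then the events $A_j$ (``the $j$th subtree at level $H$ contains a live path'') number only $b^H$, a fixed constant, so a Chernoff bound over $\sum_{j\le M}\ind_{A_j}$ can never give decay $e^{-cn}$: the exponent is bounded by $b^H$. Your own hesitation (``$M(n)\to\infty$ linearly in $\log n$ --- no, linearly in $n$'') flags exactly this problem, and the attempted repair (``account for how many level-$H$ subtrees actually receive $\Omega(n/b^H)$ particles'') cannot help, since there are still only $b^H$ such subtrees. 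Iterating Proposition~\ref{exact} in $K$ blocks sends $K$ particles into each fixed subtree, but the fates of those $K$ particles within one subtree are not independent, and bounding $E_K$ for each subtree from below is precisely the statement you are trying to prove.

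The fix, which is what the paper does, is to let the truncation level grow with $n$: take $h$ so that $n=1+b+\cdots+b^h$, hence $b^h\asymp n$. A single application of Proposition~\ref{exact} (valid for \emph{arbitrary} rotors, so the randomness in levels $\le h$ is irrelevant) delivers exactly one particle to each of the $b^h$ vertices at level $h{+}1$; the subtrees hanging there carry i.i.d.\ copies of $R$, untouched so far, and each such particle escapes iff its subtree has a live path from the base, an event of probability $p=\P(E_1(\TT_b,R)=1)>0$. Thus $E_n\ge \#X$ with $\#X\sim\Bin(b^h,p)$ and $b^h\asymp n$, and the Chernoff bound finishes. (The Abelian property and truncation at a further level $H>h$ are used only to justify this inequality rigorously on the infinite tree.) A minor separate error: your invocation of Lemma~\ref{trunc} has the inequality backwards --- truncation \emph{increases} escapes, $E_n(T^h,S^h,r^h)\ge E_n(T,r)$, so it cannot directly give the lower bound you wrote.
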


The proof of the above result will use a version of the Abelian
property. This requires the concept of simultaneous rotor
walks, which we describe next (see e.g.\ \cite{hlmppw} for more
details).  We will give the property we need for finite trees,
and then use Lemma~\ref{trunc}.  (An alternative approach would
be to formulate a version of the Abelian property for infinite
graphs.)

Let $T$ be a finite tree with root $\rt$, and $S\not\ni \rt$ a non-empty set
of its leaves, as in Section \ref{s:trunc}.  At each vertex there is a rotor,
as usual, and at each vertex there is some non-negative integer number of
particles.  At each step of the process, we choose a vertex $v\not\in
S\cup\{\rt\}$ at which there is at least one particle (if such exists), and
\df{fire} $v$; that is, increment the rotor at $v$, and move one particle
from $v$ in the new rotor direction.

\begin{lemma}[Abelian property]\label{abel}
Let $T$ be a finite tree with set of sinks $S$ and initial
rotor configuration $r$.  If we start with $n$ particles at the
base $\nx$ and perform any legal sequence of firings until all
particles are in $S\cup\{\rt\}$, then the process terminates in
a finite number of steps, and the number of particles in $S$ is
exactly $E_n(T,S,r)$.
\end{lemma}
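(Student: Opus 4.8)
The plan is to prove this by the standard stack/diamond argument for rotor walks, which I will organize in three parts: termination, equality of the final configuration, and the identification with $E_n(T,S,r)$. For termination, I would argue that on a finite tree with a nonempty sink set $S$, no legal firing sequence can continue forever. The key observation is that if some vertex $v\notin S\cup\{\rt\}$ were fired infinitely often, then by the cyclic rotor mechanism it would send particles to its parent infinitely often, and also to each of its children infinitely often; pushing this down towards $S$ (or up towards $\rt$, which absorbs) along any path forces infinitely many firings arbitrarily deep, contradicting finiteness of $T$. (This is essentially the same ``full-rotation'' reasoning already used in the proof of Theorem~\ref{multi}(iii).) Hence every legal sequence of firings halts after finitely many steps with all particles in $S\cup\{\rt\}$.

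For the core Abelian statement, I would show that any two maximal legal firing sequences, started from the same initial data ($n$ particles at $\nx$, rotor configuration $r$), produce the same final particle configuration and the same final rotor configuration, and in fact the same number of firings at each vertex. The standard route is the \emph{strong} Abelian property via a ``local commutativity'' or exchange argument: given two legal sequences $\sigma$ and $\tau$, one shows by induction that any prefix of $\sigma$ can be reordered into a prefix of $\tau$ by swapping adjacent firings at distinct vertices (which commute, since firing $u$ and firing $v\ne u$ affect disjoint rotors and move particles independently) and appealing to a counting lemma: if vertex $v$ is fired $N_v(\sigma)$ times in a maximal sequence, then in fact $N_v(\sigma)$ is forced, because the number of particles that ever arrive at $v$ is determined by the firing counts of $v$'s neighbours, and the cyclic rotor mechanism then determines how many of those are passed onward. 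Running this fixed-point/bootstrap argument from the leaves (and root) inward shows all the $N_v$ agree across sequences, whence the final configuration is sequence-independent.

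Finally, to identify the common number of particles delivered to $S$ with $E_n(T,S,r)$, I would observe that the \emph{sequential} process defining $E_n(T,S,r)$ — run one particle at a time from $\nx$ to absorption in $S\cup\{\rt\}$, then release the next, never returning absorbed particles — is itself a particular legal firing sequence for the system started with $n$ particles at $\nx$: at each step fire the unique vertex currently holding the ``active'' particle, and when that particle reaches $\rt$ it simply stays there (it is never fired again, consistent with $\rt\notin$ the fireable set) while the next of the $n$ particles, conceptually waiting at $\nx$, becomes active. This is a maximal legal sequence, and the number of particles it deposits in $S$ is exactly $E_n(T,S,r)$ by definition. By the Abelian property just established, every maximal legal sequence deposits the same number in $S$, giving the claim. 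The main obstacle is making the local-commutativity/bootstrap argument fully rigorous rather than hand-wavy; I would lean on the existing literature (the property is classical — see \cite{hlmppw}) and present the diamond lemma in the clean form above, with the termination step and the reduction to the sequential process being the parts genuinely needing the tree structure and the sink $S$.
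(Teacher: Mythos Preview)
Your approach is correct and in the same spirit as the paper's---both rest on the classical Abelian property for rotor firings and then identify the sequential one-particle-at-a-time process as a particular legal firing sequence (your identification step matches the paper's exactly). The main difference is one of economy: rather than reproving termination and the diamond/exchange lemma, the paper modifies the graph by orienting edges and adding a single global sink vertex $\Delta$ with a directed edge from every vertex of $S\cup\{\rt\}$ to $\Delta$, and then invokes \cite[Lemma~3.9]{hlmppw} directly, which already asserts that in any legal firing sequence each vertex fires the same number of times. The number of particles reaching $S$ is then simply the total firing count of the vertices in $S$, which is sequence-independent. Your route reproves what the paper cites; the paper's construction of $\Delta$ is what buys the direct appeal to the literature.

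One small imprecision in your termination sketch: the phrase ``forces infinitely many firings arbitrarily deep, contradicting finiteness of $T$'' is not the right contradiction, since $T$ is finite by hypothesis and there is no ``arbitrarily deep''. The clean argument is that only $n$ particles exist, so any vertex of $S\cup\{\rt\}$ (which never fires) can receive at most $n$ particles in total; hence a vertex that fires infinitely often cannot have a neighbour in $S\cup\{\rt\}$, and every neighbour of such a vertex must itself fire infinitely often (else it would accumulate unboundedly many particles). The set of infinitely-firing vertices is therefore closed under taking neighbours, so by connectedness it is empty or all of $V$; since $\rt$ never fires, it is empty.
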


We remark that even stronger statements hold: the number of
times each edge is traversed, and in particular the final
distribution of the particles in $S$ and the final rotor
configuration also do not depend on the sequence of firings.

\begin{proof}
We will adapt the Abelian property as formulated in
\cite{hlmppw}, for which we need to modify the graph slightly.
Assume $\nx\not\in S$ (otherwise the result is trivial).  For
each edge of $T$ not incident to $S\cup\{\rt\}$, replace it
with two directed edges, one in each direction. For each edge
incident to $S\cup\{\rt\}$, replace it with a single edge
directed towards $S\cup\{\rt\}$. Finally, add an extra `global
sink' vertex $\Delta$ and add a directed edge from each vertex
in $S\cup\{\rt\}$ to $\Delta$.  By \cite[Lemma~3.9]{hlmppw},
starting from a given configuration of particles and rotors, in
any legal sequence of firings on this directed graph (stopping
when all particles are at $\Delta$), each vertex fires the same
number of times. In particular, if $n$ particles start at
$\nx$, this means that the number of particles that reach $S$
does not depend on the firing sequence (because it is the
number of firings in $S$). For the firing sequence in which we
move only one particle until it reaches $\Delta$, then move the
next particle, and so on, this number is $E_n(T,S,r)$. (The
initial transitions from $\rt$ to $\nx$ in our usual
formulation of the rotor walk clearly do not matter).
\end{proof}

\begin{proof}[Proof of Corollary \ref{nonzero}]
It suffices to prove the claimed result with $n$ of the form $1+b+\cdots+b^h$
(for $h$ an integer), since this implies the bound (with different constants)
for all $n$.

Since $\E R(v)<b-1$, the branching process from the proof of Theorem
\ref{phase}(ii) is supercritical, and hence with positive probability there
is a live path starting at $\nx$, implying that the first particle escapes.
Thus
$$p:=\P(E_1(\TT_b,R)=1)>0.$$
Let $X$ be the set of vertices $v$ at level $h+1$ for which the subtree $T_v$
based at $v$ satisfies $E_1(T_v,R_v)=1$.  We claim that for
$n:=1+b+\cdots+b^h$ we have
\begin{equation}\label{binomial}
E_n(\TT_b,R)\geq \# X.
\end{equation}
Once this is proved, the result follows from a standard large-deviation bound
(e.g.\ \cite[Theorem 9.5]{durrett2}), since $\# X$ has
$\text{Binomial}(b^h,p)$ distribution.

To prove \eqref{binomial}, by Lemma~\ref{trunc} it suffices to prove that for
all $H>h$,
\begin{equation}\label{binomialH}
E_n(\TT_b^H,S^H,R^H)\geq \# X.
\end{equation}
In the finite tree $\TT_b^H$, start $n$ particles at $\nx$,
stopping them when they enter $S^{h+1}\cup\{\rt\}$.  By
Proposition \ref{exact}, this results in one particle at each
vertex of $S^{h+1}$ and the rest at $\rt$, and the rotors at
distance $h+1$ or greater from $\rt$ are not affected.  Now for
a vertex $v\in X$, continue the rotor walk on $\TT_b^H$ for the
particle located there until it enters $S^H\cup \{\rt\}$; since
in the infinite tree the particle would escape without leaving
the subtree $T_v$, the same applies in the truncated tree.
Therefore, allowing each of the particles at elements of $X$ to
continue in this way results in all of them reaching $S^H$.
Finally we can continue the rotor walks for the remaining
particles (those at the other vertices of $S^{h+1}$) until they
reach $S^H\cup\{\rt\}$.  By Lemma \ref{abel}, \eqref{binomialH}
follows.
\end{proof}

\begin{lemma}[Liminf recursion]\label{liminf}
Fix any tree $T$ and rotor configuration $r$, and let $T_1,\ldots,T_b$ be the
principal branches.  Let
$$\ell:=\liminf_{n\to\infty} \frac{E_n(T,r)}{n};\quad
\ell_i:=\liminf_{n\to\infty} \frac{E_n(T_i,r_i)}{n}, \quad i=1,\ldots ,b;$$
then
$$\ell\geq 1-\frac{1}{1+\sum_{i=1}^b \ell_i}.$$
\end{lemma}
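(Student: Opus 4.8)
The plan is to exploit the explosion formula \eqref{it} directly, turning it into a relation between the partial-sum counting functions $E_n(T,r)$ and $E_n(T_i,r_i)$, and then pass to the liminf. Write $F_i(m):=E_m(T_i,r_i)$ for the escape-count of the $i$th principal branch, and recall from \eqref{it} that $e(T,r)=\ex(w)$ where $w=\sum_{i=1}^{r(\nx)}\theta e(T_i,r_i)+\sum_{i=r(\nx)+1}^{b} e(T_i,r_i)$. The point of the explosion operator is that the escape sequence $\ex(w)$ consists of blocks $1^{w_1}\,0\,1^{w_2}\,0\,\cdots$, so after the $j$th zero the number of $1$s seen so far is $\sum_{k=1}^{j} w_k$, while the total length consumed is $\sum_{k=1}^j w_k + j$. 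Consequently, if we set $W_j:=\sum_{k=1}^j w_k$, then at position $n=W_j+j$ we have exactly $E_n(T,r)=W_j$; and $W_j$ is, up to the harmless shifts $\theta$ (which change each $F_i$ by at most one term), equal to $\sum_{i=1}^b F_i(j)+O(b)$.

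The key steps, in order. First, I would record the exact identity: for $n$ of the form $n=W_j+j$ we have $E_n(T,r)=W_j$, and for general $n$ the value $E_n(T,r)$ is the largest such $W_j$ with $W_j+j\le n$, so it suffices to understand the subsequence $n_j:=W_j+j$. Second, I would bound $W_j$ below: since $\theta e(T_i,r_i)$ has partial sums $\ge E_{j-1}(T_i,r_i)\ge E_j(T_i,r_i)-1$, we get $W_j\ge \sum_{i=1}^b F_i(j)-b$. Third, fix $\veps>0$ and choose $J$ so that $F_i(j)\ge(\ell_i-\veps)j$ for all $j\ge J$ and all $i$ (possible by definition of $\ell_i$, using $\ell_i\ge 0$); then for $j\ge J$, $W_j\ge(\sum_i\ell_i-b\veps)j-b$, hence $n_j=W_j+j\le \big(1+\sum_i\ell_i\big)j + \text{(lower-order)}$ — wait, I need the other direction for the denominator, so instead: $n_j=W_j+j$ and $E_{n_j}(T,r)=W_j$, giving
\[
\frac{E_{n_j}(T,r)}{n_j}=\frac{W_j}{W_j+j}=\frac{1}{1+j/W_j}\ge \frac{1}{1+\frac{j}{(\sum_i\ell_i-b\veps)j-b}}\xrightarrow[j\to\infty]{}\frac{1}{1+\frac{1}{\sum_i\ell_i-b\veps}}.
\]
Fourth, I would check that restricting to the subsequence $(n_j)$ does not inflate the liminf: for general $n$ with $n_j\le n<n_{j+1}$ we have $E_n(T,r)=W_j=E_{n_j}(T,r)$ while $n<n_{j+1}=n_j+w_{j+1}+1$, and $w_{j+1}\le b$ is bounded, so $n/n_j\to 1$ along the relevant indices and the liminf over all $n$ equals the liminf over the subsequence (this uses that the block lengths $w_{j+1}+1$ are $O(1)$, which holds because each $e(T_i,r_i)$ is a $\{0,1\}$-sequence so $w_{j+1}\le b$). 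Finally, let $\veps\downarrow 0$ to conclude $\ell\ge 1-\big(1+\sum_i\ell_i\big)^{-1}$, rewriting $1/(1+1/x)=x/(1+x)=1-1/(1+x)$.

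The main obstacle I anticipate is the bookkeeping around the shift operators $\theta$ and the boundedness of block lengths: one must make sure that replacing some $e(T_i,r_i)$ by $\theta e(T_i,r_i)$ only perturbs $W_j$ by an additive $O(b)$ and never by a multiplicative factor, and that the blocks $1^{w_{j+1}}0$ separating consecutive "checkpoints" $n_j$ have length at most $b+1$, so that the subsequence $(n_j)$ is "dense enough" that its liminf agrees with the full liminf. Both facts are elementary but need to be stated carefully; a subtlety is the degenerate case $\sum_i\ell_i=0$, where the asserted bound reads $\ell\ge 0$ and is trivially true, so one may assume $\sum_i\ell_i>0$ when manipulating the fractions above. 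Note also that the bound is consistent with \eqref{p0}: if each $\ell_i$ equalled the random-walk return-avoidance parameter $1-p_i$ for $T_i$, the recursion would reproduce $\ell=\trans(T)$ exactly, which is the way this lemma will be used downstream.
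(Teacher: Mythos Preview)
Your approach is correct and essentially identical to the paper's: both read off from the explosion formula that $E_{W_j+j}=W_j$ with $W_j=\sum_i E_j(T_i,r_i)+O(b)$, rewrite the ratio as $1-1/(1+W_j/j)$, and use that consecutive checkpoints satisfy $n_{j+1}-n_j\le b+1$ (the paper phrases this as ``$E_n$ is monotone in $n$'') to pass from the subsequence to the full liminf. One small slip to fix: for $n_j\le n<n_{j+1}$ you have $E_n\ge W_j$, not $E_n=W_j$ (the intervening terms of $e(T,r)$ are $1$s, not $0$s), but this inequality is the direction you need and your bounded-block-length argument goes through unchanged.
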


\begin{proof}
Write $E_n=E_n(T,r)$ and $E^i_n=E_n(T_i,r_i)$.  An application of the
explosion formula, Theorem \ref{explosion}(i), shows that $E_{m+n}=m$ for
some $m=m_n=c_n+\sum_{i=1}^b E^i_n$ with $c_n\in[0,b]$ (the error term $c_n$
comes from the shifts $\theta$ in the explosion formula).  Hence
\[
\frac{E_{m+n}}{m+n} = 1 - \frac{1}{1+m/n}
= 1 - \frac{1}{1 + c_n/n + \sum_i (E^i_n/n)}.
\]
Since $\liminf_n\sum_i (E^i_n/n)\geq\sum_i \ell_i$, and $E_n$ is monotone in
$n$, the result follows.
\end{proof}

\begin{proof}[Proof of Theorem \ref{phase}(i)]
Let $L=\liminf_{n\to\infty} E_n(\TT_b,R)/n$.  By Corollary~\ref{nonzero} and
the Borel-Cantelli lemma we have $L\geq \delta$ a.s., where $\delta>0$.  On
the other hand, by Lemma~\ref{liminf}, if for some constant $a>0$ we have
$L\geq a$ a.s., then $L\geq 1-1/(1+ba)$ a.s.  Applying this repeatedly gives
$L\geq 1-b^{-1}=\trans$ a.s., since this is the fixed point of the iteration
$a\mapsto 1-1/(1+ba)$.  Now use Theorem \ref{oded}.
\end{proof}

\section{Rotors about to point away from the root}
\label{s:b-1}

In this section we prove Theorem \ref{b-1}. We say that a rotor
configuration $r$ is \df{$b$-free} if for every $v\in V_\rt$ we
have $r(v)\neq b(v)$; i.e.\ every rotor will send its next
particle away from the root.  We note in particular that this
implies $b(v)\geq 1$ for every $v$, and hence every subtree
$T_v$ is infinite. (In any case, trees in which some vertices
have no children are of little interest to us, as remarked in
the introduction).

\begin{lemma}\label{half}
For any $b$-free rotor configuration $r$ on a tree $T$, we have for all
$n\geq 1$,
$$E_n(T,r)\geq \trans n/2.$$
\end{lemma}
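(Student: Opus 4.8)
The plan is to make two reductions and then run an induction through the explosion formula. First, since $b$-freeness of $r$ says exactly that $r(v)\le b(v)-1$ for every $v$ (in particular $b(v)\ge 1$, so every subtree is infinite), Theorem~\ref{multi}(i) gives $E_n(T,r)\ge E_n(T,s)$, where $s$ is the configuration $s(v):=b(v)-1$. This $s$ is again $b$-free, and its restriction to any subtree $T_v$ is of the same form; so it suffices to prove $E_n(T,s)\ge\trans(T)\,n/2$ for all $n\ge 1$. Second, by Lemma~\ref{trunc} it is enough to prove the analogous statement for a finite tree with sinks, $E_n(T,S,s)\ge\trans(T,S)\,n/2$, and for such a tree we can induct from the sinks upward.

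The engine of the induction is a comparison with the $\down$-configuration, which by Theorem~\ref{multi}(ii) escapes at rate at least $\trans$. Applying the explosion formula (Theorem~\ref{explosion}) to a subtree $T_v$ with base $v$ and principal branches $T_{v^{(1)}},\dots,T_{v^{(b(v))}}$ gives $e(T_v,s_v)=\ex\bigl(\theta\sum_{i<b(v)}e(T_{v^{(i)}},s_{v^{(i)}})+e(T_{v^{(b(v))}},s_{v^{(b(v))}})\bigr)$, with all but the last branch shifted, whereas $e(T_v,\down)=\ex\bigl(\sum_i e(T_{v^{(i)}},\down)\bigr)$ with no shifts. Writing each escape count as a partial sum of the sequence inside $\ex$ and feeding in the inductive hypothesis on the branches — together with the monotonicity $s\ge\down$ (Theorem~\ref{multi}(i)) and the subadditivity $E_{a+b}(T_i,\down)\le E_a(T_i,\down)+E_b(T_i,\down)$, which is a restatement of Proposition~\ref{self-maj} — I would deduce a bound of the rough shape $E_{2n-1}(T_v,s_v)\gtrsim E_n(T_v,\down)\ge\trans(T_v)\,n$. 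Passing to even indices by monotonicity of $E_n$ in $n$, and using that $E_1=1$ (under any $b$-free configuration the first particle is sent away from the root forever, hence escapes), this yields $E_n(T_v,s_v)\ge\trans(T_v)\,n/2$.

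The main obstacle is the bookkeeping hidden in the phrase ``rough shape''. The shift $\theta$ applied to $b(v)-1$ of the branches, and the fact that one $\ex$-block may be only partially consumed at step $n$, each introduce additive discrepancies of order $b(v)$ when one lines up the block indices of the $s$- and $\down$-processes; so the inductive statement has to be chosen with exactly the right parity and offset (rather than, say, $E_{2n}(T_v,s_v)\ge E_n(T_v,\down)$, for which the length constraint slightly overshoots) in order for the recursion to close for \emph{every} $n$ and not merely asymptotically. The factor $\tfrac12$ in the target is precisely what supplies the slack to absorb these $O(b(v))$ errors; correspondingly, the genuinely delicate range is small $n$, where $\trans(T)\,n$ is itself comparable to the error terms, and there one should fall back on direct arguments (e.g.\ that the first particle always escapes). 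An alternative route, which meets the same bookkeeping, is to target $E_{2n-1}(T_v,s_v)\ge\trans(T_v)\,n$ directly, using the identity $\trans(T_v)=p_v\sum_i\trans(T_{v^{(i)}})$ with $p_v=(1+\sum_i\trans(T_{v^{(i)}}))^{-1}$ (equation~\eqref{p0}) in place of the comparison with $\down$.
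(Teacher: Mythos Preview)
Your proposal is a plan rather than a proof: you never actually state, let alone verify, an inductive hypothesis that makes the structural recursion close. The sentence ``I would deduce a bound of the rough shape $E_{2n-1}(T_v,s_v)\gtrsim E_n(T_v,\down)$'' is where the entire content of the lemma should live, and you then devote a paragraph to explaining why you have not made it precise. Until a specific statement is written down and checked through one application of the explosion formula (with the shifts on $b(v)-1$ branches), nothing has been established. The reduction to $s(v)=b(v)-1$ is also a red herring: it is the \emph{worst} $b$-free configuration, so passing to it gains nothing, and the argument the paper gives works uniformly over all $b$-free $r$.

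The paper's proof is both different and much simpler: it inducts on $n$, not on the tree, and does not touch the explosion formula. The only observation needed is that if $n$ particles are run from $\rt$ and $n_i$ of them enter the principal branch $T_i$, then $n_i\ge n-E_n(T)-1$; this is because the rotor at $\nx$ sends the particle to each child at least once between consecutive sends to $\rt$, and there are $n-E_n(T)$ of the latter. Since $n_i<n$, the inductive hypothesis (for all trees and all $b$-free configurations) applied to each branch gives $E_{n_i}(T_i)\ge\trans_i n_i/2$; summing and using $\sum_i\trans_i=\trans/(1-\trans)$ from~\eqref{p0} yields
\[
E_n(T)=\sum_i E_{n_i}(T_i)\;\ge\;\frac{\trans}{2(1-\trans)}\bigl(n-E_n(T)-1\bigr),
\]
which rearranges to $E_n(T)\ge\frac{\trans}{2-\trans}(n-1)$. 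A line of algebra shows this exceeds $\trans n/2$ whenever $\trans n>2$; otherwise $E_n\ge 1\ge\trans n/2$ since the first particle escapes. No shift bookkeeping, no comparison with $\down$, no parity issues.
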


\begin{proof}
We proceed by induction on $n$.  It is easy to see that the first particle
escapes, so the inequality holds for $n=1$.  Now take $n>1$ and assume that
it holds for all smaller $n$, for all $T$ and $b$-free $r$.

Let $T_1,\ldots,T_b$ be the principal branches of $T$, and write
$\trans_i=\trans(T_i)$ and $\trans=\trans(T)$. Let $n_i$ be the total number
of particles that enter $T_i$ when $n$ rotor walks are run from $\rt$.  Since
$n-E_n(T)$ is the number that return to $\rt$, we have
$$n_i\geq n-E_n(T)-1.$$
Also, since $n>1$ we have $n_i<n$, so by the inductive hypothesis,
$$E_{n_i}(T_i) \geq \trans_i n_i/2.$$

Note that $\sum_{i=1}^b \trans_i=\trans/(1-\trans)$ (similarly to
\eqref{p0}). Hence, using the above inequalities, we have
$$E_n(T) =\sum_i E_{n_i}(T_i) \geq \tfrac12 \sum_i \trans_i n_i
\geq \frac{\trans}{2(1-\trans)} \bigl(n-E_n(T)-1\bigr),$$
and solving gives
$$E_n(T)\geq \frac{\trans}{2-\trans} \, (n-1).$$
If $\trans n >2$, some algebra shows that the last quantity is greater than
$\trans n /2$, so the required inequality holds.  Otherwise, $E_n(T)\geq 1
\geq \trans n/2$.
\end{proof}

\begin{proof}[Proof of Theorem \ref{b-1}]
By Proposition \ref{lip}, we may assume that $r$ is $b$-free.  By Theorem
\ref{oded}, it suffices to prove that $\liminf_{n\to\infty} E_n/n \geq
\trans$.

For any vertex $v\in V_\rt$, let $\trans_v := \trans(T_v)$, which is the
probability that a simple random walk starting at $v$ never visits $v$'s
parent. Also for any $v\in V$, let $h(v)$ be the probability that a random
walk started at $v$ ever hits $\rt$. Note that $h$ is harmonic except at the
root, where $h(\rt)=1$, and that
\[
h(v) = \prod_{x\in(\rt,v]} (1-\trans_x).
\]
where $(\rt,v]$ denotes the path from
$\rt$ to $v$ (including $v$ but not $\rt$).  Also let
$$\ell_v:=\liminf_{n\to\infty} \frac{E_n(T_v,r_v)}n.$$
and define by analogy with $h$ the function
\[
f(v) := \prod_{x\in(\rt,v]} (1-\ell_x).
\]
We need to prove $\ell_\nx\geq \trans_\nx$, which is equivalent to
$f(\nx)\leq h(\nx)$.  (In fact the argument will imply $f\equiv h$).

We first show that $f$ is sub-harmonic except at $\rt$.  By
Lemma~\ref{liminf} we have for any $v\in V_\rt$,
\begin{align*}
\ell_v \geq 1-\frac{1}{1+L},  \quad\text{where}\quad
L = \sum_{i=1}^{b(v)} \ell_{v^{(i)}}.
\end{align*}
(The $\trans_v$'s satisfy the corresponding {\em equality}, which is
equivalent to the harmonicity of $h$). Therefore, using the definition of
$f$,
\[
  f(v^{(0)}) = \frac{f(v)}{1-\ell_v}
  \geq f(v)(1 + L ),
\]
and so
\begin{align*}
\sum_{i=0}^{b(v)} f(v^{(i)}) &\geq
f(v)(1+L) + \sum_{i=1}^{b(v)} f(v)(1-\ell_{v^{(i)}}) \\
&= f(v)(1+b(v)),
\end{align*}
which is the claimed sub-harmonicity.

Let $\partial T$ be the set of ends of the tree $T$.  Since $h$ and $f$ are
decreasing along any end, we extend their definitions to $\partial T$ via
limits. We claim that
\begin{equation}\label{implies}
h(\eta)=0 \implies f(\eta)=0, \quad \eta\in\partial T.
\end{equation}
To prove this, suppose $h(\eta)=0$. Thus $\prod_{v\in\eta} (1-\trans_v)=0$,
and so $\sum_{v\in\eta} \trans_v = \infty$.  Since by Lemma~\ref{half} we
have $\ell_v\geq \trans_v/2$ for all $v$, this implies $\sum_{v\in\eta}
\ell_v = \infty$, and thus $f(\eta)=0$.

Now let $(X_t)$ be the simple symmetric walk on $T$, started at $\nx$, and
stopped at the first visit (if any) to $\rt$.  Note that $(X_t)$ almost
surely either hits $\rt$ or visits exactly one end $\eta$ infinitely often --
in that case we say the walk escapes to $\eta$. Let $\mu$ be the associated
harmonic measure on $\{\rt\}\cup\partial T$, so $\mu(A)$ equals the
probability that the
walk escapes to some end in $A$. Now $(h(X_t))$ is a bounded martingale,
therefore it converges, and $\E h(X_0) = \E \lim_{t\to\infty} h(X_t)$; that
is,
\begin{align*}
h(\nx)&=
\mu(\{\rt\}) \cdot h(\rt) + \int_{\partial T} h \;d\mu \\
&= h(\nx)\cdot 1 +\int_{\partial T} h \;d\mu.
\end{align*}
Thus, the last integral is $0$, so $\mu\{\eta\in\partial T: h(\eta)> 0\}=0$.
(This fact also follows from \cite[Proposition~8]{benjamini-peres}, for
example). By \eqref{implies}, this implies $\mu\{\eta\in\partial T: f(\eta)>
0\}=0$. On the other hand, $(f(X_t))$ is a bounded sub-martingale, so we
obtain similarly
\begin{align*}
f(\nx)&\leq
 h(\nx)\cdot 1+ \int_{\partial T} f \;d\mu\\
 &=h(\nx),
\end{align*}
completing the proof.
\end{proof}

\section{Maximum depth}
\label{s:depth}

\begin{proof}[Proof of Theorem~\ref{T:depth}(i)]
By Theorem~\ref{phase}, if $\E R(v)\geq b-1$ then all particles return to the
root a.s.  Let $V_n$ be the set of non-root vertices that have ever been
visited when the $n$th particle returns to the root.  Since a rotor always
points to the last direction in which a particle left, at this time all
vertices in $V_n$ have their rotors pointing towards the root.  It follows
that particle $n+1$ will visit all vertices in $V_n$, as well as all their
children.  Let $\Delta V_n$ be the set of children of $V_n$ that are not
themselves elements of $V_n$, and note that $\#\Delta V_n =(b-1)\#V_n+1$.
Each time particle $n+1$ enters a new element of $\Delta V_n$, it encounters
a previously untouched subtree whose rotors are distributed as in the
original tree. So before returning to $V_n$, it visits a number of new
vertices that is equal in law to $\#V_1$. Thus
\begin{equation}\label{explored}
\# V_{n+1} = \# V_n + \sum_{i=1}^{(b-1)\#V_n+1} Z_i,
\end{equation}
where $(Z_i)$ are i.i.d.\ with the same law as $\# V_1$ and
independent of $V_n$.  (Thus, $(\# V_n)_{n\geq 1}$ is `almost'
a Galton-Watson process).

On the other hand, if a vertex $v$ belongs to $V_1$, then so do all those of
its children $v^{(k)}$ that satisfy $k>R(v)$ (and no others).  Thus the graph
induced by $V_1$ is a Galton-Watson tree with offspring distribution that of
$b-R(v)$.

We write $c_i,C_i$ for (small, large) constants in $(0,\infty)$
depending only on $b$ and the distribution of $R(v)$.   Since
$\E R(v)=b-1$, the latter Galton-Watson tree is critical with
bounded offspring distribution, therefore (see e.g.\ \cite[\S
2.1: Theorem 2 and Lemma 4]{kolchin}) for all $N\geq 1$,
\begin{gather}
c_1/N \leq \P(\#V_1 >N^2)\leq C_1/N;\label{GWvolume}\\
c_2/N \leq \P(D_1 >N)\leq C_2/N\label{GWdepth}.
\end{gather}

By \eqref{GWvolume}, for $(Z_i)$ i.i.d.\ with the same law as $\# V_1$,
$$\P\Bigl(\sum_1^N Z_i > N^2\Bigr)\geq \P\bigl(\max_1^N Z_i > N^2\bigr)
\geq (1-c_1/N)^N\xrightarrow{N\to\infty} e^{-c_1},$$ therefore
$$\P\Bigl(\sum_1^N Z_i > N^2\Bigr)\geq c_3, \quad\forall N\geq 1.$$
(For large enough $N$ this follows from the previous line,
while the constant can be chosen so that it holds for small $N$
because $Z_1$ has unbounded support). Since $(b-1)N +1\geq N$,
it follows from \eqref{explored} that
$$\P\Bigl(\# V_{n+1} > (\# V_n)^2\mid V_1,\ldots,V_n\Bigr)>c_3.$$
We have in any case $\#V_{n+1}> \#V_n$ and $\#V_2\geq 2$, so by the law of
large numbers,
\begin{equation}\label{volume}
\P\bigl(\#V_n \geq 2^{2^{(c_4n)}}\bigr)\to 1 \quad\text{as }n\to\infty.
\end{equation}

We now use a similar argument to convert \eqref{volume} to a lower bound on
the depth. Recall that $D_n$ is the maximum level of all vertices in $V_n$,
and note that
$$D_{n+1}\geq \max_{i=1}^{(b-1)\#V_n+1} Y_i,$$
where $(Y_i)$ are the depths of the subtrees added to $V_n$ to form
$V_{n+1}$, which are i.i.d.\ with the same law as $D_1$. By \eqref{GWdepth},
$\P(\max_i^N Y_i >\sqrt N)\to 1$ as $N\to\infty$, so $\P\bigl(D_{n+1} \geq
\sqrt{\# V_n}\bigr)\to 1$ as $n\to\infty$, which together with \eqref{volume}
gives $\P\bigl(D_n \geq e^{e^{cn}}\bigr)\to 1$.

Turning to the upper bound, by \eqref{GWvolume} we have for all $t>0$ and
$N\geq 1$,
$$\P\biggl(\frac{\log\sum_1^N Z_i}{\log N}>t\biggr)
= \P\Bigl(\sum_1^N Z_i > N^t\Bigr) \leq N\P(Z_1> N^{t-1})\leq C_1
N^{-\frac{t-3}2}.$$ Hence (by the above for $N\geq 2$ and \eqref{GWvolume} for $N=1$),
$$\E\biggl(\frac{\log\sum_1^N Z_i}{\log N}\biggr)\leq C_3.$$
Thus from \eqref{explored}, $\E \log \# V_n \leq (C_4)^n$, so
Markov's inequality gives $\P\bigl(\#V_n > e^{e^{Cn}}\bigr)\to
0$ as $n\to\infty$. Finally, $D_n$ satisfies the same bound,
since $D_n\leq \#V_n$.
\end{proof}

\begin{proof}[Proof of Theorem~\ref{T:depth}(ii)]
The lower bound $n\leq D_n$ clearly holds regardless of the
rotor configuration, so we turn to the upper bound.  As in part
(i), a.s.\ no particle escapes to infinity, and each particle
visits all vertices visited by its predecessor.

Let $x$ be a vertex of $\TT_b$ at level $h+1$, and watch the rotor walk only
while it is on the path $\pi=\pi(x) $ from $\rt$ to $x$. Since whenever the
particle leaves $\pi$ it always returns via the same vertex, its behaviour on
$\pi$ is identical to that of a rotor walk on a path, as determined by the
initial rotor positions on $\pi$.  If $\pi$ has vertices
$\rt=x_0,x_1,\ldots,x_h,x_{h+1}=x$, let $k_i=k_i(x)$ be such that
$x_{i+1}=(x_i)^{(k_i)}$ for $i=1,\ldots,h$.  Then the $n$th particle reaches
$x$ if and only if
$$\sum_{i=1}^h \ind[ R(x_i)\geq k_i(x)] < n.$$

Now let $X$ be a uniformly randomly chosen vertex at level
$h+1$, independent of the initial rotor configuration.  Then
$(k_i(X))_{i=1}^h$ are i.i.d.\ uniformly random on
$\{1,\ldots,b\}$.  If $K$ is uniform on $\{1,\ldots,b\}$ and
independent of $R(v)$ then $\P(R(v)\geq K)=\E R(v)/b=:p$, say.
By the above, it follows that the probability particle $n$
reaches $X$ is the probability that a Binomial$(h,p)$ random
variable $B$ is less than $n$.

Let $Z$ be the number of vertices at level $h+1$ visited by particle $n$.
Then
\begin{equation}\label{randompath}
\P(D_n>h)=\P(Z>0)\leq \E Z = b^h\,\P(B<n).
\end{equation}
On the other hand, for $\alpha\in(0,1)$, by a standard Chernoff bound (i.e.\
$\P(B<\alpha h)\leq s^{-\alpha h}\E s^B$ with
$s=(p^{-1}-1)/(\alpha^{-1}-1)$),
$$\P(B<\alpha h)\leq
\biggl[\Bigl(\frac p\alpha\Bigr)^\alpha \Bigl(\frac {1-p}{1-\alpha}\Bigr)^{1-\alpha}
\biggr]^h.
$$
We have
$$
b \, \Bigl(\frac p\alpha\Bigr)^\alpha \Bigl(\frac {1-p}{1-\alpha}\Bigr)^{1-\alpha}
\xrightarrow{\alpha\to 0} b(1-p)=b-\E R(v) <1.
$$
Therefore we may choose $\alpha>0$ (depending only on $b$ and
$\E R(v)$) so that the left side is less than $1$.  Then from
\eqref{randompath}, $\P(D_{\lfloor\alpha h\rfloor}>h)\to 0$ as
$h\to\infty$, hence $\P(D_n>Cn)\to 0$ as $n\to\infty$, as
required.
\end{proof}

\section*{Open Problems}

\begin{mylist}
\item Characterize the set of possible escape sequences $e(T,\down)$, where
    $T$ varies over all trees (with root of degree $1$ and all degrees finite),
    and $\down$ is the initial configuration in which all rotors point
    towards the root.
\item If $e=e(T,r)$ is an escape sequence for a tree $T$ (with
    some initial rotor configuration $r$),
    must every sequence $e'$ satisfying $e'\leq e$ also be an escape sequence
    for $T$?
\end{mylist}

\bibliographystyle{habbrv}
\bibliography{bib}
\newpage

\end{document}